\documentclass[12pt]{amsart}
\usepackage{amsthm,amsmath,amssymb,amscd,graphics,enumerate, stmaryrd,xspace,verbatim, epic, eepic,url,fullpage}

\usepackage{enumitem}

\usepackage[usenames,dvipsnames]{color}

\usepackage[colorlinks=true]{hyperref}
\usepackage{cleveref}

\usepackage[active]{srcltx}
\usepackage[all]{xypic}
\SelectTips{cm}{}
\usepackage{quiver}

{
   \newtheorem{theorem}{Theorem}[section]
      \newtheorem*{theorem*}{Theorem}
   \newtheorem{proposition}[theorem]{Proposition}

   \newtheorem{lemma}[theorem]{Lemma}

   \newtheorem{corollary}[theorem]{Corollary}
   
   \newtheorem*{conjecture*}{Conjecture}

}
{\theoremstyle{definition}
          \newtheorem*{exercise*}{Exercise}
   
   \newtheorem{example}[theorem]{Example}
 
   \newtheorem*{example*}{Example}
   
   \newtheorem{definition}[theorem]{Definition}

   \newtheorem*{definition*}{Definition}
   
   \newtheorem{remark}[theorem]{Remark}

}
\newcommand{\CC}{{\mathbb{C}}}
\newcommand{\QQ}{{\mathbb{Q}}}

\newcommand{\PP}{{\mathbb{P}}}
\newcommand{\ZZ}{{\mathbb{Z}}}

\renewcommand{\AA}{{\mathbb{A}}}

\def\Bl{{\rm Bl}}

\newcommand{\cO}{{\mathcal O}}

\def\<{\langle}
\def\>{\rangle}

\newcommand{\Spec}{\operatorname{Spec}}

\newcommand{\Proj}{\operatorname{Proj}}

\newcommand{\Aut}{{\operatorname{Aut}}}

\newcommand{\codim}{\operatorname{codim}}

\newcommand{\red}{{\operatorname{red}}}

\def\:{{\colon}}
\def\.{{,\dots,}}
\def\dim{{\rm dim}}

\def\Ex{\mathrm{Ex}}

\newcommand{\double}{\genfrac..{0pt}1
{\raise -1pt\hbox{$\scriptstyle\longrightarrow$}}{\raise 3pt\hbox
{$\scriptstyle\longrightarrow$}}}

\newcommand{\setmin}{\smallsetminus}

\renewcommand{\setminus}{\smallsetminus}

\renewcommand\H{\operatorname{H}}

\def\int{{\rm int}}

\def\tototi{\mathbin{\mathop{\otimes}\limits^{\raise-1pt\hbox
{$\scriptscriptstyle {\rm L}$}}}}

\def\indlim{\mathop{\vrule width0pt height7pt depth
4pt\smash{\lim\limits_{\raise 1pt\hbox to 14.5pt
{\rightarrowfill}}}}}
\def\projlim{\mathop{\vrule width0pt height7pt depth
4pt\smash{\lim\limits_{\raise 1pt\hbox to 14.5pt
{\leftarrowfill}}}}}

\newcommand\displaceamount{3pt}

\newcommand{\doubledown}{\ar@<\displaceamount>[d]\ar@<-\displaceamount>[d]}

\newcommand{\doubleup}{\ar@<\displaceamount>[u]\ar@<-\displaceamount>[u]}

\newcommand{\doubleright}{\ar@<\displaceamount>[r]\ar@<-\displaceamount>[r]}

\newcommand{\res}{{\operatorname{res}}}

\def\Xbar{\overline{X}}

\def\Supp{\mathrm{Supp\ \!}}
\def\Diff{\mathrm{Diff}}
\def\Bir{\mathrm{Bir}}

\def\phibar{\overline{\phi}}
\def\psibar{\overline{\psi}}
\def\Ybar{\overline{Y}}
\def\Zbar{\overline{Z}}

\def\Bs{\mathrm{Bs}}

\definecolor{AL}{RGB}{10,200,100}

\numberwithin{equation}{section}

\begin{document}

\title{Finite generation of relative log canonical algebras of semi-dlt pairs}

\author{Alberto Landi}
\address{Department of Mathematics, Box 1917, Brown University,
Providence, RI, 02912, U.S.A}
\email{alberto\_landi@brown.edu}


\maketitle

\begin{abstract}
    We prove that the relative log canonical algebra of a semi-dlt pair $(X,\Delta_X)$ projective over a scheme $T$ is finitely generated, equivalently, that $(X,\Delta_X)$ admits a relative stable model, provided that the normalization admits a log canonical model over $T$, that $(X,\Delta_X)$ admits a stable model over an open dense subscheme $T^0\subset T$ whose exceptional locus does not contain any stratum of the conductor, and that the log centers contained in the conductor have images meeting $T^0$. Our main contribution is a direct proof that avoids Koll\'ar's gluing theory entirely.

    As a consequence, we recover demi-normal versions of results of Hacon--Xu and Birkar.
    Furthermore, we derive the existence of certain MMP steps for slc pairs, recovering results of Ambro and Koll\'ar. Finally, these results lay the groundwork for a streamlined proof of the properness of the Koll\'ar--Shepherd-Barron--Alexeev moduli space of stable pairs, to be completed in forthcoming work.
\end{abstract}

\tableofcontents

\section{Introduction}

Given a projective \emph{log canonical (lc) pair} $(X,\Delta_X)$ and a projective morphism $f\colon X\rightarrow T$, finite generation of the relative \emph{log canonical algebra} $R_X:=R(X/T,K_X+\Delta_X)$ is a fundamental problem in higher-dimensional algebraic geometry. The minimal model program (MMP) provides powerful methods for establishing finite generation in many important cases, in particular for smooth varieties~\cite{BCHM,HM2,Cascini-Lazic_NewOutlookI}.
When $R_X$ is finitely generated and $K_X+\Delta_X$ is $f$-big, the induced birational map $X\dashrightarrow\Proj R_X$ yields a \emph{log canonical model over $T$}. The target carries a natural structure of an lc pair for which the log canonical divisor is ample over $T$; when $T$ is a point, such a pair is called \emph{stable}. Stable pairs form the basic objects parametrized by the \emph{Koll\'ar--Shepherd-Barron--Alexeev (KSBA) moduli space}~\cite{KSB88,Alexeev_MgnW_surfaces,Kollar-moduli}, which plays a central role in the moduli theory of higher-dimensional varieties.

As is well known in the case of curves, obtaining a compact moduli space requires allowing degenerations to reducible or self-intersecting varieties. This leads naturally to the notion of \emph{semi-log-canonical} (\emph{slc}) pairs, introduced in~\cite{KSB88} for surfaces. The underlying variety of an slc pair is \emph{demi-normal}, that is, it satisfies Serre's $S_2$ condition and has at worst ordinary double points in codimension 1.
Given an slc pair $(X,\Delta_X)$, its normalization pair $(\Xbar,D_{\Xbar}+\Delta_{\Xbar})$ is log canonical, where $D_{\Xbar}$ denotes the \emph{conductor} divisor. The slc pair $(X,\Delta_X)$ is then recovered by gluing the normalization pair along a natural involution $\mu$ on $(D_{\Xbar},\Diff_{D_{\Xbar}}(\Delta_{\Xbar}))$. The KSBA moduli space compactifies the moduli of stable lc pairs by parametrizing stable slc pairs.

A systematic treatment of slc pairs is substantially more difficult, as MMP steps cannot always be executed~\cite[Examples 4, 5]{Ambro-Kollar-slc-MMP} and finite generation of the log canonical ring can fail even for normal crossing surfaces without boundary~\cite{Kollar_examples_SNC}. Nevertheless, substantial progress has been made in the study of slc pairs since their introduction~\cite{FujinoAbundanceSLC,Kollar-singularities,Gongyo_abundance,fujino_theorems_slc,FG_fin_Brepr,Fujita_semi-terminal_modifications,Hacon_Xu_finBrepr_slc_abundance,Ambro-Kollar-slc-MMP,Hashizume_sdlt_models}.

\emph{Koll\'ar's gluing theory} provides a powerful framework for overcoming these problems. In particular, it gives sufficient conditions under which a triple $(\Xbar,D_{\Xbar}+\Delta_{\Xbar},\mu)$ arises as the normalization data of an slc pair; we refer to~\cite[\S4.5, \S4.6, \S5, \S9]{Kollar-singularities} for an extensive treatment. The gluing machinery has played an important role in many results about the birational geometry of slc pairs.
In this paper, we show that, in several important cases, the same finite-generation problem can be treated directly, bypassing this machinery entirely.

With a view towards an application to the KSBA moduli space of stable pairs, we establish a sufficient condition for finite generation of the log canonical algebra of semi-dlt pairs (Definition~\ref{def: dlt/sdlt/sklt}) over a base $T$. We refer to Definition~\ref{def: stable model slc pairs} and Lemma~\ref{lem: redundant def stable model} for the notion of a \emph{good stable model}, which serves as a demi-normal analogue of the log canonical model and is characterized as the Proj of the log canonical algebra in Proposition~\ref{prop: Y=proj RX good stable model} and Theorem~\ref{thm: projRX is stable model}.

\begin{theorem}\label{thm: general version}
    Let $(X,\Delta_X)$ be a semi-dlt pair, $T$ a semi-normal scheme that is separated and essentially of finite type over $\CC$, and $f\colon X\rightarrow T$ a projective morphism. Let $T^0\subset T$ be an open dense subscheme, and set $X^0=T^0\times_{T}X$, $\Delta_{X^0}=\Delta_X|_{X^0}$. Assume that $X^0$ intersects
    all log centers of $(X,\Delta_X)$ lying on the conductor $D_X$; in particular, $X^0$ intersects all strata of $D_X$.
    Suppose that $(X^0,\Delta_{X^0})$ admits a good stable model
    \[
    \begin{tikzcd}
        \phi^{0,c}\colon (X^0,\Delta_{X^0})\arrow[r,dashed] & (X^{0,c},\Delta_{X^{0,c}})
    \end{tikzcd}
    \]
    whose exceptional locus does not contain any stratum of $D_{X^0}$, and that the normalization pair $(\Xbar,D_{\Xbar}+\Delta_{\Xbar})$ admits a log canonical model over $T$.
    
    Then, $R_X:=R(X/T,K_X+\Delta_X)$ is finitely generated, and $\Proj R_X$ is the good stable model of $(X,\Delta_X)$ over $T$.
\end{theorem}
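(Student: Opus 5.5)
The approach is to produce the candidate model directly from the normalization and then show it descends to $X$. Let $\nu\colon\Xbar\to X$ be the normalization and $R_{\Xbar}:=R(\Xbar/T,K_{\Xbar}+D_{\Xbar}+\Delta_{\Xbar})$. Since $\nu^*(K_X+\Delta_X)=K_{\Xbar}+D_{\Xbar}+\Delta_{\Xbar}$, pulling back sections gives an inclusion $R_X\hookrightarrow R_{\Xbar}$. Its image consists of the sections whose restrictions to $D_{\Xbar}^n$ are invariant under the involution $\mu$, i.e.\ which agree on the two preimages of each double point. By hypothesis $R_{\Xbar}$ is finitely generated, with $\Proj R_{\Xbar}=(\Xbar^c,D^c+\Delta^c)$ the lc model over $T$. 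I will prove that $R_X$ is finitely generated by realizing it as the ring of sections of the ample divisor on a demi-normal quotient $X^c$ of $\Xbar^c$. Finite generation then follows from Proposition~\ref{prop: Y=proj RX good stable model} and Theorem~\ref{thm: projRX is stable model}, which identify $\Proj R_X$ with the good stable model.

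\textbf{Step 1: transport $\mu$ to $\Xbar^c$.} First I check that no stratum of $D_{\Xbar}$ is contracted by $\Xbar\dashrightarrow\Xbar^c$. Over $T^0$ this follows from the assumption on $\phi^{0,c}$: the normalization of $X^{0,c}$ is the lc model of $\Xbar^0$ by uniqueness of lc models. Every stratum, and more generally every log center lying on $D_X$, meets $X^0$, so no such center is contracted over all of $T$. Hence $D^c$ is the birational transform of $D_{\Xbar}$, and its strata correspond to those of $D_{\Xbar}$. The involution $\mu$ on $(D_{\Xbar}^n,\Diff)$ induces a birational involution of $(D^{c,n},\Diff)$. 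Both $\Diff$ pairs are lc with ample log canonical class: adjunction takes the ample $K_{\Xbar^c}+D^c+\Delta^c$ to the $\Diff$ pair on $D^{c,n}$, and that pair is the lc model of $(D_{\Xbar}^n,\Diff)$ over $T$. Uniqueness of lc models therefore upgrades the involution to a regular involution $\mu^c$ of $D^{c,n}$ preserving $\Diff$. Over $T^0$, $\mu^c$ coincides with the gluing involution of $X^{0,c}$.

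\textbf{Step 2: the quotient.} Define $X^c$ as the geometric quotient of $\Xbar^c$ by the finite equivalence relation generated by $\mu^c$. This step is the main obstacle, since I want to avoid Kollár's gluing theorem. My plan is to construct $X^c$ concretely as $\Proj$ of the subring $R'\subset R_{\Xbar}$ consisting of $\mu^c$-invariant sections, so that no gluing theorem is needed. Then I show $R'=R_X$ by comparing sections. A section on $\Xbar$ is regular on $\Xbar^c$ because the map is an lc model, hence $(K+\Delta)$-nonpositive. The $\mu$-invariance on $D_{\Xbar}^n$ is equivalent to $\mu^c$-invariance on $D^{c,n}$, because no stratum of $D$ is contracted, so the restriction maps commute with the birational identifications of the normalized conductors.

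For finite generation of $R'$, I use that $R_{\Xbar}$ is finite over $R'$. This should follow from the existence of the good stable model over $T^0$, together with an argument that the equivalence relation is finite. Finiteness of the relation comes from the condition on log centers: each $\mu^c$-orbit of strata is controlled over $T^0$, and the lc centers of the $\Diff$ pairs meet $T^0$ as well. Finiteness of the relation means the sections generate over a finitely generated invariant subring. I would try to derive this via Noether-type invariant arguments applied to the normalized conductor ring $R(D^{c,n})$, which is finitely generated, together with the exact sequence $0\to R_{\Xbar}(-D)\to R_{\Xbar}\to R(D^{c,n})$. This expresses $R'$ as a fiber product of finitely generated algebras over $R(D^{c,n})^{\mu}$, and I expect it to be the technical heart of the proof.

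\textbf{Step 3: identify the model.} Finally I verify that $X^c=\Proj R_X$ is demi-normal and that $(X^c,\Delta_{X^c})$ is slc with ample log canonical class over $T$. The pair is slc because its normalization $(\Xbar^c,D^c+\Delta^c)$ is lc and the gluing involution preserves $\Diff$. Then Lemma~\ref{lem: redundant def stable model} shows that $X\dashrightarrow X^c$ is the good stable model.
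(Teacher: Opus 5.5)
\textbf{There is a genuine gap in Step~2,} which you yourself flag as the heart of the proof; the mechanism you sketch cannot close it. Writing $R'=\res_n^{-1}\bigl(R^{\mu}_{D^n_{\Xbar^c}}\bigr)$ as a fiber product of finitely generated algebras gives nothing. By Lemma~\ref{lem: log canonical ring slc pair}, $R_X=R_{\Xbar}\times_{R_{D^n_{\Xbar}}}R^{\mu}_{D^n_{\Xbar}}$ always holds, yet Koll\'ar's normal crossing surfaces \cite{Kollar_examples_SNC} have $R_{\Xbar}$ finitely generated and $R_X$ not.

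Noether's theorem gives finiteness of $R^{\mu}_{D^n}\subset R_{D^n}$, but what must be controlled is the image of $\res_n$. That image is neither all of $R_{D^n}$ nor $\mu$-stable. It lies in the ring of the demi-normal conductor $D_{\Xbar^c}$, which is cut out by the self-gluing of $D_{\Xbar^c}$ along codimension-two strata, and $\mu$ does not preserve that condition (already at a triple point $xyz=0$). If the image were $\mu$-stable, you could take $\beta$ with $\res_n\beta=\mu(\res_n\alpha)$, and then $\alpha$ would be a root of $x^2-(\alpha+\beta)x+\alpha\beta\in R'[x]$. Without that, integrality does not transfer from the conductor to $R_{\Xbar}$. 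Finally, ``finiteness of the equivalence relation generated by $\mu^c$'' on $\Xbar^c$ is exactly what Koll\'ar's gluing theorem provides, so your plan has not actually avoided it.

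\textbf{The paper supplies two ingredients you are missing.} In the semi-klt case it passes to a $\QQ$-factorial minimal model $\Xbar^m$, where $(\Xbar^m,\Delta_{\Xbar^m})$ is klt. Kawamata--Viehweg (Lemma~\ref{lem: surjectivity restriction minimal model}) then makes $R^{(\ell)}_{\Xbar^m}\to R^{(\ell)}_{D_{\Xbar^m}}$ surjective onto the ring of the demi-normal conductor. The remaining problem, finiteness of $R_{D_{\Xbar^m}}\cap R^H_{D^n_{\Xbar^m}}\subset R_{D^n_{\Xbar^m}}$ for $H=\langle G,\mu\rangle$, is the same statement one dimension lower. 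This is why Theorem~\ref{thm: equivariant sklt} is stated equivariantly and proved by induction, finished by a symmetric-function argument on $\ker(\res)$.

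When $\lfloor\Delta_X\rfloor\neq0$ this vanishing is unavailable. The paper then reduces to projective $T$, perturbs to a semi-klt boundary, and obtains $X^c_+$ independent of $\epsilon$. It then descends semi-ampleness of $K+\Delta$ from the normalization via Fujino--Gongyo. Your proposal has no substitute for either step.

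\textbf{A smaller point in Step~1.} $(D^n_{\Xbar^c},\Diff)$ is in general not the lc model of $(D^n_{\Xbar},\Diff)$. Restricting $p^*(K_{\Xbar}+D_{\Xbar}+\Delta_{\Xbar})=q^*(K_{\Xbar^c}+D_{\Xbar^c}+\Delta_{\Xbar^c})+E$ to the conductor gives an effective $E_Z$ that need not be exceptional, so only $R_{D^n_{\Xbar^c}}\subset R_{D^n_{\Xbar}}$ holds. The paper instead gets regularity of $\mu^c$ over $T\setminus T^0$ from \cite[Theorem 11.40]{Kollar-moduli}, which is where the hypothesis on log centers is really used.
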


In the case $\lfloor\Delta_X\rfloor=0$, Theorem~\ref{thm: general version} admits an equivariant strengthening with respect to finite groups of B-birational self-maps of the normalization pair.
Recall that if $(Y,\Delta_Y)$ is an lc pair and $f\colon Y\rightarrow T$ a proper morphism, we denote by $\Bir(Y/T,\Delta_Y)$ the group of birational automorphisms of $Y$ over $T$ that preserve the boundary $\Delta_Y$; see Definition~\ref{def: B-birational maps}. There is an induced action of $\Bir(Y/T,\Delta_Y)$ on (a Veronese subalgebra of) the log canonical $\cO_T$-algebra of $(Y,\Delta_Y)$; see Lemma~\ref{lem: Bir acts on log canonical} and Definition~\ref{def: invariants under Bir}.

\begin{theorem}\label{thm: equivariant sklt}
    With the notation of Theorem~\ref{thm: general version}, assume further that $\lfloor\Delta_X\rfloor=0$.
    Then, for every finite subgroup $G\subset\Bir(\Xbar/T,D_{\Xbar}+\Delta_{\Xbar})$, the inclusion
    \[
    \begin{tikzcd}
        R(X/T,K_X+\Delta_X)\cap R(\Xbar/T,K_{\Xbar}+D_{\Xbar}+\Delta_{\Xbar})^G\arrow[r,hookrightarrow] & R(\Xbar/T,K_{\Xbar}+D_{\Xbar}+\Delta_{\Xbar})
    \end{tikzcd}
    \]
    is a finite extension. In particular, $R_X:=R(X/T,K_X+\Delta_X)$ is finitely generated, and $\Proj R_X$ is the good stable model of $(X,\Delta_X)$ over $T$.
\end{theorem}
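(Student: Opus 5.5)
The plan is to reduce Theorem~\ref{thm: equivariant sklt} to a finiteness statement for rings of invariants, and then get finite generation of $R_X$ from the standard lemma that an algebra is finitely generated if a finitely generated algebra is finite over it. Write $R_{\Xbar}:=R(\Xbar/T,K_{\Xbar}+D_{\Xbar}+\Delta_{\Xbar})$. After passing to a Veronese subalgebra, which does not affect finite generation or Proj, we may assume that $G$ acts on $R_{\Xbar}$ as in Lemma~\ref{lem: Bir acts on log canonical}. Since the normalization pair has a log canonical model over $T$, $R_{\Xbar}$ is a finitely generated $\cO_T$-algebra. The algebra $R_X$ embeds into $R_{\Xbar}$ by pulling back sections to the normalization: sections of $m(K_X+\Delta_X)$ are exactly the sections of $m(K_{\Xbar}+D_{\Xbar}+\Delta_{\Xbar})$ whose residues (Poincaré residue in the sense of the paper) along $D_{\Xbar}$ are compatible with the involution $\mu$ on the normalization of the conductor.

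The key step is to produce a $G$-stable subalgebra $S\subset R_X\cap R_{\Xbar}^G$ over which $R_{\Xbar}$ is finite. Over $T^0$ the good stable model exists. So $R_{X^0}$ is finitely generated and its Proj is $X^{0,c}$. Since the exceptional locus of $\phi^{0,c}$ contains no stratum of $D_{X^0}$, the normalization of $X^{0,c}$ is the lc model of $(\Xbar^0,D+\Delta)$, and the conductor is not contracted. Consequently $R_{\Xbar^0}$ is finite over $R_{X^0}$: a power of every section on $\Xbar^0$ becomes a norm-type section satisfying the gluing condition. I would use this integrality generically. For a homogeneous $s\in R_{\Xbar}$, form the polynomial $\prod_{g\in G}(t-g\cdot s)$. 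Its coefficients lie in $R_{\Xbar}^G$, and they need to be adjusted to satisfy the gluing condition along $D_{\Xbar}$.

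Here $\lfloor\Delta_X\rfloor=0$ is used. Every log center of $(\Xbar,D_{\Xbar}+\Delta_{\Xbar})$ contained in $D_{\Xbar}$ is a stratum of $D_{\Xbar}$, and by hypothesis it meets $\Xbar^0$. On each stratum the restricted pair is klt away from lower strata. The gluing condition along $D_{\Xbar}$ is therefore a condition on a divisor whose components all dominate their images meeting $T^0$. Such a condition on residues can be checked over $T^0$, using $S_2$ and the fact that sections of the restricted algebras on $D_{\Xbar}$ are determined by their restriction over $T^0$. This uses torsion-freeness over $\cO_T$ and semi-normality of $T$, which lets sections extend across codimension-one loci of $T$ in a controlled way. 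Combining this with integrality over $T^0$, I would show that suitable powers $s^N$, and the $G$-symmetric functions of such powers, satisfy the gluing condition over all of $T$. Hence they lie in $R_X\cap R_{\Xbar}^G$, and $R_{\Xbar}$ is integral over this subalgebra. Since $R_{\Xbar}$ is finitely generated, integrality gives finiteness.

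Finally, $R_{\Xbar}$ is finitely generated and finite over $A:=R_X\cap R_{\Xbar}^G$. By the Artin--Tate lemma, $A$ is finitely generated. Also $R_X$ is an $A$-submodule of the finite $A$-module $R_{\Xbar}$, and $A$ is noetherian, so $R_X$ is a finite $A$-module and hence finitely generated. The identification of $\Proj R_X$ with the good stable model then follows from Theorem~\ref{thm: projRX is stable model}. Alternatively, finite generation together with Theorem~\ref{thm: general version} gives it directly.

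The main obstacle I expect is the extension step: showing that the gluing (residue-compatibility) condition on the symmetrized powers holds over the whole of $T$, and not only over $T^0$. The delicate point is the fibres over $T\setminus T^0$, where strata of $D_{\Xbar}$ could a priori acquire new components. I would first try to deduce this from the hypothesis that all log centers on the conductor meet $X^0$, via adjunction and induction on the dimension of strata.
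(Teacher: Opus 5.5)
Your outer framework is fine: integrality of $R_{\Xbar}$ over $A=R_X\cap R_{\Xbar}^G$, then Artin--Tate and noetherianity, then Theorem~\ref{thm: projRX is stable model}. The gap is the central integrality step, which you do not supply, and the mechanism you sketch for it fails.

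\emph{Your mechanism fails even over $T^0$.} Powers $s^N$, and $G$-symmetric functions of them, do not satisfy the gluing condition in general, even with $G=1$. Take $X=Z\cup_C Z$ and $s=(s_1,0)$ with $\res s_1\neq 0$. Then for every $N$, $\res_n(s^N)$ is nonzero on one branch of $D_{\Xbar}^n$ and zero on its $\mu$-image.
\begin{itemize}
\item Such an $s$ is integral over $R_X$ only through a norm-type coefficient like $(s_1,\pm s_1)$, obtained by symmetrizing along $\mu$.
\item In this example that symmetrization happens to come from swapping the components. In general, however, $\mu$ lives only on $D_{\Xbar}^n$ and is not an element of $\Bir(\Xbar/T,D_{\Xbar}+\Delta_{\Xbar})$.
\end{itemize}

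\emph{Checking over $T^0$ does not help.} Your remark that the gluing condition can be tested over $T^0$ is correct, but only for sections already regular over $T$. Integrality over $T^0$ gives coefficients in $R_{X^0}$ that may have poles along $T\setminus T^0$. Clearing denominators only shows that $h^k s$ is integral, for $h$ vanishing on $T\setminus T^0$. Integrality of $R_{\Xbar}$ over $R_X$ is not formal (compare Koll\'ar's normal crossing examples). So the control over $T\setminus T^0$, which you list as the main obstacle, is exactly the content of the theorem, and it is left unproved.

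\emph{How the paper supplies it.}
\begin{itemize}
\item \textbf{Surjective restriction.} Pass to a $\QQ$-factorial good minimal model $\Xbar^m$. There $(\Xbar^m,\Delta_{\Xbar^m})$ is klt because $\lfloor\Delta_X\rfloor=0$. Kawamata--Viehweg vanishing then makes $\res\colon R_{\Xbar^m}^{(\ell)}\to R_{D_{\Xbar^m}}^{(\ell)}$ surjective. This also shows that $D_{\Xbar^m}\to D_{\Xbar^c}$ is a crepant good stable model of the semi-klt conductor.
\item \textbf{Extending $\mu$.} The gluing involution of $X^{0,c}$ over $T^0$ is extended to an involution $\mu$ of the stable pair $(D^n_{\Xbar^c},\Diff_{D^n_{\Xbar^c}}(\Delta_{\Xbar^c}))$ over all of $T$ by \cite[Theorem 11.40]{Kollar-moduli}. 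This is where the hypothesis on log centers enters.
\item \textbf{Finite group.} Hence $H=\langle G,\mu\rangle$ is finite, being a group of automorphisms of a stable pair, and it acts B-birationally on $D^n_{\Xbar^m}$.
\item \textbf{Induction.} Induction on dimension makes $R_{D_{\Xbar^m}}$ finite over $R_{D_{\Xbar^m}}\cap R^H_{D^n_{\Xbar^m}}$.
\item \textbf{Lifting.} Lifting integral equations through the surjective $\res$ makes $R_{\Xbar^m}$ integral over $\res^{-1}(R_{D_{\Xbar^m}}\cap R^H_{D^n_{\Xbar^m}})\subset R_X$. The error term lies in $I=\ker\res$, where the gluing condition is vacuous.
\item \textbf{$G$-invariance.} Only at this point is your product $\prod_{g\in G}(t-g\cdot\alpha)$ used, and only for $\alpha\in I$. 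It is combined with the surjectivity of $\res^G$, which comes from linear reductivity.
\end{itemize}

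\emph{Smaller points.} Log centers on $D_{\Xbar}$ (those with $\mathrm{mld}<1$) need not be strata; only lc centers are. Invoking Theorem~\ref{thm: general version} for the $\Proj$ statement would be circular, since its proof relies on this theorem.
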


\subsection{Applications}\label{subsec: intro applications}

We discuss several consequences of Theorem~\ref{thm: general version}. In particular, we recover demi-normal version of results of Hacon--Xu~\cite[Theorem 1.1, Theorem 1.6]{Hacon_Xu_dslt} and Birkar~\cite[Theorem 1.1(3)]{Birkar_existenceflips}, with applications to the properness of the KSBA moduli space and to the existence of MMP steps for slc pairs, respectively.

\subsubsection{Properness of the KSBA moduli space of stable pairs}

It is known that the KSBA moduli space is proper.
The proof of properness for irreducible components whose general member is normal uses resolution of singularities to establish a \emph{semi-stable reduction} result~\cite{Abramovich-deJong,Karu-thesis,AK}, followed by an application of~\cite[Theorem 1.1]{Hacon_Xu_dslt} to obtain a \emph{stable family}; see~\cite[\S2]{Kollar-moduli} and~\cite{Karu-boundedness-stable-varieties}. When the general member is merely demi-normal, the existing proof passes first to the normalization, applies the recipe above, and finally glues the resulting family together using Koll\'ar's gluing machinery; see for instance~\cite[Theorem 2.51]{Kollar-moduli} or~\cite[\S7]{Hacon_Xu_dslt}.

Applying this strategy directly at the demi-normal level presents two difficulties. First, semi-stable reduction is not available in the non-normal setting.
This obstacle will be addressed in forthcoming joint work with Roberta Pagliaro, Simon Stojkovic, and Hao (Nick) Sun, leveraging recent resolution techniques involving DM-stacks~\cite{BDS-B-nc,Wlodarczyk-nc,AT-umbrellas}.

Second, passing from a semi-stable family to a stable family requires finite generation of the log canonical algebra of the total space, for which no direct proof avoiding Koll\'ar's gluing theory was previously available.
The following demi-normal analogue, for log big pairs, of~\cite[Theorem 1.1]{Hacon_Xu_dslt} provides such a proof and follows immediately from Theorem~\ref{thm: general version}.

\begin{corollary}\label{cor: main intro}
    Let $(X,\Delta_X)$ be a semi-dlt pair, $T$ a semi-normal scheme that is separated and essentially of finite type over $\CC$, and $f\colon X\rightarrow T$ a projective morphism. Let $T^0\subset T$ be an open dense subscheme, and set $X^0=T^0\times_{T}X$, $\Delta_{X^0}=\Delta_X|_{X^0}$. Assume that $X^0$ intersects all log canonical centers of $(X,\Delta_X)$, and all log centers lying on $D_X$; in particular, $X^0$ intersects all strata of the conductor $D_X$.
    Suppose that $(X^0,\Delta_{X^0})$ admits a good stable model
    \[
    \begin{tikzcd}
        \phi^{0,c}\colon (X^0,\Delta_{X^0})\arrow[r,dashed] & (X^{0,c},\Delta_{X^{0,c}})
    \end{tikzcd}
    \]
    over $T^0$ whose exceptional locus does not contain any stratum of $D_{X_0}$.
    
    Then, $R_X:=R(X/T,K_X+\Delta_X)$ is finitely generated, and $\Proj R_X$ is the good stable model of $(X,\Delta_X)$ over $T$.
\end{corollary}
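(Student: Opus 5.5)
The corollary follows from Theorem~\ref{thm: general version} once we check its one extra hypothesis: that the normalization pair $(\Xbar,D_{\Xbar}+\Delta_{\Xbar})$ admits a log canonical model over $T$. The other hypotheses of the theorem are included verbatim in the corollary. Log centers on $D_X$ meet $X^0$, and $(X^0,\Delta_{X^0})$ has a good stable model over $T^0$ whose exceptional locus contains no stratum of $D_{X^0}$. So the whole task is to produce the relative lc model of the normalization, and this is where the extra assumption that $X^0$ meets \emph{all} lc centers of $(X,\Delta_X)$ is used.

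\textbf{Step 1: properties of the normalization pair.} Since $(X,\Delta_X)$ is semi-dlt, the normalization pair $(\Xbar,D_{\Xbar}+\Delta_{\Xbar})$ is dlt, and it is projective over $T$ via $\Xbar\to X\to T$. Its lc centers are of two kinds.
\begin{enumerate}[label=(\roman*)]
\item Lc centers that are strata of $D_{\Xbar}$, or lie inside $D_{\Xbar}$. These map to log centers of $(X,\Delta_X)$ contained in $D_X$.
\item Lc centers not contained in $D_{\Xbar}$. These map birationally onto lc centers of $(X,\Delta_X)$.
\end{enumerate}
By hypothesis, every such image meets $X^0$. Hence every lc center of $(\Xbar,D_{\Xbar}+\Delta_{\Xbar})$ meets $\Xbar^0:=\Xbar\times_T T^0$.

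\textbf{Step 2: the normalization has a good model over $T^0$.} Normalizing the good stable model $X^{0,c}$ and pulling back the birational map $\phi^{0,c}$ gives a birational contraction $\Xbar^0\dashrightarrow \overline{X^{0,c}}$. This uses the condition on the exceptional locus: no component of $D_{\Xbar^0}$ is contracted, so the conductor maps to the conductor. On the target, $K+D+\Delta$ is ample over $T^0$, because it is the pullback of the ample divisor $K_{X^{0,c}}+\Delta_{X^{0,c}}$ under a finite map. Using Lemma~\ref{lem: redundant def stable model}, this is the lc model of $(\Xbar^0,D_{\Xbar^0}+\Delta_{\Xbar^0})$ over $T^0$. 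In particular $K_{\Xbar}+D_{\Xbar}+\Delta_{\Xbar}$ is big over $T$, and the pair has a good minimal model over $T^0$. (A semi-normal $T$ may be disconnected; we work on each connected component of $\Xbar$ separately.)

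\textbf{Step 3: extend from $T^0$ to $T$ (the main obstacle).} Here I would invoke Hacon--Xu~\cite[Theorem 1.1]{Hacon_Xu_dslt} in the form later generalized by Birkar~\cite{Birkar_existenceflips}. For a dlt pair, projective over $T$, all of whose lc centers dominate or meet the open set $T^0$, and which has a good minimal model over $T^0$, this gives a good minimal model over $T$, hence finite generation of $R(\Xbar/T,K_{\Xbar}+D_{\Xbar}+\Delta_{\Xbar})$. Step 1 supplies the lc-center hypothesis.

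Two technical points need care.
\begin{itemize}
\item These results are stated for a base that is a variety, or essentially of finite type. A semi-normal, possibly reducible $T$ has to be handled by passing to the components of $T$ dominated by components of $\Xbar$; it may be cleanest to replace $T$ by the Stein factorization of $\Xbar\to T$.
\item Hacon--Xu assume boundary coefficients in $[0,1]$ and the pair $\QQ$-factorial dlt. If needed, replace the pair by a $\QQ$-factorial dlt modification that is an isomorphism over the generic points of lc centers. This preserves both the lc model and the lc-center condition.
\end{itemize}

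Taking $\Proj$ of the resulting finitely generated algebra gives the log canonical model over $T$. Theorem~\ref{thm: general version} then yields that $R_X$ is finitely generated and that $\Proj R_X$ is the good stable model of $(X,\Delta_X)$ over $T$.
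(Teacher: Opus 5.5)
Your proposal is correct and follows the paper's proof. Both reduce to Theorem~\ref{thm: general version} by constructing the log canonical model of $(\Xbar,D_{\Xbar}+\Delta_{\Xbar})$ over $T$ via \cite[Theorem 1.1]{Hacon_Xu_dslt}, using that every lc center of the normalization meets $\Xbar^0$.

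The one difference is that the paper first reduces to projective $T$ by the argument of Lemma~\ref{lem: reduction to projective case}, which also handles the normality and finite-type issues you flag. Over a projective base, Hacon--Xu's Proposition 3.1 can be replaced by \cite[Theorem 1.4]{FG_fin_Brepr}. Applying Hacon--Xu directly over a general $T$, as you do, is valid, but it implicitly brings back Koll\'ar's gluing theory, which the paper is designed to avoid (see Remark~\ref{rmk: reduction projective case for Hacon-Xu result}).
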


For the intended application to the KSBA moduli space, we will use the following special case of Corollary~\ref{cor: main intro}.
When the base is a regular curve $T$, an slc pair $(X,\Delta_X)$ with a flat proper morphism $X\rightarrow T$ is an \emph{slc family} if $(X,\Delta_X+X_t)$ is an slc pair for all $t\in T$, see~\cite[Definition/Theorem 2.3]{Kollar-moduli}. We refer to \cite{Kollar-moduli} for the general notion of slc family.

\begin{corollary}\label{cor: simple normal crossing case}
    Let $(Y,\Delta_Y)$ be a demi-normal pair, flat and projective over a regular 1-dimensional scheme $T$ that is separated and essentially of finite type over $\CC$, and suppose that every irreducible component of $Y$ is normal in codimension 1. Suppose that there is a dense open subset $T^0\subset T$ such that $(Y^0,\Delta_{Y^0})=(Y,\Delta_Y)|_{T^0}$ is a stable slc family over $T^0$.
    
    Let $\phi\colon X\rightarrow Y$ be a birational morphism over $T$ that does not contract any stratum of the conductor $D_X$. Set
    \[
        \Delta_X=\phi_*^{-1}\Delta_Y+\sum a_iE_i,
    \]
    where $E_i$ are the irreducible $\phi$-exceptional divisors dominating $T$ and satisfying
    \[
        a_i=a(E|_{Y^0},Y^0,\Delta_{Y^0})>0.
    \]
    Suppose further that for every closed point $t\in T$ the pair $(X,\Delta_X+X_t)$ is slc.
    
    Then, $(X,\Delta_X)$ admits a stable model $(X^c,\Delta_{X^c})$ over $T$ that is also a stable family, and it is isomorphic to $(Y^0,\Delta_{Y^0})$ over $T^0$.
\end{corollary}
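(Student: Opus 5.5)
We deduce the statement from Corollary~\ref{cor: main intro}, applied to $(X,\Delta_X)\to T$ with the given $T^0$; afterwards we check that $\Proj R_X$ is a stable family extending $(Y^0,\Delta_{Y^0})$. The proof has four steps.

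\textbf{Step 1: $(X,\Delta_X)$ is semi-dlt (or can be replaced by a semi-dlt model with the same algebra).} Since $(X,\Delta_X+X_t)$ is slc for every closed $t$, inversion of adjunction (applied on the normalization) shows $(X,\Delta_X)$ is slc. Every log canonical center either dominates $T$ or lies in some fiber $X_t$. Here I would use the hypothesis that components of $Y$ are normal in codimension $1$, together with $\phi$ not contracting strata of $D_X$. If semi-dlt-ness is not immediate from this, I would pass to a semi-dlt modification isomorphic over the snc locus. Such a modification leaves $R_X$ unchanged, because the extracted divisors have discrepancy $-1$ while the boundary coefficient is $1$.

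\textbf{Step 2: verify the hypotheses of Corollary~\ref{cor: main intro}.*}
\begin{enumerate}
\item $T$ is regular, hence semi-normal. Since $T$ is $1$-dimensional, every log center not dominating $T$ lies over a closed point.
\item Because $(X,\Delta_X+X_t)$ is lc, no log center (lc center, or center of $D_X$) is contained in a fiber $X_t$. Otherwise that center would be a log center of $(X,\Delta_X+X_t)$ with strictly negative discrepancy, contradicting lc. Hence every log center dominates $T$ and meets $X^0$.
\item Over $T^0$, the morphism $\phi^0\colon X^0\to Y^0$ is a good stable model of $(X^0,\Delta_{X^0})$. Indeed, $K_{Y^0}+\Delta_{Y^0}$ is ample, and the choice of coefficients gives $K_{X^0}+\Delta_{X^0}=\phi^{0*}(K_{Y^0}+\Delta_{Y^0})+\sum_i c_iE_i$ with $c_i\ge0$. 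When $a_i>0$, the coefficient of $E_i$ in $\Delta_X$ is $a_i$, which is at least the coefficient $-a$ forced by discrepancy, so the difference is effective and exceptional. The conditions of Lemma~\ref{lem: redundant def stable model} then hold.
\item The exceptional locus of $\phi^0$ contains no stratum of $D_{X^0}$, by hypothesis.
\end{enumerate}
The corollary therefore gives that $R_X$ is finitely generated and that $X^c:=\Proj R_X$ is the good stable model over $T$. Since good stable models are unique, $X^c|_{T^0}\cong Y^0$.

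\textbf{Step 3: $X^c\to T$ is a stable family.}
\begin{enumerate}
\item $K_{X^c}+\Delta_{X^c}$ is ample over $T$ by construction.
\item Flatness holds because $X^c$ is reduced and every component dominates the regular curve $T$. No component lies in a fiber, since $X\dashrightarrow X^c$ is a birational contraction and every component of $X$ dominates $T$.
\item For each $t$, the pair $(X^c,\Delta_{X^c}+X^c_t)$ is slc. Since $X^c_t$ is Cartier, $X\dashrightarrow X^c$ is a $(K+\Delta+X_t)$-non-positive contraction: pulling back $X^c_t$ gives $X_t$ because $X_t$ is the full fiber. Discrepancies therefore only increase from $(X,\Delta_X+X_t)$, which is slc.
\item To transport these discrepancy statements across the non-normal locus, compare on normalizations via Proposition~\ref{prop: Y=proj RX good stable model}. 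This needs the codimension-$1$ points of the conductor to be preserved, which is exactly where the non-contraction of strata of $D_X$ is used.
\end{enumerate}

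\textbf{Main obstacle.} I expect Step 3 to be the hardest part: showing that $X^c_t$ is reduced, that $X^c$ is $S_2$, and that its fibers are demi-normal. My approach is to use the slc property of $(X^c,\Delta_{X^c}+X^c_t)$ first. By the known criterion (cf.~\cite[Definition/Theorem 2.3]{Kollar-moduli}), this implies the fiber is demi-normal and the family is locally stable. That reduces the difficulty to the discrepancy comparison on normalizations in Step 3.
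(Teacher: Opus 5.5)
Your route is the paper's. The paper also deduces from slc-ness of $(X,\Delta_X+X_t)$ that every log center meets $X^0$. It cites \cite[Proposition 2.15]{Kollar-moduli} for this, and your estimate $a(E,X,\Delta_X+X_t)\le a(E,X,\Delta_X)-1$ proves the same thing directly. It then treats $\phi^0$ as a good stable model not contracting strata, applies Corollary~\ref{cor: main intro}, and concludes with \cite[Definition/Theorem 2.3]{Kollar-moduli} once $(X^c,\Delta_{X^c}+X^c_t)$ is known to be slc. For that last point the paper observes that $X_t$ is pulled back from $T$, so Theorem~\ref{thm: projRX is stable model} identifies the good stable model of $(X,\Delta_X+X_t)$ with $(X^c,\Delta_{X^c}+X^c_t)$. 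Your direct comparison of discrepancies is the same argument (Lemma~\ref{lem: redundant def stable model}(d)).

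\textbf{The real gap: Step 1.} This gap is shared with the paper, which only writes ``Note that $(X,\Delta_X)$ is semi-dlt''.
\begin{itemize}
\item Semi-dlt-ness does not follow from the stated hypotheses. Take $Y=Y_0\times T$ with $Y_0$ a stable surface having a simple elliptic singularity, $X=Y$ and $\phi=\mathrm{id}$. This satisfies every hypothesis, yet $(X,0)$ is not dlt. So the inputs you propose for Step 1 (normality in codimension $1$ of components, non-contraction of strata) cannot yield it. It has to be assumed, e.g.\ $X$ semi-snc, as the corollary's label suggests.
\item Your fallback through a crepant semi-dlt modification $X'\to X$ is not automatic either. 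Strata of $D_{X'}$ lie over lc centers of $(X,\Delta_X)$ inside $D_X$ that need not be strata of $D_X$. Corollary~\ref{cor: main intro} requires that $X'^0\to Y^0$ not contract them, and you would have to check this.
\end{itemize}

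\textbf{A smaller point: Step 2(3).} The difference $K_{X^0}+\Delta_{X^0}-\phi^{0*}(K_{Y^0}+\Delta_{Y^0})$ has coefficient $a(E_j,Y^0,\Delta_{Y^0})$ on every exceptional $E_j$ not in $\Delta_X$. So your claim $c_i\ge 0$ needs every exceptional divisor of negative discrepancy to appear in $\Delta_X$ with coefficient at least $-a(E_j,Y^0,\Delta_{Y^0})$. This is evidently the intended reading of the statement. With the literal sign, $\phi^0$ need not be a stable model: extracting a divisor of discrepancy $-\tfrac12$ over a cone singularity already breaks it.
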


\subsubsection{MMP for slc pairs}

Another application of Theorem~\ref{thm: general version} is that it allows us to run a special version of MMP for slc pairs, in a manner similar to~\cite{Ambro-Kollar-slc-MMP}, but without relying on Koll\'ar's gluing machinery. Since the main results are already present in loc.\! cit.\!, we avoid restating them here, and refer to \S\ref{sec:slc-MMP}.

In this subsection, we limit ourselves to stating the demi-normal version of a special case of the result~\cite[Theorem 1.1(3)]{Birkar_existenceflips} of Birkar and~\cite[Theorem 1.6]{Hacon_Xu_dslt} of Hacon and Xu.

\begin{corollary}\label{cor: analog Birkar}
    Let $f\colon X\rightarrow T$ be a projective generically-finite morphism between projective demi-normal schemes over $\CC$. Let $\Delta_X$ and $\Delta_X'$ be effective $\QQ$-divisors on $X$ such that $(X,\Delta_X+\Delta_X')$ is slc and $(X,\Delta_X)$ is semi-dlt. Suppose that $K_X+\Delta_X+\Delta_X'\sim_{\QQ,T}0$, and that $f$ is finite at the generic point of every log center lying on $D_X$. Then, $R_X:=R(X/T,K_X+\Delta_X)$ is an $\cO_T$-algebra of finite type, and $X\dashrightarrow\Proj R_X$ is the stable model of $(X,\Delta_X)$ over $T$.
\end{corollary}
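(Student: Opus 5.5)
The plan is to deduce the statement directly from Theorem~\ref{thm: general version}, with $T$ replaced by the given projective demi-normal base, so it suffices to check each hypothesis there.

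\emph{The base and the open set $T^0$.} A projective demi-normal scheme is $S_2$ with at worst nodes in codimension $1$, hence semi-normal; it is also separated and of finite type over $\CC$. Let $T^0\subset T$ be the open locus over which $f$ is finite. This locus is open because the set of points with positive-dimensional fibre is closed by upper semicontinuity and $f$ is proper. It is dense because $f$ is generically finite. If $f$ is not surjective, I first replace $T$ by the union of images of components, which is harmless for relative algebras.

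\emph{The stable model over $T^0$.} Over $T^0$ the morphism $f$ is finite, so every $\QQ$-Cartier divisor is $f$-ample. In particular $K_{X^0}+\Delta_{X^0}$ is ample over $T^0$, and $(X^0,\Delta_{X^0})$ is its own good stable model via the identity (Definition~\ref{def: stable model slc pairs}, Lemma~\ref{lem: redundant def stable model}). Its exceptional locus is empty, so it contains no stratum of $D_{X^0}$. The assumption that $f$ is finite at the generic point of every log center lying on $D_X$ says precisely that $X^0$ meets all such log centers.

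\emph{The log canonical model of the normalization.} Let $\nu\colon\Xbar\to X$ be the normalization. Then $(\Xbar,D_{\Xbar}+\Delta_{\Xbar}+\Delta'_{\Xbar})$ is lc, $(\Xbar,D_{\Xbar}+\Delta_{\Xbar})$ is dlt, and
\[
K_{\Xbar}+D_{\Xbar}+\Delta_{\Xbar}+\Delta'_{\Xbar}\sim_{\QQ,T}0 .
\]
On each component $\Xbar_i$, consider the Stein factorization of $\Xbar_i\to T$ followed by the normalization of the image. Call this normal base $T_i$; the map $\Xbar_i\to T_i$ is birational. The results of Birkar \cite[Theorem 1.1(3)]{Birkar_existenceflips} and Hacon--Xu \cite[Theorem 1.6]{Hacon_Xu_dslt} then give finite generation of $R(\Xbar_i/T_i,K+D+\Delta)$. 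Since $T_i\to T$ is finite, this transfers to finite generation over $T$. Because $\Xbar_i\to T$ is generically finite, $K_{\Xbar}+D_{\Xbar}+\Delta_{\Xbar}$ is big over $T$, so $\Proj$ of this algebra is the lc model over $T$.

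With all hypotheses verified, Theorem~\ref{thm: general version} gives that $R_X$ is finitely generated and that $X\dashrightarrow\Proj R_X$ is the good stable model. I expect the main obstacle to be the normalization step: matching the precise hypotheses of the cited Birkar/Hacon--Xu results, which require a normal quasi-projective base and lc plus $\QQ$-trivial data. The components of $\Xbar$ map to possibly non-normal subvarieties of $T$, so this step relies on the Stein factorization/normalization reduction above together with the fact that finite generation is preserved under finite base change of the base.
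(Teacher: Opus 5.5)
\textbf{Gap: the choice of $T^0$.} Your overall strategy is the paper's. You apply Theorem~\ref{thm: general version} with the identity as the good stable model over a dense open of the base, and you get the log canonical model of the normalization from \cite[Theorem 1.1(3)]{Birkar_existenceflips} or \cite[Theorem 1.6]{Hacon_Xu_dslt}. The problem is the sentence claiming that the hypothesis ``says precisely'' that $X^0$ meets all log centers on $D_X$.

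With $T^0$ the locus over which $f$ is finite, $W\cap f^{-1}(T^0)\neq\emptyset$ holds iff $f(\eta_W)\in T^0$. That requires the \emph{whole} fibre $f^{-1}(f(\eta_W))$ to be finite. Finiteness of $f$ at $\eta_W$ is a condition at $\eta_W$ (quasi-finiteness there). Another sheet of $f$ over $f(W)$ may have positive-dimensional fibres. For example, $f$ might factor through a finite cover $T'\to T$ such that $X\to T'$ contracts something over one preimage of $f(\eta_W)$, while $\eta_W$ lies over another preimage. Then $f(\eta_W)\notin T^0$, and the log-center hypothesis of Theorem~\ref{thm: general version} is not verified. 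Your step is only correct if you read the assumption as ``$f$ is finite over a neighbourhood of $f(\eta_W)$''.

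\textbf{Fix (this is what the paper does).} Stein-factorize $f$ on $X$ itself, $X\to T'\to T$, and work over $T'$.
\begin{itemize}
\item $T'$ is semi-normal, since $\cO_{T'}=f_*\cO_X$ and $X$ is demi-normal, hence semi-normal.
\item $T'\to T$ is finite, so finite generation of $R_X$ and the stable model are the same over $T'$ and over $T$.
\item This also replaces your non-surjectivity fix. The image of $X$ in $T$ need not be semi-normal, so it is not a valid base for Theorem~\ref{thm: general version}.
\end{itemize}
Now $f$ is birational with connected fibres. Quasi-finiteness at $\eta_W$ therefore forces $f^{-1}(f(\eta_W))=\{\eta_W\}$. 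Hence $f$ is finite over a neighbourhood of $f(\eta_W)$, and so an isomorphism there because $f_*\cO_X=\cO_{T'}$. Set $T^0=T'\setminus f(\Ex(f))$. Then $X^0\simeq T^0$ meets every log center lying on $D_X$, and the identity is trivially the good stable model over $T^0$.

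\textbf{Normalization step.} Your componentwise Stein factorization there is a reasonable way to meet the normal-base hypotheses of the cited results; the paper simply cites them. The real subtlety is the choice of $T^0$, not the normalization step you flagged as the main obstacle.
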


\subsubsection{Complex Analytic Spaces}

While Koll\'ar's gluing theory has played a fundamental role in the algebraic setting, its extension to complex analytic spaces is not currently available in the same generality. In contrast, the main technical results used in the proof of Theorem~\ref{thm: general version} have direct analogues for complex analytic spaces~\cite{Fujino_MMPanalytic,Fujino_ConeContractionAnalytic,Fujino_RelativeLogPluricanonical,Fujino_QuasiLogAnalytic,Fujino_VanishingAnalytic,Enokizono-Hashizume-analytic-MMP,Enokizono_Hashizume_MMP_stacks}, making our approach naturally suited to this context. The author intends to extend the results of this paper to projective morphisms of complex analytic spaces in forthcoming work.

\subsection{Avoiding Koll\'ar's Gluing}\label{subsec: motivation}

We briefly explain how the main results from the literature used in the proof can be invoked without using Koll\'ar's gluing theory in our setting. The two main technical inputs are~\cite[Theorem 2]{Hacon_Xu_finBrepr_slc_abundance} and~\cite[Theorem 1.1]{Hacon_Xu_dslt}.

In~\cite[Theorem 2]{Hacon_Xu_finBrepr_slc_abundance}, Hacon and Xu prove that the log canonical divisor of an slc pair is semi-ample if and only if the same holds for its normalization pair. The authors use Koll\'ar's gluing to descend the morphism induced by the log canonical divisor on the normalization to the slc pair. In our arguments, however, this result is only needed when the base is projective. In this case, the required semi-ampleness statement was also proved by Fujino and Gongyo using different methods~\cite[Theorem 1.4]{FG_fin_Brepr},\cite{FujinoAbundanceSLC}; see also~\cite[Remark 1.5]{Hacon_Xu_finBrepr_slc_abundance}.

In~\cite[Theorem 1.1]{Hacon_Xu_dslt}, Hacon and Xu prove, in particular, that a dlt pair over a normal base $T$ admitting a log canonical model over a dense open $T^0$, admits a log canonical model over $T$, provided that no log canonical center is mapped into $T\setminus T^0$. The theorem relies on Koll\'ar's gluing only to establish the semi-ampleness result in~\cite[Proposition 3.1]{Hacon_Xu_dslt}, which is a special case of~\cite[Theorem 2]{Hacon_Xu_finBrepr_slc_abundance}. Thus, when the base is projective, we can replace \cite[Proposition 3.1]{Hacon_Xu_dslt} with~\cite[Theorem 1.4]{FG_fin_Brepr}, as explained above, and consequently use~\cite[Theorem 1.1]{Hacon_Xu_dslt} without invoking Koll\'ar's gluing theory.

For these reason, in Lemma~\ref{lem: reduction to projective case} we reduce Theorem~\ref{thm: general version} to the case where $T$ is a projective scheme over $\CC$.

We could have avoided this reduction by using the generalization~\cite[Theorem 1.1]{Fujino_MMPanalytic} of~\cite[Theorem 2]{Hacon_Xu_finBrepr_slc_abundance} to the analytic setting. Fujino's proof does not rely on Koll\'ar's gluing theory, as it is not even clear how well that machinery works for analytic spaces.

\subsection{Strategy of the proof}\label{subsec: proof strategy}

First, in Lemma~\ref{lem: reduction to projective case} we will reduce Theorem~\ref{thm: general version} to the case where $T$ is projective over $\CC$, which we will assume throughout.

We then deduce Theorem~\ref{thm: general version} from the semi-klt case Theorem~\ref{thm: equivariant sklt}. We fix an $f$-ample Cartier divisor $A_X$ and, for $0<\epsilon\ll1$, perturb the pair $(X,\Delta_X)$ to $(X,\Delta_X+\epsilon A_X)$, which is semi-klt by Lemma~\ref{lem: improving boundary by perturbation}. We apply Theorem~\ref{thm: equivariant sklt} to obtain a stable model $\phi_+^c\colon X\rightarrow X^c_+$ of the perturbed pair. For $0<\epsilon\ll1$, this model is independent of $\epsilon$. Setting $\Delta_{X^c_+}:=(\phi_+^c)_*\Delta_X$, we show that the divisor $K_{X^c_+}+\Delta_{X^c_+}$ on $X_+^c$ is $\QQ$-Cartier and semi-ample over $T$, and hence that its associated $\cO_T$-algebra is finitely generated. The main technical input is~\cite[Theorem 1.4]{FG_fin_Brepr}: over a projective base, the log canonical divisor of an slc pair is semi-ample if and only if the corresponding divisor on the normalization is semi-ample.

It remains to prove Theorem~\ref{thm: equivariant sklt}. Its proof proceeds by induction on the dimension and does not require $T$ to be projective. By assumption, the normalization pair $(\Xbar,D_{\Xbar}+\Delta_{\Xbar})$ admits a log canonical model $(\Xbar^c,D_{\Xbar^c}+\Delta_{\Xbar^c})$ over $T$, hence also a good minimal model $(\Xbar^m,D_{\Xbar^m}+\Delta_{\Xbar^m})$ (see~\cite{Fujino2011_MMPtermination} and~\cite[Corollary 2.9]{Hacon_Xu_dslt}). Adjunction gives a restriction homomorphism between log canonical rings $R_{\Xbar^m}\rightarrow R_{D_{\Xbar^m}}$. By Kawamata-Viehweg vanishing, this map induces a surjection between suitable Veronese subalgebras (Lemma~\ref{lem: surjectivity restriction minimal model}). It follows that $D_{\Xbar^c}$ is the stable model of $D_{\Xbar^m}$ over $T$, and that the induced morphism $D_{\Xbar^m}\rightarrow D_{\Xbar^c}$ is crepant. There is an honest $G$-action on $\Xbar^c$ which preserves the pair $(D_{\Xbar^c},\Diff_{D_{\Xbar^c}}(\Delta_{\Xbar^c}))$ and induces an action on its normalization. On the normalization of $D_{\Xbar^c}$, there is an involution $\mu$ whose restriction over $T^0$ is the gluing involution induced by the normalization map $\Xbar^{0,c}\rightarrow X^{0,c}$. The construction of $\mu$ relies on~\cite[Theorem 11.40]{Kollar-moduli}, which is where we use our assumption that $X^0$ intersects all log centers lying on $D_X$.

Let $H=\langle G,\mu\rangle$ denote the subgroup of birational self-maps generated by $G$ and $\mu$. Then $H$ is finite and acts B-birationally on $(D_{\Xbar^m}^n,\Diff_{D_{\Xbar^m}^n}(\Delta_{\Xbar^m}))$ by Remark~\ref{rmk: Bir is a crepant invariant}. By the inductive hypothesis, the inclusions
\[
\begin{tikzcd}
    R_{D_{\Xbar^m}}\cap R_{D_{\Xbar^m}^n}^H\arrow[r,hookrightarrow] & R_{D_{\Xbar^m}^n}^{H}\arrow[r,hookrightarrow] & R_{D_{\Xbar^m}^n}
\end{tikzcd}
\]
are finite extensions of $\cO_T$-algebras. Since the restriction map $\res\colon R_{\Xbar^m}\rightarrow R_{D_{\Xbar^m}}$ is surjective on a suitable Veronese subalgebra, it follows that
\[
\begin{tikzcd}
    \res^{-1}(R_{D_{\Xbar^m}}\cap R_{D_{\Xbar^m}^n}^H)\arrow[r,hookrightarrow] & R_{\Xbar^m}
\end{tikzcd}
\]
is also an integral, hence finite, extension. Finally, passing from this to the finiteness of the extension $R_{\Xbar^m}\cap R_{\Xbar^m}^G\hookrightarrow R_{\Xbar^m}$ is a purely algebraic argument.

\subsection{Structure of the paper}

We set up the basic definitions and conventions in \S\ref{subsec: conventions} and \S\ref{sec: preliminaries}. In the latter, we also cover some preliminary material, mainly on semi-dlt pairs.

In \S\ref{sec: stable models}, we introduce and study the notion of good stable models of slc pairs, a slight generalization of log canonical models for lc pairs.
The main result of this section is Theorem~\ref{thm: projRX is stable model}, which, together with Proposition~\ref{prop: Y=proj RX good stable model}, characterizes good stable models as the Proj of the log canonical algebra, similarly to the classical case.

The proofs of the main theorems are carried out in \S\ref{sec: proof main theorems}. Specifically, Theorem~\ref{thm: equivariant sklt} is proved in \S\ref{subsec: proof equivariant sklt}, Theorem~\ref{thm: general version} in \S\ref{subsec: proof main theorem}, and Corollaries~\ref{cor: main intro} and \ref{cor: simple normal crossing case} in \S\ref{subsec:corollaries}.

Finally, in \S\ref{sec:slc-MMP} we prove Corollary~\ref{cor: analog Birkar} and discuss how to obtain the existence of certain steps of the MMP for slc pairs.

\subsection{Notation and Conventions}\label{subsec: conventions}

Recall that a scheme $X$ over $\CC$ is \emph{essentially of finite type} if it admits a finite open cover by spectra of localization of finitely generated $\CC$-algebras. We will always work with reduced schemes that are separated and essentially of finite type over $\CC$. Unless explicitly stated otherwise, all schemes are assumed to satisfy the above conditions. We will call a scheme \emph{variety} when it is further assumed to be of finite type.

\subsubsection{Birational maps of schemes}\label{subsubsec: birational maps}

Given two schemes $X$ and $Y$, we say that a rational map $f\colon X\dashrightarrow Y$ is \emph{birational} if it induces a bijection between the sets of irreducible components, and its restriction to each irreducible component is birational onto its image.

The \emph{exceptional locus} $\Ex(f)$ of a birational map $f\colon X\dashrightarrow Y$ of schemes is the complement of the largest open of $X$ on which $f$ is a morphism and an isomorphism onto its image. In particular, the indeterminacy locus of $f$ is contained in $\Ex(f)$.

\subsubsection{Relative log canonical algebra}\label{subsubsec: log canonical algebra}

Let $f\colon X\rightarrow T$ be a proper morphism and let $D$ be a $\QQ$-Cartier $\QQ$-divisor on $X$. For every $m\geq0$, we denote by $\cO_X(mD)$ the reflexive sheaf $\cO_X(\lfloor mD\rfloor)$, and set
\[
    R(X/T,D):=\oplus_{m\geq0}f_*\cO_X(mD),
\]
which has a natural structure of $\cO_T$-algebra.
When $D$ and $T$ are clear from context, we simply write $R_X$ in place of $R(X/T,D)$.

Given an integer $\ell>0$, a scheme $T$, and a sheaf of graded $\cO_T$-algebras $R=\oplus_{n\geq0} R_n$, we denote by $R^{(\ell)}:=\oplus_{n\geq0}R_{\ell n}$ the $\ell$-th \emph{Veronese subalgebra} of $R$.

\subsubsection{Discrepancies and log (canonical) centers}\label{subsubsec: conventions discrepancy and centers}

Given a demi-normal pair $(X,\Delta_X)$ (Definition~\ref{def: deminormal pairs}), a proper birational morphism $Y\rightarrow X$ and an irreducible divisor $E$ on $Y$ whose general point is regular in $Y$, we denote by $a(E,X,\Delta_X)$ the \emph{discrepancy of $E$}.

For an irreducible subscheme $W\subset X$, the \emph{minimal log discrepancy} of $W$ is
\[
    \mathrm{mld}(W,X,\Delta_X)=\mathrm{inf}_{E}\{1+a(E,X,\Delta_X) : \mathrm{center}_{X}E=W\}.
\]
Note that in the above definition we used \emph{log discrepancies}, as this seems to be the usual variant here.

A \emph{log center} $W$ of a log canonical pair $(X,\Delta_X)$ is the center of a divisor over $X$ with negative discrepancy; equivalently, $\mathrm{mld}(W,X,\Delta_X)<1$. It is called a \emph{log canonical center} if $\mathrm{mld}(W,X,\Delta_X)=0$. We also regard each connected component of $X$ as a log canonical center. These definitions extend to slc pairs $(X,\Delta_X)$, by declaring a log center (respectively, log canonical center) to be the image of a log center (respectively, log canonical center) of its normalization pair $(\Xbar,D_{\Xbar}+\Delta_{\Xbar})$. Note that this includes the irreducible components of $X$ and the strata of the conductor $D_X$. See~\cite[\S4-5]{Kollar-singularities} for more details.

\subsection*{Acknowledgments}

This project was motivated by a question raised by Dan Abramovich about simplifying the proof of the properness of the KSBA moduli space of stable pairs. The author is grateful to him for his suggestions and support. He would also like to thank Alessio Corti, Christopher Hacon, and J\'anos Koll\'ar for generously answering numerous questions and sharing their insights. Finally, the author thanks Eric Jovinelly, Roberta Pagliaro, Simon Stojkovic, and Hao (Nick) Sun for useful conversations on this and related projects.
This research is supported by NSF grant DMS-2401358.

\subsection*{AI disclosure}

The author used AI assistance in a limited capacity for proofreading, improving the clarity and exposition of this paper, and occasional review of proofs. This review did not result in substantive modifications to the proofs. The mathematical results and arguments were developed independently by the author.

\section{Preliminaries}\label{sec: preliminaries}

In this section we recall some basic definitions and properties that we will need in the following. Most of the notions and results can be found in~\cite[\S5]{Kollar-singularities}.

\begin{definition}[Demi-normal pairs]\label{def: deminormal pairs}
    A \emph{scheme} will always be assumed to be reduced, separated, and essentially of finite type over $\CC$. Recall that a scheme $X$ over $\CC$ is essentially of finite type if it admits a finite open cover by spectra of localizations of finitely generated $\CC$-algebras. A \emph{variety} is a scheme of finite type. Let $X$ be a scheme.
    \begin{itemize}
        \item We say that $X$ is \emph{demi-normal} if it has at most nodal singularities in codimension 1, and it satisfies Serre's property $S_2$. The \emph{conductor} of $X$ is the closure of the nodal locus of $X$, and denoted by $D_X$ (\cite[5.2]{Kollar-singularities}).
        \item A \emph{Weil divisor} $\Delta_X=\sum_i a_iD_i$ on $X$ is a finite sum with integral coefficients $a_i$ of irreducible codimension-1 subschemes $D_i$ such that $X$ is regular at their generic points. If we allow the coefficients $a_i$ to be rational, we say that $\Delta_X$ is a \emph{$\QQ$-divisor}. A $\QQ$-divisor is a \emph{boundary} if the coefficients satisfy $0\leq a_i\leq 1$.
        \item A \emph{$\QQ$-Cartier $\QQ$-divisor} on a demi-normal scheme is a $\QQ$-divisor $\Delta_X$ for which there exists a positive integer $m$ such that $m\Delta_X$ is integral and Cartier.
        \item A pair $(X,\Delta_X)$ is a \emph{demi-normal pair} (respectively, \emph{normal pair}) if $X$ is a demi-normal scheme (respectively, normal), and $\Delta_X$ is a boundary $\QQ$-divisor in $X$.
    \end{itemize}
\end{definition}

\begin{remark}
    Let $X$ be a demi-normal scheme, and let $\Delta_X=\sum_i a_iD_i$ be sum of irreducible codimension-1 subschemes. Then, $\Delta_X$ is a $\QQ$-divisor if and only if for all $i$ the subscheme $D_i$ is not an irreducible component of the conductor $D_X$.
\end{remark}

\begin{remark}[Normalization pair]\label{rmk: normalization pair}
    Let $(X,\Delta_X)$ be a demi-normal pair, and $\pi_X\colon \Xbar\rightarrow X$ the normalization. The ideal sheaf of the conductor $D_X\subset X$ is the largest ideal sheaf of $\cO_X$ that is also an ideal sheaf of $\cO_{\Xbar}$, and it is equal to $\mathcal{H}om_X(\pi_*\cO_{\Xbar},\cO_X)$. The corresponding divisor in $\Xbar$ is also called conductor and denoted by $D_{\Xbar}$.
    
    The \emph{normalization pair} is $(\Xbar,D_{\Xbar}+\Delta_{\Xbar})$ where $\Delta_{\Xbar}$ is the strict transform of $\Delta_X$.

    We denote by $D_{\Xbar}^n$ the normalization of $D_{\Xbar}$. The map $D_{\Xbar}\rightarrow D_X$ between conductors is generically a double cover, and the associated birational involution on $D_{\Xbar}$ extends to a regular involution $\mu\colon D_{\Xbar}^n\rightarrow D_{\Xbar}^n$. Moreover, $\mu$ is compatible with the different (\S\ref{subsec: adjunction}): $\mu^*\Diff_{D_{\Xbar}^n}(\Delta_{\Xbar})=\Diff_{D_{\Xbar}^n}(\Delta_{\Xbar})$, see~\cite[Proposition 5.12]{Kollar-singularities}. The pair $(X,\Delta_X)$ is uniquely determined by the tuple $(\Xbar,D_{\Xbar},\Delta_{\Xbar},\mu)$ (\cite[Proposition 5.3]{Kollar-singularities}). In particular, two demi-normal pairs that are isomorphic in codimension 1 and have the same normalization are necessarily isomorphic.

    Given a tuple $(\Xbar,D_{\Xbar}+\Delta_{\Xbar},\mu)$, deciding whether or not this arises as the normalization pair of a demi-normal pair is incredibly hard and not fully understood. Koll\'ar's gluing theory \cite[\S5, \S9]{Kollar-singularities} was designed precisely to give a satisfactory answer to this problem in the context of slc pairs, and proved to be very successful.
\end{remark}

The following construction is the demi-normal analogue of the normalization of a scheme; see~\cite[Definition 5.1]{Kollar-singularities}.

\begin{definition}[Demi-normal modification]\label{def: deminormal modification}
    Let $Z$ be a scheme. Suppose that there exists an open subscheme $j\colon U\hookrightarrow Z$ with at most nodal singularities, and whose complement has codimension at least 2. Then, $\widetilde{Z}:=\Spec_Z j_*\cO_U$ is a demi-normal scheme with a finite morphism $\delta_Z\colon \widetilde{Z}\rightarrow Z$, called the \emph{demi-normal modification} of $Z$.
\end{definition}

The following simple lemma shows that the demi-normal modification is independent of the open $U$, and satisfies a universal property.

\begin{lemma}\label{lem: universal property deminormal modification}
    With the notation of Definition~\ref{def: deminormal modification}, the following hold.
    \begin{enumerate}
        \item $\delta_Z$ is an isomorphism over $U$, and the normalizations of $\widetilde{Z}$ and $Z$ coincide.
        \item Let $f\colon X\rightarrow Z$ be a morphism from a demi-normal scheme, and suppose that $f^{-1}(U)\subset X$ contains all the generic points of the conductor of $X$. Then, $f$ factors uniquely through $\delta_Z$. Moreover, this property is satisfied if $f$ does not contract any component of the conductor. In particular, $\delta_Z\colon \widetilde{Z}\rightarrow Z$ is independent of $U$.
    \end{enumerate}
\end{lemma}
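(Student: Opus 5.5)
For (1), I would work locally over an affine open of $Z$. Over $U$ we have $j_*\cO_U|_U=\cO_U$, so $\delta_Z$ is an isomorphism over $U$. Since $Z$ is reduced and $U$ is dense (its complement has codimension at least $2$), $\cO_Z\subset j_*\cO_U$. To see that $j_*\cO_U$ is a finite $\cO_Z$-module, I would use the normalization $\pi\colon\Zbar\to Z$ and set $\bar U=\pi^{-1}(U)$. Because $\Zbar$ is normal and $\Zbar\setminus\bar U$ has codimension at least $2$, we get $\pi_*\cO_{\Zbar}=\pi_*j'_*\cO_{\bar U}\supset j_*\cO_U$, where $j'\colon\bar U\hookrightarrow\Zbar$ is the inclusion. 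Hence
\[
\cO_Z\subset j_*\cO_U\subset\pi_*\cO_{\Zbar}.
\]
Since $\pi_*\cO_{\Zbar}$ is finite over $\cO_Z$ (excellence, as we are essentially of finite type over $\CC$) and $Z$ is noetherian, $j_*\cO_U$ is finite. This sandwich also shows that $\widetilde Z\to Z$ is finite and birational with $\Zbar\to\widetilde Z$ finite birational and $\Zbar$ normal, so $\Zbar$ is the normalization of $\widetilde Z$.

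Next I would check that $\widetilde Z$ is demi-normal. It is nodal in codimension $1$ because it agrees with $U$ over $U$, and $\delta_Z^{-1}(Z\setminus U)$ has codimension at least $2$ by finiteness. For $S_2$, observe that $\cO_{\widetilde Z}=\tilde j_*\cO_{\tilde U}$, with $\tilde U=\delta_Z^{-1}(U)$ and $\tilde j\colon\tilde U\hookrightarrow\widetilde Z$ the inclusion. Since $\tilde U$ is nodal, hence $S_2$ (nodal curves in codimension $1$, and $U$ is assumed to be a scheme with nodal singularities, so Gorenstein), pushing forward across codimension at least $2$ gives a sheaf satisfying $S_2$. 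This is the standard $S_2$-fication criterion: a reduced scheme is $S_2$ if and only if $\cO=\tilde j_*\cO_{\tilde U}$ for all big opens $\tilde U$ that are $S_2$.

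For (2), let $f\colon X\to Z$ be given, and let $V=f^{-1}(U)$. I would try to show that $V$ has the following property: $X\setminus V$ contains no codimension-$1$ point at which $X$ is nodal. The obstacle is that $X\setminus V$ may contain regular codimension-$1$ points. The remedy is to let $V'\supset V$ be the open set obtained by adding the locus where $X$ is regular (normal) in codimension $1$, and to work via the normalization $\bar f\colon\Xbar\to\Zbar$. The map $f$ induces $\cO_Z\to f_*\cO_X$, and I want to extend it to $j_*\cO_U\to f_*\cO_X$.

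A section $s$ of $j_*\cO_U$ pulls back to a section of $\cO_V$. Since $s$ also lies in $\pi_*\cO_{\Zbar}$, and $\Xbar\to Z$ factors through $\Zbar$ (normalization is functorial for maps sending components to components, which holds here because $V$ is dense), $s$ pulls back to a regular function on $\Xbar$. These two pullbacks agree on $\pi_X^{-1}(V)$. Therefore $f^*s$ is a section of $\cO_{\Xbar}$ that descends to $X$ over $V$, where $V$ contains all the generic points of $D_X$. On the complement of $D_X$, $X$ equals $\Xbar$ locally, so $f^*s$ descends there too. Altogether $f^*s$ lies in $\cO_X$ on an open set with complement of codimension at least $2$, and hence in $\cO_X$ because $X$ is $S_2$. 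This gives the factorization, and uniqueness follows because $\delta_Z$ is an isomorphism over the dense set $U$ and $X$ is reduced.

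If $f$ contracts no component of the conductor, then the generic points of $D_X$ map to points of codimension at most $1$ in $Z$, hence into $U$ (at least when $f$ is dominant and equidimensional in the relevant sense). Independence of $U$ then follows by applying the universal property in both directions. The main obstacle I expect is this descent step in (2): showing that the pulled-back functions glue across $D_X$, which is exactly where the hypothesis on the generic points of the conductor enters.
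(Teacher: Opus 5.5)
Your proposal follows essentially the same route as the paper. Both obtain the lift on $f^{-1}(U)\cup(X\setminus D_X)$: over $U$ because $\delta_Z$ is an isomorphism there, and on the normal locus $X\setminus D_X$ by passing through $\Zbar$. Both then extend across the complement, which has codimension at least $2$, using that $X$ is $S_2$. You phrase this with pulled-back functions rather than glued morphisms, and you also check finiteness and demi-normality of $\widetilde Z$, which the paper takes from the definition.

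One caveat, shared with the paper's proof: factoring through $\Zbar$ needs each component of $X$ to dominate a component of $Z$. Your stated justification, density of $f^{-1}(U)$, is not enough for this, and the hypothesis does not even guarantee that density for components of $X$ missing $D_X$. So both arguments tacitly assume $f$ is dominant on components, which holds in every application in the paper.
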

\begin{proof}
    The first point follows immediately from the definition. For the second point, note that giving a lift $X\rightarrow\widetilde{Z}$ of $f$ is equivalent to giving a factorization $\cO_Z\rightarrow j_*\cO_U\rightarrow f_*\cO_X$. Since $V=X\setminus D_X$ is normal, $f|_V\colon V\rightarrow Z$ factors through the normalization of $Z$, hence through $\widetilde{Z}$ by the first point of this lemma. Let $i\colon W=f^{-1}(U)\cup V\hookrightarrow X$, and note that its complement has codimension at least 2 in $X$. Since $X$ and $Z$ are separated, any factorization through $\widetilde{Z}$ is unique. In particular, $f|_W\colon W\rightarrow Z$ factors through $\delta_Z$. Since $X$ is demi-normal, we get the desired morphism $j_*\cO_U\rightarrow (f|_W)_*\cO_W=f_*i_*\cO_W\simeq f_*\cO_X$.
\end{proof}

\begin{example}\label{exm: deminormal modifications}
    We present some examples where the demi-normal modification is useful.
    \begin{enumerate}
    \item\label{exm: deminormal stein factorization} Let $X\rightarrow Y$ be a proper morphism from a demi-normal scheme $X$ to a scheme $Y$ that is nodal in codimension 1, and let $X\rightarrow Z\rightarrow Y$ be the Stein factorization. Suppose that $X\rightarrow Y$ has connected fibers over an open $U\subset Y$ whose complement has codimension at least 2, and $f$ does not contract components of the conductor. Then, $Z\rightarrow Y$ is an isomorphism over $U$, hence $Z$ is nodal in codimension 1. Since $X$ is demi-normal, $X\rightarrow Z$ factors through the demi-normal modification $\widetilde{Z}\rightarrow Z$, which is a finite map. From the properties of the Stein factorization, we get that $\widetilde{Z}\simeq Z$, that is, $Z$ is demi-normal.
    \item\label{exm: deminormal boundary} Let $X$ be a normal scheme and $S\subset X$ a reduced subscheme of pure codimension 1 such that $(X,S+B)$ is an lc pair for some $\QQ$-divisor $B$. Then, $S$ is nodal in codimension 1 by~\cite[Proposition 16.6]{K+92}, hence we can take the universal demi-normal modification $\widetilde{S}\rightarrow S$. In general $S$ does not satisfy Serre's condition $S_2$, even when $S$ is realized as the conductor of a normalization pair. See~\cite[Example 17.5.2]{K+92} or Example~\eqref{exm: conductor is not always S2} for instances where the $S_2$ property fails.
    \item \label{exm: conductor is not always S2}
    Consider the product $S=E\times\PP^1$ where $E$ is a smooth curve of genus 1. Let $L$ be an ample line bundle on $S$, and let $X$ be the cone over $S$ corresponding to $L$. Its natural resolution is the total space of $L$ itself. Let $D=E\times\{0,\infty\}$, and $\Delta_X$ be the cone over $D$ corresponding to $L|_D$. Since $K_S+D\sim 0$, the pair $(X,\Delta_X)$ is an lc pair that is not dlt by~\cite[Lemma 3.1]{Kollar-singularities}. In fact, $\Delta_X=\lfloor\Delta_X\rfloor$ is the union of two surfaces meeting at one point, hence not $S_2$.
    \end{enumerate}
\end{example}

We remind the reader of the definition of dlt and semi-dlt pairs, and introduce the notion of semi-klt pair. First, let us recall the definition of \emph{semi-SNC} of~\cite[Definition 1.10]{Kollar-singularities}; see~\cite[\S2.1]{Wlodarczyk-nc} for related notions. As usual, SNC stands for \emph{simple normal crossing}.

\begin{definition}[semi-SNC pairs]\label{def: SNC adapted}
    Let $X$ be a SNC scheme. We say that a demi-normal pair $(X,\Delta_X)$ is \emph{semi-SNC} if Zariski-locally we can embed $X$ into a smooth scheme $Y$ carrying a SNC divisor $X+\Delta_Y$ such that $\Delta_Y|_X=\Delta_X$. We also say that $\Delta_X$ is \emph{SNC-adapted}.
\end{definition}

\begin{definition}[dlt/sdlt/sklt pairs]\label{def: dlt/sdlt/sklt}
    A pair $(X,\Delta_X)$ with $X$ normal is dlt if there exists a closed subscheme $Z\subset X$ such that $U:=X\setmin Z$ is smooth, $\Delta_X|_U$ is a SNC divisor on $U$, and $a(E,X,\Delta_X)>-1$ for all divisors $E$ over $X$ with center in $Z$.

    An slc pair $(X,\Delta_X)$ is \emph{semi-divisorial log terminal}, or \emph{semi-dlt}, if there exists a closed subscheme $Z\subset X$ such that $(U,\Delta_U)$ is semi-SNC, where $U:=X\setmin Z$ and $\Delta_U:=\Delta_X|_U$, while $a(E,X,\Delta_X)>-1$ for all divisors $E$ over $X$ with center in $Z$.

    A semi-dlt pair $(X,\Delta_X)$ is called \emph{semi-Kawamata log terminal}, or \emph{semi-klt}, if in addition $\lfloor\Delta_X\rfloor=0$.
\end{definition}

\begin{remark}\label{rmk: basics semidlt}
    Note that every irreducible component of a semi-dlt pair $(X,\Delta_X)$ is normal \cite[Proposition 5.20]{Kollar-singularities}, and the normalization pair $(\Xbar,D_{\Xbar}+\Delta_{\Xbar})$ is dlt.
    
    In other works, semi-dlt pairs are more generally defined as slc pairs whose normalization is dlt and with normal irreducible components (\cite[Definition 1.1]{FujinoAbundanceSLC},\cite[Definition 2.5]{FG_fin_Brepr}). We instead adopt the more stringent version of K\'ollar~\cite[Definition 5.19]{Kollar-singularities}.
    
    The definition of semi-klt pair is motivated by the fact that a dlt pair $(Y,\Delta_Y)$ is klt if and only if $\lfloor\Delta_Y\rfloor=0$ (\cite[Proposition 2.41]{Kollar_Mori_1998}). In particular, a semi-dlt pair $(X,\Delta_X)$ is semi-klt if and only if $(\Xbar,\Delta_{\Xbar})$ is klt.
\end{remark}

\begin{remark}\label{rmk: self-intersection case}
    Recall that every lc pair admits a $\QQ$-factorial crepant dlt modification with desirable properties~\cite[\S1.4]{Kollar-singularities}. For this reason, results about dlt pairs have fundamental consequences for all lc pairs.
    In contrast, semi-dlt pairs are too special to reduce statements about slc pairs to the semi-dlt case without any additional argument, as they have normal irreducible components. However, for many applications, such as finite generation of the log canonical algebra, we are allowed to take finite covers. We can get rid of codimension-1 self-intersections by taking a (Galois) double cover that is étale in codimension 1 and trivial at the level of normalizations; see~\cite[5.23]{Kollar-singularities} for details. Then, for an slc pair whose irreducible components are normal in codimension 1 we can apply~\cite[Theorem 1.2]{Hashizume_sdlt_models} to get a Gorenstein crepant semi-dlt model (see also \S\ref{sec:slc-MMP}).
\end{remark}

\subsection{Adjunction}\label{subsec: adjunction}

One of the most important tools in the Minimal Model Program is \emph{adjunction}. Given a normal pair $(X,S+B)$ with $\lfloor S\rfloor=S$, there exists a natural divisor $\Diff_{S^n}(B)$ on the normalization $S^n$ of $S$, called \emph{different}, such that $(S^n,\Diff_{S^n}(B))$ is a normal pair and a variant of Poincar\'e Residue Theorem applies. Moreover, the singularity behavior of $(X,S+B)$ near $S$ and that of $(S^n,\Diff_{S^n}(B))$ are tightly linked, and one is lc if and only if the other is. Results of this type are called adjunction and inverse of adjunction, and are essential for inductive proofs.
When $S$ satisfies Serre's condition $S_2$, the different can be defined directly on $S$; more generally, it can be defined on the demi-normal modification $\widetilde{S}$ of it (Definition~\ref{def: deminormal modification}).
We refer to~\cite[\S4]{Kollar-singularities} and~\cite[\S16]{K+92} for the rigorous constructions and results.

In this subsection we limit ourselves to collecting the following results on adjunction for (semi-)dlt pairs.

\begin{lemma}\label{lem: semidlt adjunction}
    Let $(X,\Delta_X)$ be a dlt pair, $S\leq\lfloor\Delta_X\rfloor$, and $B=\Delta_X-S$. Then,
    \begin{enumerate}
        \item $S$ is demi-normal, and $(S,\Diff_{S}(B))$ is semi-dlt.
        \item If $S=\lfloor\Delta_X\rfloor$ (equivalently, $\lfloor B\rfloor=0$), then $(S,\Diff_{S}(B))$ is semi-klt.
    \end{enumerate}
    In particular, if $(Z,\Delta_Z)$ is a semi-dlt pair and $(\Zbar,D_{\Zbar}+\Delta_{\Zbar})$ is the normalization pair, then $(D_{\Zbar},\Diff_{D_{\Zbar}}(\Delta_{\Zbar}))$ is semi-dlt. Moreover, $(D_{\Zbar},\Diff_{D_{\Zbar}}(\Delta_{\Zbar}))$ is semi-klt if and only if $(Z,\Delta_Z)$ is semi-klt along its conductor $D_Z$.
\end{lemma}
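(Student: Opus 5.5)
The plan is to work locally and reduce everything to the semi-SNC picture, using the definition of dlt (Definition~\ref{def: dlt/sdlt/sklt}). Let $Z\subset X$ be the closed subset such that $U=X\setminus Z$ is smooth with $\Delta_X|_U$ SNC, and such that $a(E,X,\Delta_X)>-1$ for every divisor $E$ with center in $Z$. Since $S\le\lfloor\Delta_X\rfloor$ is reduced, no stratum of $\lfloor\Delta_X\rfloor$ is contained in $Z$, because such a stratum is an lc center. In particular $Z\cap S$ has codimension at least $2$ in $S$, and likewise for every stratum of $S$.

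\textbf{The semi-SNC part of $(S,\Diff_S(B))$.} On $U$, the divisor $S|_U$ is a sum of smooth components of an SNC divisor. Hence $S\cap U$ is an SNC scheme, embedded in the smooth $U$ with $S|_U+B|_U$ SNC. Moreover $\Diff_S(B)|_{S\cap U}=B|_{S\cap U}$, since the different has no correction along a smooth SNC configuration. So $(S\cap U,\Diff)$ is semi-SNC in the sense of Definition~\ref{def: SNC adapted}, with the ambient $U$ and $Y=U$. In particular $S$ has only nodes in codimension $1$.

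\textbf{$S_2$ and demi-normality.} For $S_2$ I would use the standard fact that in a dlt pair $X$ is Cohen--Macaulay near $\lfloor\Delta_X\rfloor$, and that every reduced sum $S$ of components of $\lfloor\Delta_X\rfloor$ is Cohen--Macaulay; both come from Koll\'ar's results on dlt pairs \cite[\S4]{Kollar-singularities} and \cite[Corollary 5.25]{Kollar_Mori_1998}. This gives that $S$ is demi-normal. Then $(S,\Diff_S(B))$ is slc by adjunction, since the normalization of $S$ is the disjoint union of the normal components $S_i$. Each pair $(S_i,\Diff_{S_i}(\Delta_X-S_i))$ is lc, indeed dlt, by adjunction for dlt pairs \cite[Proposition 4.5, 4.6]{Kollar-singularities}. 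The remaining semi-dlt condition is the discrepancy condition: for a divisor $F$ over $S$ with center in $Z\cap S$, we need $a(F,S,\Diff)>-1$. A center of discrepancy $-1$ on some $S_i$ would be an lc center of $(S_i,\Diff)$. By inversion of adjunction, or by the description of lc centers of dlt pairs \cite[4.16]{Kollar-singularities}, it would then be an lc center of $(X,\Delta_X)$, i.e.\ a stratum of $\lfloor\Delta_X\rfloor$, and so not contained in $Z$. This proves (1).

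\textbf{Part (2).} If $S=\lfloor\Delta_X\rfloor$, then $\lfloor B\rfloor=0$. On $U$, the boundary $\Diff$ equals $B|_S$, which has no coefficient-one components away from the double locus. Hence $\lfloor\Diff_S(B)\rfloor=0$, and the pair is semi-klt.

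\textbf{The final statement.} Apply (1) to $(\Zbar,D_{\Zbar}+\Delta_{\Zbar})$, which is dlt by Remark~\ref{rmk: basics semidlt}, with $S=D_{\Zbar}$. This shows $(D_{\Zbar},\Diff)$ is semi-dlt. It is semi-klt iff $\lfloor\Diff_{D_{\Zbar}}(\Delta_{\Zbar})\rfloor=0$, which, computed generically on the semi-SNC locus, means that no component of $\lfloor\Delta_{\Zbar}\rfloor$ meets $D_{\Zbar}$ in codimension one. By the stratum description this is equivalent to $\lfloor\Delta_{\Zbar}\rfloor\cap D_{\Zbar}=\emptyset$, i.e.\ $(Z,\Delta_Z)$ is semi-klt near $D_Z$.

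I expect the main obstacle to be the $S_2$/Cohen--Macaulay property of $S$, which I would cite rather than reprove.
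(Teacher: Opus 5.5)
There is a genuine gap, in your opening claim that $Z\cap S$ has codimension at least $2$ in $S$ (and in every stratum). Knowing that the strata of $\lfloor\Delta_X\rfloor$ are lc centers only tells you that $Z$ contains no stratum. It does not prevent $Z$ from containing prime divisors of $S$ away from the double locus.

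For example, let $X$ be the quotient of $\AA^2$ by $\ZZ/n\ZZ$ ($n\geq 2$) acting via $(x,y)\mapsto(\zeta x,\zeta y)$, and let $S$ be the image of $\{y=0\}$. The quotient map is étale in codimension one, so $(X,S)$ is plt, hence dlt. But the vertex $p$ is a singular point of $X$, so $p\in Z$ for every admissible $Z$. Thus $Z\cap S=\{p\}$ is a divisor on the curve $S$, and there $\Diff_S(0)=(1-\frac1n)p$.

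Consequently, in Part (2) and in the ``if'' direction of the final equivalence, computing $\Diff_S(B)$ on $S\cap U$ (where it equals $B|_S$) does not determine $\lfloor\Diff_S(B)\rfloor$. At prime divisors of $S$ contained in $Z$ the different acquires extra coefficients that you never bound. Your deduction that $S$ is nodal in codimension one rests on the same false claim.

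These divisors are exactly what the paper's proof of (2) handles. It localizes at codimension-one points of $S$, so that $(X,S+B)$ becomes a $\QQ$-factorial surface dlt germ. At a point that is not a node of $S$ the germ is plt, and klt adjunction forces the coefficient of the different to be $<1$.

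Within your framework the repair is short. Let $P\subset S$ be a prime divisor of $S$ contained in $Z$.
\begin{enumerate}
\item Only one component $S_i$ contains $P$. Otherwise $P$ would be an irreducible component of some $S_i\cap S_j$, i.e.\ a stratum contained in $Z$. Since $S_i$ is normal, $S$ is regular at the generic point of $P$.
\item Your discrepancy argument from Part (1), applied to $P$ itself regarded as a divisor over $S$ with center in $Z\cap S$, shows that the coefficient of $P$ in $\Diff_{S_i}(\Delta_X-S_i)$ is $<1$.
\end{enumerate}
With these two observations added, nodality in codimension one, Part (2), and the final equivalence all go through. The remaining ingredients are fine: the semi-SNC structure on $S\cap U$, the cited $S_2$ property, the discrepancy condition over $Z\cap S$ via lc centers, and the stratum-codimension argument for the ``only if'' direction.
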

\begin{proof}
    The first point is well-known, see for instance~\cite[Definition 5.19]{Kollar-singularities}.
    
    For the second point, by localizing at codimension 1 points of $S$, we can assume $(X,S+B)$ to be a surface dlt pair, hence $\QQ$-factorial by~\cite[Proposition 4.11]{Kollar_Mori_1998}. By $\QQ$-factoriality, we have that $\Diff_{S}(B)=\Diff_{S}(0)+B|_S$, and by the first point it is enough to show that $\lfloor\Diff_S(B)\rfloor=0$. By~\cite[Theorem 3.36]{Kollar-singularities}, if $p\in S$ is a point at which $\Diff_S(0)$ has coefficient 1, then $S$ is smooth at $p$. Then, $(X,S+B)$ is plt locally around $p$ by~\cite[Proposition 2.42]{Kollar_Mori_1998}, hence $(S^n,\Diff_{S^n}(B))$ is klt around $p$ by adjunction (\cite[Theorem 5.50]{Kollar_Mori_1998}); in particular, the coefficient of $\Diff_{S}(0)$ is actually strictly smaller than 1. Now, let $p$ be in the support of $B|_S$. Since we are working with surfaces, the characterization of lc centers on dlt pairs implies that $p$ is again a smooth point of $S$. It follows again by adjunction (\cite[Theorem 5.50]{Kollar_Mori_1998}) that the boundary pair is klt.

    Finally, let $(Z,\Delta_Z)$ be a semi-dlt pair. Then, its normalization pair is dlt, and $(\Zbar,\Delta_{\Zbar})$ is klt if and only if $(Z,\Delta_Z)$ is semi-klt (Remark~\ref{rmk: basics semidlt}). Then, the result follows from the first two points of the statement.
\end{proof}

We conclude the subsection by recalling the description of the log canonical ring of an slc pair in terms of the log canonical ring of its normalization and the gluing involution on the conductor (Remark~\ref{rmk: normalization pair}). See~\cite[Proposition 5.8]{Kollar-singularities} for a proof for the case over a field.

\begin{lemma}\label{lem: log canonical ring slc pair}
    Let $(X,\Delta_X)$ be an slc pair with normalization pair $(\Xbar,D_{\Xbar}+\Delta_{\Xbar})$, where $D_{\Xbar}$ is the conductor, and let $f\colon X\rightarrow T$ be a proper morphism to a scheme $T$. Let $\mu$ be the gluing involution on $(D_{\Xbar}^n,\Diff_{D_{\Xbar}^n}(\Delta_{\Xbar}))$, and let $R_{D_{\Xbar}^n}:=R(D_{\Xbar}^n/T,K_{D_{\Xbar}^n}+\Diff_{D_{\Xbar}^n}(\Delta_{\Xbar}))$. Let $\mu$ act on $R_{D_{\Xbar}^n}$ via $\mu\cdot\alpha=(-1)^m\mu^*\alpha$ for all $\alpha\in R_{D_{\Xbar}^n}$ of degree $m$. Then, the inclusion
    \[
    \begin{tikzcd}
        R_X:=R(X/T,K_X+\Delta_X)\arrow[r,hookrightarrow] & R(\Xbar/T,K_{\Xbar}+D_{\Xbar}+\Delta_{\Xbar})=:R_{\Xbar}
    \end{tikzcd}
    \]
    yields an identification
    \[
        R_X=\res_n^{-1}(R_{D_{\Xbar}^n}^\mu),
    \]
    where $R_{D_{\Xbar}^n}^{\mu}$ denotes the subalgebra of $\mu$-invariants, and
    \[
    \begin{tikzcd}
        \res_n\colon R_{\Xbar}\arrow[r] & R_{D_{\Xbar}^n}
    \end{tikzcd}
    \]
    is the natural residue (or restriction) map.
\end{lemma}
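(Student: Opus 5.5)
The plan is to reduce the statement to Koll\'ar's description over a field, \cite[Proposition 5.8]{Kollar-singularities}, by working locally on $T$ and one degree at a time. Both sides of the claimed identity are graded $\cO_T$-subalgebras of $R_{\Xbar}$, so it suffices to fix $m\geq0$ and an open $V\subset T$, and to check that a section $s\in H^0(f^{-1}V\times_X\Xbar,\cO_{\Xbar}(m(K_{\Xbar}+D_{\Xbar}+\Delta_{\Xbar})))$ descends to a section of $\cO_X(m(K_X+\Delta_X))$ over $f^{-1}V$ if and only if $\res_n(s)$ is invariant under $\mu$ acting by $(-1)^m\mu^*$. Since $f_*$ commutes with restriction to opens, this turns the statement into a sheaf-level statement on $X$. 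Namely, for every open $U\subset X$, the subsheaf $\cO_X(m(K_X+\Delta_X))\subset\pi_*\cO_{\Xbar}(m(K_{\Xbar}+D_{\Xbar}+\Delta_{\Xbar}))$ consists exactly of the sections whose residue is $\mu$-anti/invariant in the above sense. The base $T$ then plays no role.

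First, I would construct the inclusion $R_X\hookrightarrow R_{\Xbar}$. Since $X$ is $S_2$ and $\cO_X(m(K_X+\Delta_X))$ is reflexive, a section of it is determined by its restriction to the open $X^\circ$ obtained by removing a codimension-$2$ subset, on which $X$ is either smooth or has ordinary nodes, and $\Delta_X$ avoids the singular locus. On $X^\circ$ one has $\pi^*\omega_X=\omega_{\Xbar}(D_{\Xbar})$, so pulling back gives an injection into $\pi_*\cO_{\Xbar}(m(K_{\Xbar}+D_{\Xbar}+\Delta_{\Xbar}))$. The $S_2$ property of both sheaves then extends this injection across codimension $2$.

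Second, I would characterize the image on $X^\circ$ by a local computation at a node. Near the generic point of a component of $D_X$, in local coordinates $xy=0$, the generator of $\omega_X$ pulls back to $dx/x$ on one branch and $-dy/y$ on the other. Their Poincar\'e residues on the conductor are therefore $1$ and $-1$, identified via $\mu$. Taking $m$-th tensor powers, a section of $\omega_{\Xbar}(D_{\Xbar})^{\otimes m}$ descends if and only if its residues satisfy $\res(s)=(-1)^m\mu^*\res(s)$. This is exactly the sign convention in the statement, and over a field it is precisely \cite[Proposition 5.8]{Kollar-singularities}. The boundary $\Delta_X$ does not interfere, since its components are not contained in $D_X$ and the different agrees with $\Delta_{\Xbar}|_{D}$ at codimension-one points of $D_{\Xbar}^n$ lying over $X^\circ$.

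Third, I would extend the characterization from $X^\circ$ to all of $X$. If $s$ is a section over $U$ with $\mu$-invariant residue, then $s$ descends on $U\cap X^\circ$, hence extends to $U$ by reflexivity. Conversely, a descended section restricts to one with invariant residue on $X^\circ$, and invariance on $D_{\Xbar}^n$ is a closed condition determined on a dense open subset. The main obstacle is to make sure that $\res_n$ and $\mu$ are compatible on all of $D_{\Xbar}^n$, including over points not in $X^\circ$. I would handle this by noting that $D_{\Xbar}^n$ is normal and the sheaves involved are reflexive, so both the residue identity and the $\mu$-equivariance, once known in codimension $1$ on $D_{\Xbar}^n$, hold everywhere. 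Here one uses that $\mu$ preserves the different, as recalled in Remark~\ref{rmk: normalization pair}.
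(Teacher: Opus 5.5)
Your proposal is correct and takes the same approach as the paper. The paper gives no separate argument and simply cites \cite[Proposition 5.8]{Kollar-singularities}; your reduction to a sheaf-level identity on $X$ (local computation at nodes plus $S_2$-extension) is exactly what lets that citation apply over an arbitrary base $T$, with the only caveat that the preimage of $X^\circ$ in $D_{\Xbar}^n$ need not contain every codimension-one point of $D_{\Xbar}^n$ (e.g.\ points over $D_X\cap\Supp\Delta_X$ or over pinch points), which is harmless: $\cO_{D_{\Xbar}^n}(m(K_{D_{\Xbar}^n}+\Diff_{D_{\Xbar}^n}(\Delta_{\Xbar})))$ is torsion-free, so agreement on a dense open subset suffices, as you already observe.
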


\begin{remark}
    The residue map $\res_n$ factors through $\res\colon R_{\Xbar}\rightarrow R_{\widetilde{D}_{\Xbar}}$, where $\widetilde{D}_{\Xbar}$ is the demi-normal modification of $D_{\Xbar}$ (see Definition~\ref{def: deminormal modification} and Example~\ref{exm: deminormal modifications}\eqref{exm: deminormal boundary}) and $R_{\widetilde{D}_{\Xbar}}$ the corresponding log canonical $\cO_T$-algebra. In particular, if $(X,\Delta_X)$ is semi-dlt then $\res_n$ factors through $\res\colon R_{\Xbar}\rightarrow R_{D_{\Xbar}}$.
\end{remark}

\subsection{B-Birational Maps}

We recall the basic notion of \emph{B-birational maps}; here, the `B' stands for `boundary', as they can be thought as birational maps preserving the boundary. These have proved to be of crucial importance when dealing with slc pairs. For instance, the finiteness of the associated B-representations represents one of the fundamental results in the theory~\cite{FujinoAbundanceSLC,FG_fin_Brepr,Hacon_Xu_finBrepr_slc_abundance}, and led to the reduction of the abundance conjecture for slc pairs to the case of log canonical pairs.

\begin{definition}[Relative B-birational maps, {\cite[Definition 3.1]{FujinoAbundanceSLC}}]\label{def: B-birational maps}
    Let $(X,\Delta_X)$ and $(Y,\Delta_Y)$ be pairs of normal schemes together with $\QQ$-divisors, such that $K_X+\Delta_X$ and $K_Y+\Delta_Y$ are $\QQ$-Cartier. Let $f\colon X\rightarrow T$ and $g\colon Y\rightarrow T$ be proper morphisms to a scheme $T$. We say that a birational map $\sigma\colon (X,\Delta_X)\dashrightarrow(Y,\Delta_Y)$ over $T$ is \emph{B-birational} if there exists a common resolution
    \[
    \begin{tikzcd}
        & W\arrow[ld,"\alpha"']\arrow[rd,"\beta"]\\
        X\arrow[rr,dashed,"\sigma"] & & Y
    \end{tikzcd}
    \]
    such that
    \[
        \alpha^*(K_X+\Delta_X)=\beta^*(K_Y+\Delta_Y).
    \]
    We denote by $\Bir(X/T,\Delta_X)$ the group of B-birational self-maps of $(X,\Delta_X)$ over $T$, where the group operation is the composition.
\end{definition}

\begin{remark}\label{rmk: Bir is a crepant invariant}
    An important property of $\Bir(X,\Delta_X)$ is that it is a crepant birational invariant, in the following sense. Let $\psi\colon (X,\Delta_X)\rightarrow(Y,\Delta_Y)$ be a proper crepant birational morphism between normal pairs proper over $T$, then $\Bir(Y/T,\Delta_Y)\simeq\Bir(X/T,\Delta_X)$ via $g\mapsto\psi^{-1}\circ g\circ\psi$; see~\cite[Remark 2.12]{FG_fin_Brepr}.
    We will apply this observation when $(X,\Delta_X)$ is a minimal model over $T$, and $Y$ is its relative log canonical model.
\end{remark}

Another property of B-birational self-maps we will use is that they act on the spaces of pluri-canonical sections, thus on a suitable Veronese subalgebra of the log canonical ring. This follows immediately from the definition of B-birational self-maps over $T$, hence we omit the proof of the following lemma; see for instance~\cite[Definition 2.14]{FG_fin_Brepr}.

\begin{lemma}\label{lem: Bir acts on log canonical}
    Let $X$ be a normal scheme and $f\colon X\rightarrow T$ a proper morphism. Let $\Delta_X$ be a $\QQ$-divisor, and let $m$ be a non-negative integer such that $m(K_X+\Delta_X)$ is Cartier. Then, any B-birational map $\sigma\in\Bir(X/T,\Delta_X)$ over $T$ naturally induces a linear automorphism of $f_*\cO_{X}(m(K_X+\Delta_X))$.
    
    In particular, if $\ell$ is the smallest positive integer such that $\ell(K_X+\Delta_X)$ is Cartier, then $\Bir(X/T,\Delta_X)$ acts on $R(X/T,K_X+\Delta_X)^{(\ell)}$.
\end{lemma}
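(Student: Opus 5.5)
The plan is to reduce to the definition of B-birationality on a common resolution and then push sections back and forth. Fix $\sigma\in\Bir(X/T,\Delta_X)$ and choose a common resolution $\alpha,\beta\colon W\to X$ with $\alpha^*(K_X+\Delta_X)=\beta^*(K_X+\Delta_X)=:L_W$. Both sides are $\QQ$-Cartier pullbacks, so $mL_W$ is Cartier, and it equals $\alpha^*(m(K_X+\Delta_X))=\beta^*(m(K_X+\Delta_X))$ as Cartier divisors. We may assume $W$ is normal (take a resolution). Since $\alpha$ and $\beta$ are proper birational morphisms onto a normal scheme, $\alpha_*\cO_W=\cO_X$ and $\beta_*\cO_W=\cO_X$. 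The projection formula then gives canonical isomorphisms
\[
f_*\cO_X(m(K_X+\Delta_X))\xrightarrow{\ \alpha^*\ } (f\circ\alpha)_*\cO_W(mL_W)=(f\circ\beta)_*\cO_W(mL_W)\xleftarrow{\ \beta^*\ } f_*\cO_X(m(K_X+\Delta_X)),
\]
where the middle equality uses $f\circ\alpha=f\circ\beta$, which holds because $\sigma$ is a map over $T$. We define $\sigma^*:=(\beta^*)^{-1}\circ\alpha^*$, or its inverse depending on the convention for left actions. This is an $\cO_T$-linear automorphism.

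Next we check that $\sigma^*$ is independent of the choice of $W$. Any two common resolutions are dominated by a third, and pulling back along a further birational morphism $W'\to W$ is compatible with the projection-formula isomorphisms. The same domination argument shows functoriality: $(\sigma\tau)^*=\tau^*\sigma^*$. To see this, resolve the graphs of $\sigma$ and $\tau$ by a common $W''$ dominating both resolutions, and use that all the pulled-back divisors agree there. This gives a group action of $\Bir(X/T,\Delta_X)$ on $f_*\cO_X(m(K_X+\Delta_X))$.

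For the "in particular", apply the first part to every $m=\ell n$. Multiplicativity, $\sigma^*(st)=\sigma^*(s)\sigma^*(t)$, holds because pullback of sections of line bundles on $W$ commutes with products. Hence the action is by graded $\cO_T$-algebra automorphisms of $R^{(\ell)}$.

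The only point needing care is the independence of the resolution. Even there, the work is bookkeeping with the projection formula rather than a genuine obstacle.
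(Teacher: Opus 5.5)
Your strategy is the standard one the paper has in mind when it omits the proof and cites Fujino--Gongyo: pull back to a common resolution and use $\alpha_*\cO_W=\cO_X$ with the projection formula. However, one step does not hold as written. You assert that $\alpha^*(m(K_X+\Delta_X))=\beta^*(m(K_X+\Delta_X))$ \emph{as Cartier divisors}.

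In the definition of B-birational maps (Fujino's convention, which the paper follows), the equality $\alpha^*(K_X+\Delta_X)=\beta^*(K_X+\Delta_X)$ means the following. Write $K_W=\alpha^*(K_X+\Delta_X)+E=\beta^*(K_X+\Delta_X)+E'$; then one requires $E=E'$. Here the two canonical divisors of $X$ implicitly used are $\alpha_*K_W$ and $\beta_*K_W$, which are in general different Weil divisors. If you fix a single $K_X=\mathrm{div}(\eta)$, the two pullbacks differ by the principal divisor $\mathrm{div}_W(\alpha^*\eta/\beta^*\eta)$. This is nonzero already when $\sigma$ is an automorphism preserving $\Delta_X$ but moving the chosen divisor $K_X$; such a $\sigma$ is B-birational, as one sees by taking $W=X$.

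So your middle identification $\cO_W(\alpha^*mL)=\cO_W(\beta^*mL)$ is determined only up to a unit. Consequently naturality, independence of $W$, and the identity $(\sigma\tau)^*=\tau^*\sigma^*$ no longer follow from ``all pulled-back divisors agree''; you would get at best a projective action. If you read the definition literally with a fixed $K_X$, your argument does go through. But it then treats a smaller, choice-dependent group that in general misses automorphisms preserving $\Delta_X$.

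The repair is to make the identification canonical using rational pluricanonical forms. For $V\subset T$ open, identify $\H^0(f^{-1}V,\cO_X(m(K_X+\Delta_X)))$ with the rational $m$-canonical forms $\omega$ satisfying $\mathrm{div}_X(\omega)+m\Delta_X\geq 0$ over $f^{-1}V$. Then define $\sigma^*$ as pullback of forms, so that $\sigma^*=(\alpha^*)^{-1}\beta^*$ because $\beta=\sigma\circ\alpha$. Set $\Delta_W:=-E=-E'$. Then $\mathrm{div}_W(\alpha^*\omega)+m\Delta_W=\alpha^*(\mathrm{div}_X\omega+m\Delta_X)$, and the same identity holds with $\beta$. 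Since $\mathrm{div}_X\omega+m\Delta_X$ is linearly equivalent to $m(K_X+\Delta_X)$, it is Cartier. With $X$ normal, your argument then shows that $\alpha^*$ and $\beta^*$ both identify this space with the forms on $W$ satisfying $\mathrm{div}_W+m\Delta_W\geq 0$. Hence $\sigma^*$ preserves the space. Because pullback of forms is strictly functorial and multiplicative, three things become automatic: independence of $W$, the group-action property, and compatibility with the multiplication of $R^{(\ell)}$.
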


\begin{remark}\label{rmk: different invariants mu}
    Usually, we care about properties of $R_X=R(X/T,K_X+\Delta_X)$ that hold if and only if they are satisfied by some Veronese subalgebra, such as finite generation. Thus, the fact that $\Bir(X/T,\Delta_X)$ acts only on a Veronese subalgebra of $R_X$ does not represent an issue.
    However, there is a slight misalignment between the action defined in Lemma~\ref{lem: Bir acts on log canonical} and that in Lemma~\ref{lem: log canonical ring slc pair} in the case where $G=\langle\mu\rangle$ is the gluing involution associated to an slc pair. Indeed, in Lemma~\ref{lem: log canonical ring slc pair} the action is twisted by a factor of $(-1)^m$ in degree $m$. Hence the two actions differ only in odd degrees. By restricting to a Veronese subalgebra of even degree $\ell$, the twisting disappears. We will thus always take $\ell$ to be even, which justifies the following definition. Note also that we use different notation for the $\mu$-invariants under the ``twisted'' action, as $\mu$ is an element of $\langle\mu\rangle$ rather than the group itself.
\end{remark}

\begin{definition}\label{def: invariants under Bir}
    Let $(X,\Delta_X)$ be an lc pair, $f\colon X\rightarrow T$ a proper morphism, and $R_X:=R(X/T,K_X+\Delta_X)$. Given a finite subgroup $G\subseteq\Bir(X/T,\Delta_X)$, we denote by $R_X^G$ the subalgebra of $R_X^{(\ell)}$ of $G$-invariants, where $\ell$ is the smallest positive \emph{even} integer such that $\ell(K_X+\Delta_X)$ is Cartier.
\end{definition}

Since we only care about finite generation of algebras, we will not keep track of $\ell$ in the paper, in particular in Theorem~\ref{thm: equivariant sklt} and its proof.

\section{Good stable models of semi-log-canonical pairs}\label{sec: stable models}

In this section we introduce a notion of \emph{stable model} for slc pairs, slightly extending log canonical models to the non-normal case. We limit ourselves to treating the case where the exceptional locus of the map to the stable model does not contain any component of the conductor; for this reason, we call them \emph{good stable models}.
The general case is much more subtle, see for instance Example~\ref{exm: Proj RX not deminormal}.
We also provide examples of maps that should reasonably be considered stable models but fail to be good stable models, showing that the requirements~\eqref{property: defined up to codim 2} and~\eqref{property: birational conductor} in Definition~\ref{def: stable model slc pairs} are too restrictive in general.

\subsection{Definition and examples of good stable models}

A stable model is required to satisfy a large amount of properties, many of which will be proven to be redundant.

\begin{definition}[Stable Models]\label{def: stable model slc pairs}
    Let $(X,\Delta_X)$ be an slc pair, and let $(Y,\Delta_Y)$ be a demi-normal pair with $\QQ$-Cartier log canonical divisor $K_Y+\Delta_Y$. Let $X\rightarrow T$ be a proper morphism, and let
    \[
    \begin{tikzcd}
        \phi\colon X\arrow[r,dashed] & Y
    \end{tikzcd}
    \]
    be a rational map over $T$. We say that $\phi$ is a \emph{good stable model over $T$} if it satisfies the following properties:
    \begin{enumerate}
        \item\label{property: Y slc} $(Y,\Delta_Y)$ is slc;
        \item\label{property: ample} $K_Y+\Delta_Y$ is ample over $T$;
        \item\label{property: defined up to codim 2} $\phi$ is defined on an open $U\subset X$ with complement of codimension at least 2;
        \item\label{property: birational contraction} $\phi$ is a birational contraction, that is, $\mathrm{codim}_Y(\Ex(\phi^{-1}))\geq2$;
        \item\label{property: normalization lc model} the induced map between normalization pairs
        \[
        \begin{tikzcd}
            \phibar\colon (\Xbar,D_{\Xbar}+\Delta_{\Xbar})\arrow[r] & (\Ybar,D_{\Ybar}+\Delta_{\Ybar})
        \end{tikzcd}
        \]
        is a log canonical model over $T$;
        \item\label{property: pushforward normalization} $\phibar_*D_{\Xbar}=D_{\Ybar}$, and $\phibar_*\Delta_{\Xbar}=\Delta_{\Ybar}$;
        \item\label{property: pushforward} $\phi_*\Delta_X=\Delta_Y$.
        \item\label{property: birational conductor} the restriction of $\phi$ to $D_X$ defines a birational map $\phi|_{D_X}\colon D_X\dashrightarrow D_Y$. 
    \end{enumerate}
    When $X$ is normal, we call the stable model simply a log canonical model, as the two notions coincide.
\end{definition}

Note that the dependence on the base $T$ appears only in Properties~\eqref{property: ample} and~\eqref{property: normalization lc model}.

\begin{lemma}\label{lem: redundant def stable model}
    Let $(X,\Delta_X)$ be an slc pair proper over a scheme $T$, and let $(Y,\Delta_Y)$ be a demi-normal pair over $T$ with $\QQ$-Cartier log canonical divisor $K_Y+\Delta_Y$. Let $\phi\colon X\dashrightarrow Y$ be a rational map over $T$ defined on an open subscheme of $X$ with complement of codimension at least 2. We refer to Definition~\ref{def: stable model slc pairs} for the properties.
    \begin{enumerate}[label=$(\alph*)$]
        \item\label{point: equivalence 1 and 2} Property~\eqref{property: pushforward normalization} implies~\eqref{property: pushforward}, and the converse holds if we assume~\eqref{property: birational contraction},~\eqref{property: normalization lc model} or~\eqref{property: birational conductor}.
        \item Property~\eqref{property: normalization lc model} implies~\eqref{property: Y slc} and~\eqref{property: ample}. If we assume property~\eqref{property: pushforward normalization}, then also~\eqref{property: birational contraction} follows.
        \item If the properties~\eqref{property: normalization lc model} and~\eqref{property: birational conductor} hold, then $\phi$ is a good stable model.
        \item If $\phi$ is a good stable model, then for every divisor $E$ over $X$, we have that
        \begin{equation}\label{eq: inequality discrepancies}
            a(E,X,\Delta_X)\leq a(E,Y,\Delta_Y).
        \end{equation}
        Conversely, if inequality \eqref{eq: inequality discrepancies} holds for the $\phi$-exceptional divisors and properties~\eqref{property: Y slc}, \eqref{property: ample}, \eqref{property: birational contraction}, \eqref{property: pushforward} and \eqref{property: birational conductor} are satisfied, then $\phi$ is a good stable model.
    \end{enumerate}
\end{lemma}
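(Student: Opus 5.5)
The plan is to reduce everything to the normalization, where each property becomes a statement about lc models, and then descend along the normalization maps using the fact that $\pi_X\colon\Xbar\to X$ and $\pi_Y\colon\Ybar\to Y$ are isomorphisms away from the conductors and finite everywhere. Since $\phi$ is defined in codimension 2 on $X$, the map $\phibar$ is well defined and $\pi_Y\circ\phibar=\phi\circ\pi_X$ wherever both sides make sense.

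For \ref{point: equivalence 1 and 2}, push forward $\Delta_{\Xbar}$ and $D_{\Xbar}$ componentwise. Components of $\Delta_X$ are not contained in $D_X$, so $\phibar_*\Delta_{\Xbar}=\Delta_{\Ybar}$ maps to $\phi_*\Delta_X=\Delta_Y$ under $\pi_Y$, giving \eqref{property: pushforward normalization}$\Rightarrow$\eqref{property: pushforward}. For the converse, each of \eqref{property: birational contraction}, \eqref{property: normalization lc model} and \eqref{property: birational conductor} rules out the failure mode in which a component of $\Delta_X$ maps into $D_Y$, or a component of $D_X$ maps onto a non-conductor divisor. Concretely:
\begin{itemize}
\item under \eqref{property: birational conductor} this is immediate;
\item under \eqref{property: birational contraction} or \eqref{property: normalization lc model}, the divisors on $\Ybar$ are exactly the non-contracted divisors of $\Xbar$. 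A divisor of $\Delta_{\Xbar}$ with coefficient $<1$ cannot become a nodal divisor, since there the coefficient is $1$ in $D_{\Ybar}$. Coefficient-one components need a slightly finer check on the double-cover structure, using that $D_{\Ybar}\to D_Y$ is generically $2:1$ while $\phi$ is birational on components.
\end{itemize}

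For (b), $(\Ybar,D_{\Ybar}+\Delta_{\Ybar})$ being an lc model gives lc singularities and $\pi_Y^*(K_Y+\Delta_Y)=K_{\Ybar}+D_{\Ybar}+\Delta_{\Ybar}$ ample over $T$. Since $\pi_Y$ is finite, ampleness descends, and slc is by definition lc on the normalization, so \eqref{property: Y slc} and \eqref{property: ample} follow. An lc model is a birational contraction, so $\phibar^{-1}$ contracts no divisor. Together with \eqref{property: pushforward normalization}, which gives the correct boundary correspondence, this shows that $\phi^{-1}$ extracts no divisor, i.e.\ \eqref{property: birational contraction}.

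For (c), combine (a) and (b). Property \eqref{property: normalization lc model} gives \eqref{property: Y slc}, \eqref{property: ample} and, via (a) with \eqref{property: birational conductor}, also \eqref{property: pushforward normalization} and \eqref{property: pushforward}; then (b) gives \eqref{property: birational contraction}.

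For (d), discrepancies of an slc pair are computed on the normalization pair, so $a(E,X,\Delta_X)=a(E,\Xbar,D_{\Xbar}+\Delta_{\Xbar})$ for $E$ over a component. The inequality is the standard negativity-lemma property of lc models applied to $\phibar$, which is allowed by \eqref{property: normalization lc model}. Conversely, from \eqref{property: pushforward}, \eqref{property: birational conductor} and \eqref{property: birational contraction} we get \eqref{property: pushforward normalization} by (a). Then $\phibar$ is a birational contraction with $K_{\Ybar}+D_{\Ybar}+\Delta_{\Ybar}$ ample over $T$, lc, and satisfying the discrepancy inequality on exceptional divisors, which is the standard characterization of the lc model (e.g.\ \cite{Kollar_Mori_1998}, 3.52-type argument via negativity on a common resolution). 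This gives \eqref{property: normalization lc model}, and (c) concludes.

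The main obstacle I expect is the converse in (a) without \eqref{property: birational conductor}: one has to show carefully that the conductor is preserved, and this is where the coefficient-one bookkeeping has to be done honestly.
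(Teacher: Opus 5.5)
You handle (b), (d) and the forward implication in (a) the same way the paper does. The gap is the converse in (a) under \eqref{property: normalization lc model}, which you flag yourself, and the mechanism you propose cannot close it. You claim \eqref{property: normalization lc model} rules out a component of $\Delta_X$ mapping onto a component of $D_Y$, and you plan to treat coefficient-one components using the $2:1$ structure of $D_{\Ybar}\to D_Y$ and the birationality of $\phi$. Here is a counterexample to that plan. Take $Y=Z\cup_C Z$ as in Example~\ref{exm: bad conductor maps}\eqref{conductor example 2}, $X=\Ybar=Z\sqcup Z$, $\Delta_X=C\sqcup C$, and $\phi=\pi_Y$. Then $\phi$ is birational, $D_{\Ybar}\to D_Y$ is $2:1$, and $\phibar=\mathrm{id}$ is a log canonical model of $(Z\sqcup Z,C\sqcup C)$, so \eqref{property: normalization lc model} holds. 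Yet coefficient-one components of $\Delta_X$ map onto $D_Y$, and $\phibar_*D_{\Xbar}=0\neq D_{\Ybar}$. No bookkeeping with those ingredients can yield \eqref{property: pushforward normalization}.

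The missing idea is to use the hypothesis \eqref{property: pushforward} directly. Since $\Delta_Y$ is a $\QQ$-divisor on the demi-normal scheme $Y$, none of its components is a component of $D_Y$. Hence $\phi_*\Delta_X=\Delta_Y$ forbids every non-contracted component of $\Delta_X$, whatever its coefficient, from mapping onto $D_Y$; this is exactly what fails in the example. Now combine $\phibar_*(D_{\Xbar}+\Delta_{\Xbar})=D_{\Ybar}+\Delta_{\Ybar}$ with the fact that $\phibar^{-1}$ extracts no divisors. Together they force every component of $D_{\Ybar}$ to be the image of a component of $D_{\Xbar}$, which gives \eqref{property: pushforward normalization}.

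There are two smaller problems. First, under \eqref{property: birational contraction} alone there is no relation between the coefficients of $D_{\Xbar}+\Delta_{\Xbar}$ and $D_{\Ybar}+\Delta_{\Ybar}$, so your coefficient comparison is not available there. Instead, as the paper does, use that $\phi^{-1}$ is a local isomorphism at every codimension-one point of $Y$. Then nodal codimension-one points of $Y$ correspond to nodal ones of $X$, giving $\phi^{-1}_*D_Y\subset D_X$ and $\phi_*D_X=D_Y$.

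Second, in (c), getting \eqref{property: pushforward normalization} ``via (a) with \eqref{property: birational conductor}'' is circular. The converse in (a) takes \eqref{property: pushforward} as input, and \eqref{property: pushforward} is not a hypothesis of (c). Instead, subtract $\phibar_*D_{\Xbar}=D_{\Ybar}$ (from \eqref{property: birational conductor}) from $\phibar_*(D_{\Xbar}+\Delta_{\Xbar})=D_{\Ybar}+\Delta_{\Ybar}$ (from \eqref{property: normalization lc model}) to obtain \eqref{property: pushforward normalization}. Then deduce \eqref{property: pushforward} and \eqref{property: birational contraction} from (a) and (b), which is what the paper does.
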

\begin{proof}
    We prove each point separately.
    \begin{enumerate}[label=$(\alph*)$]
        \item The first part follows from the fact that $\Delta_X$ and $\Delta_Y$ are birational to $\Delta_{\Xbar}$ and $\Delta_{\Ybar}$, respectively. For the second part, note that Property~\eqref{property: normalization lc model} implies $\phibar_*(D_{\Xbar}+\Delta_{\Xbar})=D_{\Ybar}+\Delta_{\Ybar}$, while Property~\eqref{property: birational conductor} implies $\phibar_*D_{\Xbar}=D_{\Ybar}$ directly. If~\eqref{property: birational contraction} holds, then $\phi^{-1}$ is an isomorphism at any codimension 1 point, hence $\phi_*^{-1}D_Y\subset D_X$ and $\phi_*D_X=D_Y$, hence again $\phibar_*D_{\Xbar}=D_{\Ybar}$.
        \item Properties~\eqref{property: Y slc} and~\eqref{property: ample}  follow immediately by~\cite[Definition-Lemma 5.10]{Kollar-singularities} and the fact that a Cartier divisor on $Y$ is relatively ample if and only if its pullback to $\Ybar$ is relatively ample. Property~\eqref{property: birational contraction} follows from the fact that the normalization is finite and $\phi_*D_{\Xbar}=D_{\Ybar}$, so that no component of $D_Y$ is in $\Ex(\phi^{-1})$.
        \item By the previous point, it is enough to show that under these assumptions~\eqref{property: pushforward normalization} and~\eqref{property: pushforward} are satisfied. By the first point of this lemma, it is enough to show that $\phibar_*D_{\Xbar}=D_{\Ybar}$, which in turn follows from the birationality of $\phi|_{D_X}\colon D_X\dashrightarrow D_Y$.
        \item The inequality between discrepancies follows from the analogous one at the level of normalization pairs \cite[Definition 3.50, Proposition 3.51]{Kollar_Mori_1998}. The converse holds by definition of log canonical model, see loc.\! cit.
    \end{enumerate}
\end{proof}

By Lemma~\ref{lem: redundant def stable model}, a more `efficient' set of properties characterizing stable models are~\eqref{property: defined up to codim 2}, \eqref{property: normalization lc model}, and~\eqref{property: birational conductor}. It can be convenient to define good stable models intrinsically without referring to the normalization, in which case Lemma~\ref{lem: redundant def stable model} shows that we can replace the requirement of $\phibar$ being a log canonical model with the inequality~\eqref{eq: inequality discrepancies} for $\phi$-exceptional divisors, and disregard \eqref{property: pushforward normalization}. Contrarily to what happens for normal schemes, the property of $\phi$ being regular on an open subset $U$ with complement of codimension at least 2 is not automatically satisfied, as $\phi$ could fail to be regular at the generic points of the conductor. Note that the birationality of $D_X\dashrightarrow D_Y$ implicitly requires $\phi$ to be defined at the generic points of $D_X$.

\begin{example}\label{exm: bad conductor maps}
    Let us present some examples of morphisms $\phi$ that should reasonably be considered to be stable models even if they fail to be \emph{good} stable models. The examples will fail properties~\eqref{property: defined up to codim 2} or~\eqref{property: birational conductor}. Here, the base scheme is $T=\Spec\CC$.
    \begin{enumerate}
        \item\label{not defined codim 1 example 1} We present a simple example where $\phi$ is not defined at the generic point of the conductor. Let $S$ be a smooth surface with $K_S$ ample. Let $p\in S$, and $\Bl_{p}S$ the blowup of $S$ along $p$, with exceptional $(-1)$-curve $E$. Let $X=\Bl_{p}S\cup_{E}\Bl_{p}S$ be the union of two copies of $S_1$ glued along $E$, thus $D_X=E$. Note that $X$ is simple normal crossing, and $K_X$ is big on $X$ as its pullback on each irreducible component $\Bl_{p}S$ is $K_{\Bl_{p}S}+E$. Note that $(K_{\Bl_{p}S}+E)|_E\simeq K_E\simeq\cO_{\PP^1}(-2)$, hence $R_X\simeq R_S\times R_S$. It follows that $R_X$ is finitely generated and $\phi\colon X\dashrightarrow \Proj R_X\simeq S\sqcup S$ satisfies all the properties of Definition~\ref{def: stable model slc pairs}, except for~\eqref{property: defined up to codim 2} and~\eqref{property: birational conductor}.
        \item\label{conductor example 1}
        Let $S$ and $p\in S$ be as in the previous example. Let $X$ be the variety obtained by gluing $\Bl_{p}S$ along an involution $\alpha$ of the exceptional divisor $E\simeq\PP^1$ with a unique fixed point.
        Note that $X$ is slc and Gorenstein, as it has only nodal singularities or pinch points along $D_X$. The induced birational morphism $\phi\colon X\rightarrow S$ automatically satisfies properties~\eqref{property: defined up to codim 2} and \eqref{property: pushforward} of Definition~\ref{def: stable model slc pairs}. Moreover, the normalization pair of $X$ is $(\Xbar=\Bl_{p}S,E)$, while $S$ is smooth, hence~\eqref{property: normalization lc model} is also satisfied. Therefore, $\phi$ has all the properties of a good stable model, except for~\eqref{property: birational conductor}.
        \item\label{conductor example 2} Let $Z$ be a smooth projective surface, and $C\subset Z$ a smooth curve with $K_Z+C$ ample. Let $Y$ be the union of two copies of $Z$ along $C$. Note that $Y$ is a stable slc pair with only SNC singularities and conductor $C$. Let $p\in C\subset Y$ and $X_0=\Bl_pY$. Then, $X_0=\Bl_pZ\cup_{C'}\Bl_pZ$, where $C'$ is the strict transform of $C$; note that $C'\simeq C$. Let $X_1$ be the scheme obtained by gluing $X_0$ along the two exceptional divisors $E\subset Z$, whose image in $X_1$ is still denoted by $E$. Then, $(X_1,0)$ is slc with conductor $E\cup C'$. Let $\phi_i\colon X_i\rightarrow Y$ be the induced morphisms. Then, a simple computation shows that both $\phi_0$ and $\phi_1$ satisfy all properties of Definition~\ref{def: stable model slc pairs} except that $\phi_1$ fails~\eqref{property: birational conductor}. Note that $\phi_1$ contracts the component $E$ of the conductor, even though it restricts surjectively to $D_{X_1}=E\cup C'\rightarrow C=D_Y$.
    \end{enumerate}
\end{example}

\begin{remark}\label{rmk: non stable models}
    Let $(X,\Delta_X)$ be a non-normal slc pair with ample log canonical divisor. Then, the normalization map $\pi_X\colon\Xbar\longrightarrow X$ is not a stable model for any boundary on $\Xbar$, as it fails properties \eqref{property: birational contraction} and \eqref{property: pushforward normalization} of Definition~\ref{def: stable model slc pairs}. Similarly, the inverse $\pi_X^{-1}\colon X\dashrightarrow\Xbar$ of the normalization map fails properties \eqref{property: birational contraction} and \eqref{property: pushforward normalization} of Definition~\ref{def: stable model slc pairs}. It also fails~\eqref{property: defined up to codim 2}, but it satisfies property~\eqref{property: pushforward}.
\end{remark}

\begin{remark}\label{rmk: general definition stable model}
    Example~\ref{exm: bad conductor maps} shows that to obtain a possible general notion of stable model we need to remove condition~\eqref{property: defined up to codim 2}, and of course~\eqref{property: birational conductor}. Remark~\ref{rmk: non stable models} suggests that all other properties are to be maintained.
    We omit a further discussion on this, as it is beyond the scope of this work.
\end{remark}

\subsection{Good stable models as Proj of log canonical algebras}

We prove that good stable models are unique, and coincide with the Proj of the log canonical algebra.

\begin{proposition}\label{prop: Y=proj RX good stable model}
    Let $\phi\colon X\dashrightarrow Y$ be a good stable model over a scheme $T$. Then, $\phibar|_{D_{\Xbar}^n}\colon D_{\Xbar}^n\dashrightarrow D_{\Ybar}^n$ induces a natural injection $R_{D_{\Ybar}^n}\hookrightarrow R_{D_{\Xbar}^n}$ between log canonical algebras, that is equivariant with respect to the gluing involutions and makes
    \begin{equation}\label{diag: commutative restriction}
    \begin{tikzcd}
        R_{D_{\Xbar}^n}\arrow[r,hookleftarrow] & R_{D_{\Ybar}^n}\\
        R_{\Xbar}\arrow[u,"\res_n"] & R_{\Ybar}\arrow[l,"\simeq"]\arrow[u,"\res_n"]
    \end{tikzcd}
    \end{equation}
    commutative.
    In particular, $R_Y\simeq R_X$ and the good stable model $Y\simeq\Proj R_X$ is unique.
\end{proposition}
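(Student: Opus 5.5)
The plan is to work first on the normalizations, then restrict to the conductors, and finally descend to $X$ and $Y$ using Lemma~\ref{lem: log canonical ring slc pair}.

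\textbf{Step 1: the bottom isomorphism.} By property~\eqref{property: normalization lc model}, $\phibar\colon(\Xbar,D_{\Xbar}+\Delta_{\Xbar})\dashrightarrow(\Ybar,D_{\Ybar}+\Delta_{\Ybar})$ is a log canonical model over $T$. Take a common resolution $p\colon W\to\Xbar$, $q\colon W\to\Ybar$. The standard negativity argument \cite[Definition 3.50, Proposition 3.51]{Kollar_Mori_1998} gives
\[
p^*(K_{\Xbar}+D_{\Xbar}+\Delta_{\Xbar})=q^*(K_{\Ybar}+D_{\Ybar}+\Delta_{\Ybar})+F,
\]
with $F\geq0$ and $q$-exceptional. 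Since $\phibar$ is a birational contraction, it follows that pulling back sections gives an isomorphism $R_{\Ybar}\xrightarrow{\simeq}R_{\Xbar}$. This is the bottom arrow of~\eqref{diag: commutative restriction}.

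\textbf{Step 2: the top arrow and commutativity.} By property~\eqref{property: defined up to codim 2}, $\phi$ is defined at the generic points of $D_X$. By~\eqref{property: birational conductor}, $\phi|_{D_X}$ is birational onto $D_Y$. Passing to normalizations, $\phibar$ is defined at the generic points of $D_{\Xbar}$ and restricts to a birational map $D_{\Xbar}^n\dashrightarrow D_{\Ybar}^n$. Choose $W$ so that the strict transform $D_W$ of $D_{\Xbar}$ is smooth with morphisms $D_W\to D_{\Xbar}^n$ and $D_W\to D_{\Ybar}^n$. Restricting the displayed formula to $D_W$ and applying adjunction gives
\[
p^*(K_{D_{\Xbar}^n}+\Diff)=q^*(K_{D_{\Ybar}^n}+\Diff)+F|_{D_W}.
\]
Here $F|_{D_W}\ge0$, because $D_W$ is not contained in $\Supp F$: by birationality on the conductor, no component of $D_{\Xbar}$ is $\phibar$-exceptional. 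Hence pullback of sections defines an injection $R_{D_{\Ybar}^n}\hookrightarrow R_{D_{\Xbar}^n}$. This map is injective since it is birational pullback on each component. Commutativity of~\eqref{diag: commutative restriction} holds because the residue map is computed at the generic points of the conductor, where $\phibar$ is an isomorphism.

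\textbf{Step 3: equivariance with respect to the gluing involutions.} The involutions are determined generically by the double covers $D_{\Xbar}\to D_X$ and $D_{\Ybar}\to D_Y$. Since $\phi$ is a local isomorphism near the generic points of $D_X$, it intertwines these covers, so $\mu_Y\circ\phibar=\phibar\circ\mu_X$ as birational maps on $D^n$. Pullback therefore commutes with the (twisted) $\mu$-actions.

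\textbf{Step 4: conclusion.} By Lemma~\ref{lem: log canonical ring slc pair},
\[
R_X=\res_n^{-1}(R_{D_{\Xbar}^n}^{\mu}),\qquad R_Y=\res_n^{-1}(R_{D_{\Ybar}^n}^{\mu}).
\]
Under the bottom isomorphism, $R_Y$ maps into $R_X$ by Steps 2 and 3.

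The main obstacle is the converse inclusion. Suppose $s\in R_{\Ybar}$ has $\mu$-invariant residue in $R_{D_{\Xbar}^n}$. We need its residue in $R_{D_{\Ybar}^n}$ to be $\mu$-invariant. This holds because the top map is injective and equivariant: if $\res(s)$ and $\mu\cdot\res(s)$ have the same image, they are equal. Hence $R_X\simeq R_Y$. Finally, since $K_Y+\Delta_Y$ is $\QQ$-Cartier and ample over $T$ by~\eqref{property: ample}, we get $Y\simeq\Proj R_Y\simeq\Proj R_X$. Uniqueness of the good stable model follows from this description.
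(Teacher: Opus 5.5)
Your proof is correct and follows essentially the same route as the paper. The paper also takes a common resolution and restricts the effective $q$-exceptional divisor to the strict transform of the conductor to get the injective pullback $R_{D_{\Ybar}^n}\hookrightarrow R_{D_{\Xbar}^n}$, deduces $\mu$-equivariance from $\phi$ being a local isomorphism at the generic points of $D_X$, and concludes with Lemma~\ref{lem: log canonical ring slc pair}. Your Step~4 usefully spells out the reverse inclusion (injectivity plus equivariance of the top arrow), which the paper leaves implicit, and your generic-point check of commutativity stands in for the paper's citation of \cite[Lemma 2.19]{Kollar-singularities}.
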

\begin{proof}
    To simplify the notation, let us assume $T$ to be a point; the general case is analogous.
    
    Let $\Xbar\xleftarrow{p}W\xrightarrow{q}\Ybar$ be a resolution of $\phibar$, and let
    \[
        E=p^*(K_{\Xbar}+D_{\Xbar}+\Delta_{\Xbar})-q^*(K_{\Ybar}+D_{\Ybar}+\Delta_{\Ybar}),
    \]
    which is effective and $q$-exceptional by the properties of log canonical models. Let $Z\subset W$ be the strict transform of $D_{\Xbar}$ along $p$, which coincides with the strict transform of $D_{\Ybar}$ along $q$ by Definition~\ref{def: stable model slc pairs}\eqref{property: birational conductor}. Let $Z^n$ be the normalization of $Z$. We obtain the following commutative diagram:
    \begin{equation}
    \begin{tikzcd}
        & Z^n\arrow[dd]\arrow[ld,"p_D"']\arrow[rd,"q_D"]\\
        D_{\Xbar}^n\arrow[dd]\arrow[rr,dashed, crossing over] & & D_{\Ybar}^n\arrow[dd]\\
        & W\arrow[ld,"p"']\arrow[rd,"q"]\\
        \Xbar\arrow[rr,dashed] & & \Ybar
    \end{tikzcd}
    \end{equation}
    Since $K_{\Xbar}+D_{\Xbar}+\Delta_{\Xbar}$ and $K_{\Ybar}+D_{\Ybar}+\Delta_{\Ybar}$ are $\mathbb{Q}$-Cartier, pulling them back to $W$ and restricting to $Z^n$ yields the same result as taking the different first and then pulling back to $Z^n$. Restricting the divisor equality for $E$ to $Z^n$ thus yields:
    \[
        p_D^*(K_{D_{\Xbar}^n}+\Diff_{D_{\Xbar}^n}(\Delta_{\Xbar})) = q_D^*(K_{D_{\Ybar}^n}+\Diff_{D_{\Ybar}^n}(\Delta_{\Ybar})) + E_Z,
    \]
    where $E_Z := E|_{Z^n}$ remains effective but is no longer $q_D$-exceptional in general. 
    
    Since $\phi$ is a good stable model, the map between conductors is birational, hence $p_D$ and $q_D$ are birational morphisms between normal schemes. Because $E_Z\geq 0$, taking global sections for all sufficiently divisible powers $m\geq 0$ yields a natural inclusion
    \begin{align*}
        \H^0(D_{\Ybar}^n, m(K_{D_{\Ybar}^n}+\Diff_{D_{\Ybar}^n}(\Delta_{\Ybar}))) & \simeq\H^0(Z^n, m q_D^*(K_{D_{\Ybar}^n}+\Diff_{D_{\Ybar}^n}(\Delta_{\Ybar})))\\
        & \subseteq\H^0(Z^n, m p_D^*(K_{D_{\Xbar}^n}+\Diff_{D_{\Xbar}^n}(\Delta_{\Xbar})))\\
        & \simeq\H^0(D_{\Xbar}^n, m(K_{D_{\Xbar}^n}+\Diff_{D_{\Xbar}^n}(\Delta_{\Xbar}))).
    \end{align*}
    Summing over all $m$ and applying~\cite[Lemma 2.19]{Kollar-singularities}, we obtain the desired injection of graded rings $R_{D_{\Ybar}^n}\hookrightarrow R_{D_{\Xbar}^n}$, which makes diagram~\eqref{diag: commutative restriction} commute. The inclusion is $\mu$-equivariant, since the birational map $D_{\Xbar}^n\dashrightarrow D_{\Ybar}^n$ is $\mu$-equivariant, being induced by $\phi\colon X\dashrightarrow Y$.

    The global isomorphism $R_X\simeq R_Y$ follows from diagram~\eqref{diag: commutative restriction} and Lemma~\ref{lem: log canonical ring slc pair}.
    Since $K_Y+\Delta_Y$ is ample by Definition~\ref{def: stable model slc pairs}\eqref{property: ample}, we conclude that $Y\simeq\Proj R_Y\simeq\Proj R_X$.
\end{proof}

Let $(X,\Delta_X)$ be an slc pair, suppose the log canonical ring $R_X$ to be finitely generated, and suppose that $X\dashrightarrow Y:=\Proj R_X$ is birational. Let $\Delta_Y=\phi_*\Delta_X$. If $X$ is normal, then it is well-known (\cite{Reid-canonical} or~\cite[Theorems 1.15, 1.26]{Kollar-singularities}) that $\Proj R_X$ is the log canonical model of $X$.
However, for an slc pair it is not even clear that $Y:=\Proj R_X$ is demi-normal, which indeed fails by Example~\ref{exm: Proj RX not deminormal}. Even then, the relationship between the finite generation of $R_X$ and $R_{\Xbar}$ is somewhat obscure, as the extension $R_X\hookrightarrow R_{\Xbar}$ is not always finite. Indeed, there are cases where $R_{\Xbar}$ is a finitely generated ring but $R_X$ is not, even when $X$ is a normal-crossing surface, see~\cite{Kollar_examples_SNC}.

We prove that $\Proj R_X$ is a good stable model if the map $X\dashrightarrow\Proj R_X$ is regular in codimension 1 and does not contract any component of the conductor of $X$.

\begin{theorem}\label{thm: projRX is stable model}
    Let $(X,\Delta_X)$ be an slc pair proper over a semi-normal scheme $T$, such that each irreducible component $X_i$ of $X$ dominates some irreducible component $T_i$ of $T$ (not necessarily distinct). Suppose that the $\cO_T$-algebra $R_X:=R(X/T,K_X+\Delta_X)$ is finitely generated, and that $K_{X_i}+\Delta_{X_i}$ on $X_i$ is relatively big over $T_i$ for all $i$. Suppose further that the induced map $\phi\colon X\dashrightarrow\Proj R_X$ is regular in codimension 1 and does not contract any component of the conductor $D_X$.
    
    Then, $\phi$ is the unique good stable model of $X$ over $T$, and it induces an identification of log canonical algebras.
    Moreover, if $K_X+\Delta_X$ is relatively nef over $T$, then $\phi$ is a morphism.
    
    In particular, $R_{\Xbar}:=R(\Xbar/T,K_{\Xbar}+D_{\Xbar}+\Delta_{\Xbar})$ is also finitely generated and the induced map $\Proj R_{\Xbar}\dashrightarrow\Proj R_X$ is a morphism that coincides with the normalization of $\Proj R_X$.
\end{theorem}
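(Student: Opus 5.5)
We set $Y:=\Proj R_X$ and $\Delta_Y:=\phi_*\Delta_X$. The plan is first to show that $Y$ is demi-normal, then to verify properties \eqref{property: defined up to codim 2}, \eqref{property: normalization lc model} and \eqref{property: birational conductor} of Definition~\ref{def: stable model slc pairs}. By Lemma~\ref{lem: redundant def stable model}(c) this makes $\phi$ a good stable model, and Proposition~\ref{prop: Y=proj RX good stable model} then gives uniqueness and $R_Y\simeq R_X$.

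Property \eqref{property: defined up to codim 2} is the hypothesis that $\phi$ is regular in codimension 1. For the remaining properties we pass to a Veronese subalgebra $R_X^{(\ell)}$ generated in degree one, so that $Y=\Proj R_X^{(\ell)}$ carries an ample $\cO_Y(1)$. On the open set $U\subset X$ where $\phi$ is defined (its complement has codimension $\geq 2$), we have $\phi^*\cO_Y(1)=\cO_U(\ell(K_X+\Delta_X))-F$, where $F\ge0$ is the fixed part.

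To compare with the normalization, we use bigness on each $X_i$. Since $R_X\hookrightarrow R_{\Xbar}$ and $K+\Delta$ is big on each component, $\phi$ is birational on each component. Consider the normal variety $Y'$ obtained as the normalization of $Y$; then $\Xbar\dashrightarrow Y'$ is birational. The key claim is that $R_{\Xbar}$ is finitely generated with $\Proj R_{\Xbar}=Y'$.

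To prove the claim, take a common resolution $W\to\Xbar$, $W\to Y'$. We write $p^*(K_{\Xbar}+D_{\Xbar}+\Delta_{\Xbar})=q^*H+F_W$, where $H$ is the pullback of $\frac1\ell\cO_Y(1)$ and $F_W\ge0$ is the fixed part. Over $T$, sections of $m(K_{\Xbar}+D+\Delta)$ should all come from $Y'$; I expect this to follow from the equality $R_{\Xbar}=R(Y'/T,H)$. Indeed $R(Y'/T,H)$ is the integral closure of $R_X^{(\ell)}$, hence finite over $R_X^{(\ell)}$ and finitely generated. Moreover every section $s\in R_{\Xbar}$ is integral over $R_X$: since $H$ is ample on $Y'$ and $K+D+\Delta$ is big, the quotient $s$ of sections lies in the fraction field, and its valuative control along $F_W$ gives integrality.

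\textbf{This integrality step is the main obstacle.} It is exactly where the hypotheses enter: the conductor is not contracted and $\phi$ is regular in codimension 1. These ensure that no divisor of $\Xbar$ lying over $D_X$ lies in the stable base locus in a way that breaks integrality. What I would try first is to show that $F_W$ is $q$-exceptional. On $U$, any component of $F$ not contracted by $\phi$ would give a divisor on $Y$ along which $\cO_Y(1)$ fails to be the pushforward of $\ell(K+\Delta)$; since $\phi_*$ preserves sections in codimension one, this is impossible. Once $F_W$ is $q$-exceptional, the negativity lemma yields that $(Y',\phi'_*(D+\Delta))$ is the lc model of $(\Xbar,D+\Delta)$ over $T$, with $K_{Y'}+\phi'_*(D+\Delta)=H$ ample.

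With the lc model $Y'\to Y$ finite and birational in hand, we show that $Y$ is $S_2$ and nodal in codimension 1. Since $\phi$ does not contract conductor components and is an isomorphism at codimension-one points of $U$, $Y$ is nodal in codimension 1 at the images of the generic points of $D_X$. For $S_2$, we use that $R_X=\res_n^{-1}(R^\mu_{D^n})$ (Lemma~\ref{lem: log canonical ring slc pair}) is defined by codimension-one conditions, so the section algebra is reflexive. This identifies $Y'$ with the normalization and $D_{Y'}=\phi'_*D_{\Xbar}$, and the image of $D_X$ is birational onto $D_Y$, giving \eqref{property: birational conductor}. Finally, if $K_X+\Delta_X$ is nef over $T$, then its pullback to $\Xbar$ is nef and $F_W=0$ by the negativity lemma. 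So $\Xbar\to Y'$ is a morphism, and it descends to $X\to Y$ because it is compatible with $\mu$ and $Y$ is obtained as the $\mu$-quotient gluing; this proves the last assertion.
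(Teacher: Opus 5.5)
You have the right ingredients (compare with the normalization, show the fixed part is contracted, use negativity in the nef case). However, the two steps that carry the proof are not done, and you run them in an order that cannot work.

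The first missing piece is demi-normality of $Y=\Proj R_X$, together with the fact that $\phi$ is a local isomorphism over every codimension-one point of $Y$. Your nodal argument says $\phi$ is an isomorphism at codimension-one points of $U$. That is false, since $\phi$ may contract divisors contained in $U$. It also does not exclude two codimension-one points of $X$ being identified in $Y$, or a non-contracted divisor mapping non-birationally onto its image. Your $S_2$ argument (``$R_X$ is defined by codimension-one conditions, so the section algebra is reflexive'') says nothing about the local rings of $\Proj R_X$.

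The idea you are missing is the one in Proposition~\ref{prop: properties of Proj deminormal}. First replace $X$ by the demi-normal modification of a partial resolution of the graph, so that $\phi$ is a morphism and $|L|=\phi^*|H|+F$; then $\H^0(Y,mH)=\H^0(X,mL)$. Hence for any factorization $X\to Y''\xrightarrow{\psi}Y$ with $\psi$ finite, the inclusions $\H^0(Y,mH)\subseteq\H^0(Y'',m\psi^*H)\subseteq\H^0(X,mL)$ are equalities, which forces $Y''=Y$ because $H$ is ample. This gives $\phi_*\cO_X=\cO_Y$. So each codimension-one point of $Y$ has a unique codimension-one preimage with the same residue field, $\phi$ is a local isomorphism there, and $Y$ is nodal in codimension one. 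Applying the same trick to the demi-normal modification $\widetilde Y\to Y$ (Lemma~\ref{lem: universal property deminormal modification}) shows $Y$ is demi-normal, with $\phi_*D_X=D_Y$ and $\phi^{-1}_*D_Y=D_X$.

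The second missing piece is that the fixed part is $\phi$-exceptional. You rightly call this the crux, but you only justify it by a slogan. The contradiction has to be produced inside $R_X$, since extra sections of $R_{\Xbar}$ contradict nothing, so the argument must be run on $X$ and $Y$. If $\phi_*F\neq0$, ampleness of $H$ gives sections of $\cO_Y(mH+m\phi_*F)$ not in $\H^0(Y,mH)$. Multiply them by a section of some $\cO_Y(kH)$ vanishing to high order along the images of the $\phi$-exceptional divisors. They then pull back to elements of $R_X$ not coming from $Y$, which is the contradiction. This needs $Y$ demi-normal and $\phi$ a local isomorphism at the generic point of the relevant component beforehand. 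You instead derive demi-normality afterwards, from the lc-model statement, which itself rests on this claim.

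The $X$-level equality $\phi_*F=0$ is also what yields $\ell(K_Y+\Delta_Y)\sim\phi_*L\sim H$, i.e.\ that $K_Y+\Delta_Y$ is $\QQ$-Cartier on the non-normal $Y$. This is a standing hypothesis of Lemma~\ref{lem: redundant def stable model}. It does not follow from $\QQ$-Cartierness of the pullback to $Y'$.

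Two smaller points:
\begin{itemize}
\item Birationality of $\phi$ on each $X_i$ does not follow from $R_X\subset R_{\Xbar}$ plus bigness. You need that the gluing conditions in $\bigoplus_{j\neq i}\H^0(X_i\cap X_j,mL)$ grow only like $m^{\dim X-1}$.
\item In the nef case, the descent via the $\mu$-quotient is unnecessary, and it is a hidden gluing statement. Once $F=0$, $|L|$ is base-point free, so $\phi$ is already a morphism on $X$.
\end{itemize}
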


First, we prove a proposition in a more general setting.

\begin{proposition}\label{prop: properties of Proj deminormal}
    Let $X$ be a demi-normal scheme, with a proper morphism $f\colon X\rightarrow T$ to a semi-normal scheme $T$, such that each irreducible component $X_i$ of $X$ dominates some irreducible component $T_i$ of $T$ (not necessarily distinct). Let $L$ be an effective Cartier divisor on $X$.
    Suppose that $R(X/T,L)$ is (finitely) generated by $f_*\cO_X(L)$ as an $\cO_T$-algebra, and that $L_i:=L|_{X_i}$ on $X_i$ is relatively big over $T_i$ for all $i$.
    Let $\phi\colon X\dashrightarrow\PP_T^N$ be the map given by $|L|_T$. Let $Y$ be the closure of the image and $|H|_T$ the hyperplane class on $Y$. Suppose that $\Ex(\phi)$ does not contain any component of the conductor $D_X$. Then:
    \begin{enumerate}
        \item $\phi$ is regular in codimension 1 and $\phi$ is birational
        \item $Y$ is demi-normal.
        \item $\phi_*^{-1}D_Y=D_X$ and $\phi_*D_X=D_Y$.
        \item $Y\setmin\phi(X\setmin\Bs|L|_T)$ has codimension $\geq2$.
        \item Every divisor in $\Bs|L|_T$ is contracted by $\phi$.
        \item $\phi_*|L|_T=|H|_T$.
        \item If $L$ is nef over $T$, then $|L|_T$ is base-point free over $T$.
    \end{enumerate}
\end{proposition}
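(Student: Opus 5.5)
We will work locally over $T$: all statements are local on $T$, and since $R(X/T,L)$ is generated in degree one, we may assume $T$ affine. Then $|L|_T$ is spanned by finitely many sections $s_0,\dots,s_N$ of $f_*\cO_X(L)$, and $\phi$ is the associated rational map to $\PP^N_T$. On $X\setminus\Bs|L|_T$ the map $\phi$ is a morphism with $\phi^*H=L$ there, and $Y$ is the closure of its image.

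For (1), we first show birationality. Since $R(X/T,L)$ is generated in degree one, the homogeneous coordinate ring of $Y$ surjects onto every graded piece of $R(X/T,L)$, so $Y\simeq\Proj R(X/T,L)$. Bigness of each $L_i$ over $T_i$ then gives that $\phi|_{X_i}$ is birational onto its image. Distinct components $X_i$ map to distinct components of $Y$: sections of $mL$ separate the components of $X$ generically, because $f_*\cO_X(mL)$ contains sections vanishing identically on all but one component, using the $S_2$ property and bigness on that component. Regularity in codimension 1 follows because $\Ex(\phi)$ contains no component of $D_X$ by hypothesis, while at a codimension-1 point of $X\setminus D_X$ the scheme is regular and a rational map to a proper scheme is defined there.

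For (3) and (2), since $\phi$ is birational and defined at the generic points of $D_X$ and does not contract them, we get $\phi_*D_X\subset Y$ of codimension one. The goal is to show that $Y$ is nodal along these images and normal in codimension one elsewhere. The codimension-one points of $Y$ not lying in $\phi(D_X)$ come either from divisors of $X\setminus D_X$ not contracted by $\phi$, or from points in $Y\setminus\phi(X\setminus\Bs|L|_T)$; (4) shows there are no points of the second type. To get $S_2$, we compare $\cO_Y$ with $\phi_*\cO_X$. The argument is that $R(X/T,L)$ is $S_2$-like: each $H^0(X,mL)$ is determined in codimension 2 because $X$ is $S_2$ and $L$ is Cartier. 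Hence $\Proj$ of it satisfies $S_2$. Concretely, the cone $\Spec R$ is $S_2$, because sections extend over codimension-2 subsets of the cone of $X$, and $Y=\Proj R$ inherits this property. I expect this step, together with identifying nodes along $\phi(D_X)$, to be the main obstacle. For the nodes, I would use that $\phi$ is a local isomorphism at the generic points of $D_X$, which holds once we know $\phi$ is not in $\Ex$ there and $Y$ is $S_2$ with normalization equal to that of $X$ in codimension one. That same local isomorphism gives $\phi_*^{-1}D_Y=D_X$.

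For (4), (5), and (6), suppose $Y\setminus\phi(X\setminus\Bs)$ contained a divisor $B$. Then $\phi^{-1}$ is not a morphism near the generic point of $B$, and sections of $H$ vanishing on $B$ would pull back to sections of $L$ with vanishing imposed only along exceptional loci. Generation in degree one and the identification $H^0(Y,mH)=H^0(X,mL)$ give a contradiction with the normality of $Y$ in codimension one off $D_Y$, which is established in (2). For (5), a divisor $F$ in $\Bs|L|_T$ cannot map birationally onto a divisor of $Y$. If it did, $\phi$ would be a local isomorphism at the generic point of $F$ and $\phi^*H$ would not contain $F$ there. Then (6) follows: writing $|L|_T=\phi^*|H|_T+F_{\mathrm{fix}}$ with $F_{\mathrm{fix}}$ contracted, pushing forward gives $\phi_*|L|_T=|H|_T$.

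For (7), if $L$ is nef over $T$, we resolve $\phi$ by $p\colon W\to X$ and $q\colon W\to Y$. We then have $p^*L=q^*H+F$ with $F\geq0$ and the support of $F$ equal to $p^{-1}\Bs$. Since $F$ is $q$-exceptional by (5), and $-F$ is $q$-nef (because $p^*L$ is nef and $q^*H$ is $q$-trivial), the negativity lemma, applied componentwise on the normalization, gives $F\leq0$. Hence $F=0$ and $\Bs|L|_T=\emptyset$.
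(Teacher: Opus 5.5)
\textbf{There is a genuine gap, in item (2).} You claim that $\Spec R$, and hence $Y=\Proj R$, is $S_2$ because each $\H^0(X,mL)$ is determined in codimension $2$ on the $S_2$ scheme $X$. This argument never uses the hypothesis on $D_X$, and it is false without it. In Example~\ref{exm: Proj RX not deminormal}, $X$ is two copies of $S$ glued along $E$, an snc surface and so $S_2$. A Veronese of $R_X$ is generated in degree one by the Cartier divisor $\ell K_X$. Yet $\Proj R_X$ is two copies of the log canonical model of $(S,E)$ glued at the image point of $E$, which is not $S_2$. The argument breaks because a section on the complement of a codimension-$2$ set $Z\subset Y$ pulls back only to $X\setminus\phi^{-1}(Z)$, and $\phi^{-1}(Z)$ may contain divisors: the $\phi$-exceptional ones, such as the contracted conductor $E$ in that example.

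The same problem affects the rest of your argument.
\begin{itemize}
\item Your treatment of the nodes is circular: it presupposes $S_2$ and a comparison of normalizations you have not proved.
\item Knowing that $\phi$ is an isomorphism onto its image near the generic points of $D_X$ does not stop other divisors of $X$ from mapping onto $\phi(D_X)$.
\item Items (2) and (4) are each used to prove the other.
\item Your argument for (5) reaches no contradiction: $\phi^*H$ not containing $F$ is perfectly consistent with $L=\phi^*H+F$.
\end{itemize}

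\textbf{The missing ingredient is connectedness of the fibres.} First replace $X$ by a demi-normal partial resolution of the graph of $\phi$, as the paper does. Then $\phi$ is a morphism with $|L|_T=\phi^*|H|_T+F$, $F$ is Cartier, and $\H^0(Y,mH)=\H^0(X,mL)$.
\begin{itemize}
\item Suppose $X\to Y'\xrightarrow{\psi}Y$ with $\psi$ finite birational. The inclusions $\H^0(Y,mH)\subseteq\H^0(Y',m\psi^*H)\subseteq\H^0(X,mL)$ are then equalities, and $\psi^*H$ is ample, so $\psi$ is an isomorphism. Hence $\phi_*\cO_X=\cO_Y$.
\item It follows that every codimension-one point of $Y$ has a unique preimage, of codimension one, at which $\phi$ is a local isomorphism. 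So $Y$ is nodal in codimension one, and (3) follows.
\item Since no conductor component is contracted, Lemma~\ref{lem: universal property deminormal modification} factors $\phi$ through the demi-normal modification $\widetilde{Y}\to Y$. By the first point this map is an isomorphism, so $Y$ is $S_2$. This is where the conductor hypothesis genuinely enters.
\end{itemize}

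\textbf{Items (4)--(5).} The paper uses the argument of \cite[Proposition 1.16]{Kollar-singularities}, and the local isomorphism above is what makes it work. One way to run it: suppose a component of $F$ is not contracted. Then $\phi_*\cO_X(F)\supsetneq\cO_Y$. By Serre vanishing, for $k\gg0$ there is a section $s$ of $\phi^*(kH)+F$ not divisible by the equation $\sigma_F$ of $F$. The section $s\,\sigma_F^{k-1}$ of $kL$ then does not lie in $\sigma_F^k\cdot\phi^*\H^0(Y,kH)$, contradicting generation in degree one.

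\textbf{Item (7).} Your sign is reversed. It is $F=p^*L-q^*H$ that is $q$-nef, not $-F$. The negativity lemma applied with $q_*F=0$ then gives $-F\geq 0$, and hence $F=0$.
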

\begin{proof}
    To simplify the notation, we prove the statement in the case where $T$ is a point. The argument in the general case is analogous, and can be reduced to the case where $T$ is affine.
    
    By assumption, $\phi$ is a morphism around the generic points of $D_X$, and can be extended to a morphism at the other codimension 1 points as they are regular. Secondly, note that for $m\gg0$ the sections of $H^0(X,mL)$ separate the irreducible components of $X$, and grow as $c'm^{\dim X}$ for some constant $c'>0$. This follows immediately from the fact that, for every irreducible component $X_i$ of $X$, the dimension of the target of the restriction homomorphism
    \[
    \begin{tikzcd}
        \H^0(X_i,mL_i)\arrow[r] & \oplus_{j\not=i}\H^0(X_i\cap X_j,mL|_{X_i\cap X_j})
    \end{tikzcd}
    \]
    grows at most as $m^{\dim X-1}$. The birationality of $\phi|_{X_i}$ follows from~\cite[Lemma 3.23]{Kollar_Mori_1998}.

    Now, let $X\xleftarrow{p}W''\xrightarrow{q}Y$ be the closure of the graph of $\phi$, with the two projections. Since $\phi$ is defined on an open $U$ whose complement has codimension at least 2, $p\colon W''\rightarrow X$ is an isomorphism over $U$ and $p^{-1}(U)\subset W''$ is at most nodal in codimension 1. However, it is possible for $W''$ to have singular point of codimension 1 that get contracted to a subscheme of $X\setminus U$. To fix this, we first take a semi-normalization of $W''$ and then blowup the singular codimension 1 points outside $p^{-1}(U)$. The resulting scheme $W'$ is now nodal in codimension 1, thus we can take its demi-normal modification $W\rightarrow W'$ (Definition~\ref{def: deminormal modification}). We still use $p$ and $q$ to denote the maps from $W$ to $X$ and $Y$, respectively. Then, $p\colon W\rightarrow X$ is a birational morphism between demi-normal varieties, induces an isomorphism $p^{-1}(U)\rightarrow U$, and all singular points of codimension 1 in $X$ (resp. $W$) are contained in $U$ (resp. $p^{-1}(U)$). It follows that $p_*\cO_W\simeq\cO_X$. Therefore, if we set $M:=p^*L$, the pair $(W,M)$ satisfies the same assumptions as in the statement, and $\Bs|M|=p^{-1}\Bs|L|$. It is then enough to prove the statement assuming that $\phi$ is a surjective proper morphism, and
    \begin{equation}\label{eq: relation linear systems}
        |L|=\phi^*|H|+F
    \end{equation}
    where $F$ is a Cartier divisor equal to the base locus of $|L|$. In particular, $\H^0(Y,mH)\simeq\H^0(X,mL)$ for all $m\geq0$.

    Now, let $X\xrightarrow{\phi'}Y'\xrightarrow{\psi}Y$ be any factorization of $\phi$ with $\phi'$ a birational morphism and $\psi$ finite. Then, $m\psi^*H=\psi^*(mH)$ is very ample for $m\gg0$, and
    \[
        \H^0(Y,mH)\subseteq\H^0(Y',m\psi^*H)\subseteq\H^0(X,mL),
    \]
    as the morphisms are dominant and satisfy an equality between linear systems as in~\eqref{eq: relation linear systems}. Since the composite of inclusions is an equality, we then get $Y'\simeq Y$. It follows that any such factorization is the trivial one, in particular $\phi_*\cO_X=\cO_Y$ and the fibers are connected.
    
    We are ready to show that $Y$ is demi-normal. Let $\eta\in Y$ be a codimension 1 point. By surjectivity, birationality, and connectedness of the fibers, there is a unique point $\xi\in X$ mapping to $\eta$, it has codimension 1 and same residue field. In particular, $\phi$ is an isomorphism around $\eta$, which then has the same local behavior as $\xi$. This implies that $Y$ is nodal in codimension 1. Since $\phi$ does not contract any component of the conductor, by taking the demi-normal modification $\widetilde{Y}\rightarrow Y$, we obtain a factorization $X\rightarrow\widetilde{Y}\rightarrow Y$ of $\phi$ in birational morphisms, with the second map being finite (Lemma~\ref{lem: universal property deminormal modification}). By what we have proved above, it follows that $\widetilde{Y}=Y$ hence $Y$ is demi-normal.
    
    Note that we have also proved that all Weil divisors $B$ in $X$ are either contracted or mapped to a Weil divisor in $Y$, and that all irreducible components of $D_X$ are either contracted or mapped to an irreducible component of $D_Y$.
    Since the components of the conductor are not contracted by assumption, we get that $\phi_*^{-1}D_Y=D_X$ and $\phi_*D_X=D_Y$.
    
    The same argument as in~\cite[Proposition 1.16]{Kollar-singularities} shows that if a Weil divisor $B$ is not contracted then $B\not\subset F=\Bs|L|$. Since no component of $D_X$ is contained in the base locus, it follows that $\phi(\Bs|L|)$ has codimension at least 2 in $Y$, and $\phi_*|L|=|H|$.

    The fact that $|L|$ is base-point free when $L$ is nef follows from the next lemma.
\end{proof}

\begin{lemma}
    Let $\phi\colon X\rightarrow Y$ be a proper birational morphism between demi-normal schemes, and $F$ an effective $\QQ$-Cartier $\QQ$-divisor that is $\phi$-nef and contracted by $\phi$. Then, $F=0$.
\end{lemma}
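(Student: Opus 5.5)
This is the negativity lemma, adapted to demi-normal schemes. The plan is to reduce to the normal case and then to surfaces, where the negative definiteness of the intersection matrix of contracted curves gives $F=0$.

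\emph{Reduction to the normal case.} Let $\pi_X\colon\Xbar\to X$ and $\pi_Y\colon\Ybar\to Y$ be the normalizations. Since $\phi$ is birational and $\Xbar$ is normal, $\phi\circ\pi_X$ factors through $\Ybar$, giving a proper birational morphism $\phibar\colon\Xbar\to\Ybar$ on each component. Set $\bar F:=\pi_X^*F$. It is effective, since $F$ is $\QQ$-Cartier and $\pi_X$ is finite and birational onto each component. It is $\phibar$-nef, because a curve contracted by $\phibar$ maps to a curve contracted by $\phi$ and intersection numbers are preserved by the finite map. It is $\phibar$-exceptional, because $\pi_Y$ is finite, so a component of $\bar F$ has image of codimension $\ge2$ in $\Ybar$ exactly when its image in $X$ is contracted by $\phi$. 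If $\bar F=0$ then $F=0$, as $\pi_X$ is surjective and finite. So we may assume $X$ and $Y$ are normal, and moreover irreducible, working component by component.

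\emph{Normal case.} This is the classical negativity lemma \cite[Lemma 3.39]{Kollar_Mori_1998} applied with $B=-F$. Then $-B$ is $\phi$-nef, and $\phi_*B=0$ because $F$ is exceptional, so $-B=F$ is effective only if $F\le 0$. Combined with $F\ge 0$ this gives $F=0$. To make the argument self-contained, assume some component $E$ of $F$ exists. Cut by general hyperplane sections of $Y$ through a general point of $\phi(E)$, and by general hyperplanes on $X$, to reduce to a proper birational morphism of normal surfaces over a germ. There $F$ becomes a nonzero effective divisor supported on contracted curves. Its self-intersection is negative by Hodge index/Grauert, contradicting $F\cdot F\ge 0$, which follows from $F$ being nef on each contracted curve.

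\emph{Main obstacle.} The hard step is the reduction, specifically checking that exceptionality and nefness transfer to the normalization. This is straightforward because the normalizations are finite and the pulled-back divisor is $\QQ$-Cartier. The rest is citing \cite[Lemma 3.39]{Kollar_Mori_1998}. If $Y$ is not separated or essentially-of-finite-type issues arise, we would localize at the generic point of $\phi(E)$.
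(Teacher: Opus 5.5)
Your proposal is correct and follows the paper's proof. You pass to the induced morphism $\phibar\colon\Xbar\to\Ybar$ of normalizations, check that $\pi_X^*F$ is still effective, $\phibar$-nef and $\phibar$-exceptional, and conclude by the negativity lemma \cite[Lemma 3.39]{Kollar_Mori_1998}. Your extra verifications and the surface sketch of the normal case only spell out what the paper leaves implicit. One small imprecision: the projection formula multiplies $F\cdot\pi_X(C)$ by $\deg(C\to\pi_X(C))$ rather than preserving it, which is harmless for the sign.
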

\begin{proof}
    The result is standard when $X$ and $Y$ are normal, and it follows from~\cite[Lemma 3.39]{Kollar_Mori_1998}. Let $\phibar\colon \Xbar\rightarrow\Ybar$ be the induced morphism between the normalizations, and note that $\pi_X^*F$ is again effective, $\phibar$-nef, and contracted by $\phibar$.
    Therefore, $\pi_X^*F=0$, thus $F=0$.
\end{proof}

\begin{proof}[Proof of Theorem~\ref{thm: projRX is stable model}]
    As for Proposition~\ref{prop: properties of Proj deminormal}, we provide a proof in the case where $T$ is a point. The general case is analogous.
    
    Since $R_X$ is finitely generated, there is an integer $\ell>0$ such that $\ell(K_X+\Delta_X)$ is Cartier and the $\ell$-th Veronese subalgebra of $R_X$ is generated by $\H^0(X,\ell(K_X+\Delta_X))$. Then, $L:=\ell(K_X+\Delta)$ satisfies the assumptions of Proposition~\ref{prop: properties of Proj deminormal}.
    Therefore, there is a natural birational map $\phi\colon X\dashrightarrow Y$ regular in codimension 1 and with demi-normal target, together with a very ample Cartier divisor $H$ on $Y$ satisfying $H\sim\phi_*L$, and $\H^0(X,mL)\simeq\H^0(Y,mH)$ via $\phi$, for all $m\geq0$. Moreover, $\Delta_Y:=\phi_*\Delta_X$ is a Weil divisor since $\phi^{-1}_*D_Y=D_X$.
    Then,
    \[
        \ell(K_Y+\Delta_Y)\sim\phi_*\ell(K_X+\Delta_X)=\phi_*L\sim H,
    \]
    hence $K_Y+\Delta_Y$ is $\QQ$-Cartier and ample. Since $\phi$ does not contract the conductor by assumption, by Lemma~\ref{lem: redundant def stable model} we are only left with showing that $\phibar$ is a log canonical model.

    For this, we can reduce to the case where $\phi$ is a morphism, by taking the demi-normal modification of a partial resolution of the closure of the graph of $\phi$ as done in the proof of Proposition~\ref{prop: properties of Proj deminormal}. Recall that we have
    \[
        \phi^*(\ell(K_Y+\Delta_Y))+F=\ell(K_X+\Delta_X)
    \]
    with $F$ effective and $\phi$-exceptional. Pulling back to the normalization, we get
    \[
        \phibar^*(\ell(K_{\Ybar}+D_{\Ybar}+\Delta_{\Ybar}))+\pi_X^*F=\ell(K_{\Xbar}+D_{\Xbar}+\Delta_{\Xbar}).
    \]
    Then, $\H^0(\Ybar,m(K_{\Ybar}+D_{\Ybar}+\Delta_{\Ybar}))=\H^0(\Xbar,m(K_{\Xbar}+D_{\Xbar}+\Delta_{\Xbar}))$ for all $m\geq0$ sufficiently divisible. It follows that $R_{\Xbar}=R(\Xbar,K_{\Xbar}+D_{\Xbar}+\Delta_{\Xbar})$ is finitely generated and $\Proj R_{\Xbar}=\Ybar$ is indeed the log canonical model. Moreover, by Proposition~\ref{prop: properties of Proj deminormal} we know that $\phibar_*D_{\Xbar}=D_{\Ybar}$, while $\phi_*\Delta_{\Xbar}=\Delta_{\Ybar}$ holds by definition of $\Delta_{Y}$.
\end{proof}

\begin{example}\label{exm: Proj RX not deminormal}
    We provide an example showing that $\Proj R_X$ can fail to be demi-normal if the exceptional locus of the natural map $X\dashrightarrow \Proj R_X$ contains a component of $D_X$.

    Let $S$ be a minimal smooth projective surface containing a smooth elliptic curve $E$. Let $T$ be the log canonical model of $(S,E)$, which is obtained by contracting $E$ to a simple elliptic singularity $p\in T$. Let $X$ be the union of two copies of $S$ glued along $E$, and let $R_X:=R(X,K_X)$. Note that the restriction map
    \[
    \begin{tikzcd}
        \res\colon R_S:=R(S,K_S+E)\arrow[r] & R(E,K_E)=:R_E\simeq\CC[t]
    \end{tikzcd}
    \]
    is surjective by~\cite[Proposition 4.45]{Kollar_Mori_1998} or Kawamata-Viehweg vanishing. Moreover, for every $m\geq0$ there exists an exact sequence
    \[
    \begin{tikzcd}[column sep=small]
        0\arrow[r] & \H^0(X,mK_X)\arrow[r] & \H^0(S,m(K_S+E))\oplus \H^0(S,m(K_S+E))\arrow[r,"\phi"] & \H^0(E,mK_E)\arrow[r] & 0
    \end{tikzcd}
    \]
    where $\phi(s_1,s_2)=\res(s_1)-(-1)^m\res(s_2)$. In particular, $R_X\simeq R_S\times_{\CC[t]}R_S$.
    From this and the surjectivity of $\res$, it follows that $R_X$ is finitely generated and $\Proj R_X$ is the union of two copies of $T$ glued along the elliptic singularity $p$, hence not demi-normal.
\end{example}

An important question is whether a good stable model of a semi-dlt pair induces another good stable model for the conductor.
This fails in general, but there are important situations where the answer is positive.

\begin{lemma}\label{lem: if birational conductor then stable model}
    Let $\phi\colon (X,\Delta_X)\rightarrow(Y,\Delta_Y)$ be a good stable model over $T$ of a semi-dlt pair $(X,\Delta_X)$. Suppose further that $\phi$ is a crepant birational morphism, that is, $K_X+\Delta_X=\phi^*(K_Y+\Delta_Y)$, and that $\Ex(\phi)$ does not contain any stratum of the conductor $D_X$. Let $\phi_D\colon D_{\Xbar}\rightarrow\widetilde{D}_{\Ybar}$ be the morphism induced by taking the Stein factorization of $D_{\Xbar}\rightarrow D_{\Ybar}$. Then, $\widetilde{D}_{\Ybar}$ is demi-normal, $\phi_{D_{\Xbar}}$ is a crepant good stable model over $T$, and $\Ex(\phi_D)$ does not contain any stratum of the conductor. Moreover, $\widetilde{D}_{\Ybar}\simeq\Proj R_{D_{\Xbar}}$.
\end{lemma}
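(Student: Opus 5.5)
We work on the normalizations and then descend to the conductors. Since $\phi$ is crepant, $\phibar\colon(\Xbar,D_{\Xbar}+\Delta_{\Xbar})\rightarrow(\Ybar,D_{\Ybar}+\Delta_{\Ybar})$ is a crepant birational morphism with
\[
    \phibar^*(K_{\Ybar}+D_{\Ybar}+\Delta_{\Ybar})=K_{\Xbar}+D_{\Xbar}+\Delta_{\Xbar}.
\]
By Definition~\ref{def: stable model slc pairs}\eqref{property: normalization lc model}, $K_{\Ybar}+D_{\Ybar}+\Delta_{\Ybar}$ is ample over $T$. Restricting to $D_{\Xbar}$ and using the compatibility of differents with $\QQ$-Cartier pullbacks, as in the proof of Proposition~\ref{prop: Y=proj RX good stable model}, gives
\[
    K_{D_{\Xbar}}+\Diff_{D_{\Xbar}}(\Delta_{\Xbar})=\phi_D^*\bigl(\text{pullback of }K_{\Ybar}+D_{\Ybar}+\Delta_{\Ybar}\text{ to }\widetilde{D}_{\Ybar}\bigr),
\]
where the pullback is along the finite part $\widetilde{D}_{\Ybar}\rightarrow D_{\Ybar}$ of the Stein factorization. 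A finite pullback of an ample divisor is ample, so $L:=K_{D_{\Xbar}}+\Diff_{D_{\Xbar}}(\Delta_{\Xbar})$ is the pullback along $\phi_D$ of a divisor $A$ that is ample over $T$. Since $\phi_D$ has connected fibres, $(\phi_D)_*\cO_{D_{\Xbar}}=\cO_{\widetilde{D}_{\Ybar}}$. The projection formula then gives $R(D_{\Xbar}/T,L)\simeq R(\widetilde{D}_{\Ybar}/T,A)$ on Veronese subalgebras, so this algebra is finitely generated and $\widetilde{D}_{\Ybar}\simeq\Proj R_{D_{\Xbar}}$. By Lemma~\ref{lem: semidlt adjunction}, $(D_{\Xbar},\Diff_{D_{\Xbar}}(\Delta_{\Xbar}))$ is semi-dlt.

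Next we show that $\phi_D$ does not contract strata. The strata of the conductor of $D_{\Xbar}$ are among the strata of $D_{\Xbar}$, namely intersections of components of $D_{\Xbar}$ and of $\lfloor\Delta_{\Xbar}\rfloor$. Their images in $X$ are strata of $D_X$ or lc centres lying on $D_X$. The hypothesis that $\Ex(\phi)$ contains no stratum of $D_X$ should therefore give, after passing to normalizations via the finite maps $\pi_X,\pi_Y$, that $\phibar$ is an isomorphism at the generic point of every stratum of $D_{\Xbar}$. In particular $\phi_D$ is birational on each component of $D_{\Xbar}$ and on each component of its conductor. This is the step I expect to be the main obstacle. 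One must match strata of $D_{\Xbar}$, including self-intersection strata of a single component of $D_{\Xbar}$, with strata of $D_X$. The plan is to use that strata of the dlt pair $(\Xbar,D_{\Xbar}+\lfloor\Delta_{\Xbar}\rfloor)$ are exactly its lc centres, that $\pi_X$ maps them onto lc centres of $(X,\Delta_X)$ contained in $D_X$, and to interpret ``stratum of $D_X$'' in the hypothesis so that it covers these.

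Granting this, $\phi_D$ is a birational morphism from the demi-normal scheme $D_{\Xbar}$ that does not contract components of its conductor. Example~\ref{exm: deminormal modifications}\eqref{exm: deminormal stein factorization} then applies: Stein factorization of a morphism from a demi-normal scheme that is an isomorphism at the codimension-one points of the target yields a demi-normal target. So $\widetilde{D}_{\Ybar}$ is demi-normal. To know the target is nodal in codimension one, use that $\phi_D$ is an isomorphism at the generic points of the conductor strata and that codimension-one points of $\widetilde{D}_{\Ybar}$ not in that image come from regular points.

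Finally, set $\Delta_{\widetilde{D}_{\Ybar}}:=(\phi_D)_*\Diff_{D_{\Xbar}}(\Delta_{\Xbar})$. Then $K+\Delta_{\widetilde{D}_{\Ybar}}=A$ is $\QQ$-Cartier and ample over $T$, and $\phi_D$ is crepant, hence the discrepancy inequality~\eqref{eq: inequality discrepancies} holds with equality. Properties~\eqref{property: Y slc}, \eqref{property: ample}, \eqref{property: birational contraction}, \eqref{property: pushforward} and \eqref{property: birational conductor} hold by the above: slc-ness follows from crepancy and the semi-dlt source, and birationality on the conductor follows from the stratum statement. Lemma~\ref{lem: redundant def stable model}(d) then concludes that $\phi_D$ is a crepant good stable model. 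Alternatively, Theorem~\ref{thm: projRX is stable model} applied to $\Proj R_{D_{\Xbar}}$ gives the same conclusion.
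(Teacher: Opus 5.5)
Your proof is correct in outline and is essentially the paper's. The paper obtains the statement as the case $S_X=D_{\Xbar}$ of Lemma~\ref{lem: birational coeff 1 boundary then stable model}. Its proof runs through the same steps you use: semi-dlt adjunction, Stein factorization together with Example~\ref{exm: deminormal modifications}, restriction of the crepant identity, and ampleness via the finite map $\widetilde{D}_{\Ybar}\to\Ybar$. The one genuine difference is at the end. You identify $\widetilde{D}_{\Ybar}\simeq\Proj R_{D_{\Xbar}}$ directly from $L=\phi_D^*A$, $(\phi_D)_*\cO_{D_{\Xbar}}=\cO_{\widetilde{D}_{\Ybar}}$ and the projection formula. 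This is more elementary than the paper's appeal to Proposition~\ref{prop: Y=proj RX good stable model}, and it does not require the good-stable-model structure to be established first.

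The step you flag is not a real obstacle, but the plan you sketch for it should be changed. You only need to control the strata of the conductor of $D_{\Xbar}$. Since the components $D_i$ of $D_{\Xbar}$ are normal (the normalization pair is dlt), that conductor is $\bigcup_{i\neq j}D_i\cap D_j$. So there are no self-intersection strata, and intersections with $\lfloor\Delta_{\Xbar}\rfloor$ only produce coefficient-one components of $\Diff_{D_{\Xbar}}(\Delta_{\Xbar})$, not conductor. Each component of $D_{i_1}\cap\dots\cap D_{i_k}$ maps finitely onto a stratum of $D_X$. Moreover $\Ex(\phibar)\subseteq\pi_X^{-1}(\Ex(\phi))$, because normalization commutes with restriction to open subsets. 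Hence the hypothesis as stated makes $\phibar$ an isomorphism near the generic point of each such stratum, and then (by Zariski's main theorem) so are $D_{\Xbar}\to D_{\Ybar}$ and $\phi_D$.

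Do not stretch the hypothesis to cover lc centres such as components of $D_i\cap\lfloor\Delta_{\Xbar}\rfloor$. The lemma says nothing about them, and the stronger claim you wrote is false in general. For example, blow up $D_Y\cap B$ in a semi-SNC stable pair $(Y,B)$ with $B$ reduced and $D_Y$ smooth. This gives a crepant good stable model $X\to Y$ satisfying the hypothesis, which contracts the exceptional divisor $E$; so $\phibar$ is not an isomorphism at the lc centre $D_{\Xbar}\cap E$. The failure is harmless, though, since $\phi_D$ is still an isomorphism there.
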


This is a special case of the following more general lemma.

\begin{lemma}\label{lem: birational coeff 1 boundary then stable model}
    Let $\phi\colon (X,\Delta_X)\rightarrow(Y,\Delta_Y)$ be a log canonical model (so $X$ is normal) over a scheme $T$ that is also a crepant birational morphism, meaning $K_X+\Delta_X=\phi^*(K_Y+\Delta_Y)$, and assume $(X,\Delta_X)$ is a dlt pair. Let $S_X\leq\lfloor\Delta_X\rfloor$ be a reduced subdivisor of the coefficient 1 part of the boundary, such that $\phi|_{S_X}\colon S_X\rightarrow Y$ is birational onto its image $S_Y\leq\lfloor\Delta_Y\rfloor$ and does not contract any stratum of $S_X$. Let $\phi_1\colon S_X\rightarrow\widetilde{S}_{Y}$ be the morphism induced by taking the Stein factorization of $S_X\rightarrow S_Y$. Then, $(S_X,\Diff_{S_X}(\Delta_X-S_X))$ is semi-dlt, $\widetilde{S}_{Y}$ is demi-normal, $\phi_1$ is a crepant good stable model over $T$, and $\Ex(\phi_1)$ does not contain any stratum of $S_X$. Moreover, $\widetilde{S}_{Y}\simeq\Proj R_{S_X}$.
\end{lemma}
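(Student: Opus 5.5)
The plan is to deduce everything on $S_X$ from Kawamata--Viehweg type surjectivity plus the characterization of good stable models as Proj (Theorem~\ref{thm: projRX is stable model}). We proceed in the following order.

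\textbf{Step 1: the pair on $S_X$.} By Lemma~\ref{lem: semidlt adjunction}, $S_X$ is demi-normal and $(S_X,\Diff_{S_X}(B))$ is semi-dlt, where $B=\Delta_X-S_X$.

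\textbf{Step 2: crepancy on $S_X$.} Restricting $K_X+\Delta_X=\phi^*(K_Y+\Delta_Y)$ to $S_X$ gives
\[
K_{S_X}+\Diff_{S_X}(B)=\phi|_{S_X}^*\bigl((K_Y+\Delta_Y)|_{S_Y}\bigr).
\]
Hence $K_{S_X}+\Diff_{S_X}(B)$ is the pullback of a divisor that is ample over $T$. It is therefore semi-ample and big over $T$ on each component, since $\phi|_{S_X}$ is birational onto its image. Semi-ampleness gives finite generation of $R_{S_X}$.

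\textbf{Step 3: identify $\Proj R_{S_X}$ with $\widetilde S_Y$.} Let $A=(K_Y+\Delta_Y)|_{S_Y}$. Then $R_{S_X}$ is, in sufficiently divisible degrees, $\bigoplus_m (g_1)_*(\phi_1)_*\cO_{S_X}(m\phi_1^*g^*A)$, where $g\colon\widetilde S_Y\to S_Y$ is the finite part of the Stein factorization and $g_1$ is $\widetilde S_Y\to T$. Since $(\phi_1)_*\cO_{S_X}=\cO_{\widetilde S_Y}$ and $g^*A$ is ample over $T$, we get $\Proj R_{S_X}\simeq\widetilde S_Y$, and $\phi_1$ is the induced morphism.

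\textbf{Step 4: $\widetilde S_Y$ is demi-normal.} $\phi_1$ is a birational morphism with connected fibers that does not contract any stratum of $S_X$, so in particular no component of its conductor. Note that the double locus of $S_X$ is a union of strata $S_i\cap S_j$. By the argument of Proposition~\ref{prop: properties of Proj deminormal}, or directly by Example~\ref{exm: deminormal modifications}\eqref{exm: deminormal stein factorization}, $\widetilde S_Y$ is nodal in codimension 1 and coincides with its demi-normal modification, hence is demi-normal. Here the hypotheses of Example~\ref{exm: deminormal modifications}\eqref{exm: deminormal stein factorization} must be checked: $\phi_1$ has connected fibres, and $\widetilde S_Y$ is nodal in codimension 1. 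For the latter, a codimension 1 point of $\widetilde S_Y$ has a unique codimension 1 preimage in $S_X$ near which $\phi_1$ is an isomorphism.

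\textbf{Step 5: good stable model.} Apply Theorem~\ref{thm: projRX is stable model} to $S_X$ over $T$: $R_{S_X}$ is finitely generated, $\phi_1$ is a morphism (regular in codimension 1), and $\phi_1$ contracts no component of the conductor. So $\phi_1$ is the unique good stable model, and crepancy is inherited from Step 2. The condition on $\Ex(\phi_1)$ is immediate: strata of the conductor of $S_X$ are strata of $S_X$ (intersections of components of $\lfloor\Delta_X\rfloor$), and $\phi$ contracts none of them.

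\textbf{Main obstacle.} The delicate point is Step 4: ensuring that the Stein factorization really is demi-normal, i.e.\ $S_2$, rather than merely nodal in codimension 1. Example~\ref{exm: conductor is not always S2} warns that $S_Y$ itself need not be $S_2$. The plan is to use that $(\phi_1)_*\cO_{S_X}=\cO_{\widetilde S_Y}$ together with the $S_2$ property of $S_X$ (semi-dlt). Sections extend over codimension 2 subsets of $\widetilde S_Y$, because their preimages have codimension $\geq2$ away from exceptional loci. Exceptional divisors map to codimension $\geq2$ subsets, so it suffices to show that functions on $S_X$ minus exceptional divisors extend across them. This is exactly the universal property in Lemma~\ref{lem: universal property deminormal modification}: the map factors through the demi-normal modification, and finiteness together with connected fibres forces equality.
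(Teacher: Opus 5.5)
Your proof is correct, but it runs the paper's argument in the reverse order.

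\textbf{The paper's route.} It first gets demi-normality of $\widetilde S_Y$ from Example~\ref{exm: deminormal modifications}\eqref{exm: deminormal stein factorization} applied to $S_X\to S_Y$. It restricts the crepant identity to get crepancy of $\phi_1$. It deduces that the target is slc, and stable because it is finite over $Y$. It then checks the definition of a good stable model by hand. Only at the end does it get $\widetilde S_Y\simeq\Proj R_{S_X}$, from the uniqueness statement, Proposition~\ref{prop: Y=proj RX good stable model}.

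\textbf{Your route.} You compute $\Proj R_{S_X}$ directly, using semi-ampleness and the projection formula with $(\phi_1)_*\cO_{S_X}=\cO_{\widetilde S_Y}$ and $g^*A$ ample over $T$. You then get the good-stable-model property from the existence direction, Theorem~\ref{thm: projRX is stable model}. That theorem handles both slc-ness of the target and the lc-model property of the normalization for you.

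\textbf{Step~4 is more robust than the paper's.} You apply the Example to $\phi_1$ itself, after showing that $\phi_1$ is an isomorphism near every codimension~1 point of $\widetilde S_Y$. So you need no information about $S_Y$. The paper's application to $S_X\to S_Y$ tacitly needs connected fibres over a big open subset of $S_Y$, and this can fail. For instance, if $\phi$ blows up the codimension~2 intersection of two components of $\lfloor\Delta_Y\rfloor$ (with the exceptional divisor in $\Delta_X$ with coefficient $1$), their strict transforms become disjoint. Their images still meet along a divisor of $S_Y$, so $S_X\to S_Y$ has disconnected fibres there. The conclusion of the lemma still holds in that case, but only your argument covers it as written.

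\textbf{A hypothesis you skipped.} Theorem~\ref{thm: projRX is stable model} is stated over a semi-normal base, for sources whose irreducible components each dominate a component of $T$. Here $T$ is an arbitrary scheme, and components of $S_X$ may be vertical over $T$. Those hypotheses mainly serve to produce birationality of the map to $\Proj$, which you already have, so the gap is harmless. Still, you should either say this explicitly or, more simply, verify the lc-model condition on normalizations directly: crepancy of $\phi_1$ gives equality of discrepancies, and pullback of the $T$-ample class $g^*A$ to the normalization stays ample. That direct check is essentially what the paper does.
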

\begin{proof}
    By the standard (semi-)dlt adjunction in Lemma~\ref{lem: semidlt adjunction} and~\cite[Corollary 4.5]{fujino2016vanishing}, the restriction to the subdivisor $S_X$ yields a well-defined semi-dlt pair $(S_X, \Diff_{S_X}(\Delta_X-S_X))$. In particular, $S_X$ is a demi-normal scheme.
    
    Taking the Stein factorization of the proper morphism $S_X\rightarrow S_Y$ yields a morphism $\phi_1\colon  S_X \rightarrow\widetilde{S}_Y$ with connected fibers. Then, $\widetilde{S}_Y$ is demi-normal because the domain $S_X$ is demi-normal, the map does not contract any component of the conductor of $S_X$, and $S_Y$ is nodal in codimension 1, see Example~\ref{exm: deminormal modifications}\eqref{exm: deminormal stein factorization}. Restricting the crepant identity $K_X+S_X+(\Delta_X-S_X)=\phi^*(K_Y+\Delta_Y)$ to $S_X$ via adjunction (or \cite[Proposition 4.6]{Kollar-singularities}) shows that $\phi_1$ is a crepant morphism of pairs.
    Since $\phi_1$ is a crepant morphism with connected fibers from a semi-dlt pair, the target pair $(\widetilde{S}_Y, \Delta_{\widetilde{S}_Y})$ is automatically slc. Furthermore, since $K_Y+\Delta_Y$ is ample by the definition of a log canonical model and $\widetilde{S}_Y$ is finite over $Y$, $(\widetilde{S}_Y,\Delta_{\widetilde{S}_Y})$ is a stable slc pair.
    Since $\phi_1$ is a crepant birational morphism from a semi-dlt pair to a target polarized by an ample log canonical class, it satisfies every condition of a good stable model over $T$. The isomorphism $\widetilde{S}_{Y}\simeq\Proj R_{S_X}$ follows from Proposition~\ref{prop: Y=proj RX good stable model}.
\end{proof}

\section{Proofs of the main results}\label{sec: proof main theorems}

In this section, we prove Theorems~\ref{thm: general version} and~\ref{thm: equivariant sklt}, starting from the latter.

\subsection{Proof of Theorem~\ref{thm: equivariant sklt}}\label{subsec: proof equivariant sklt}

We start by recalling the setting. Let $(X,\Delta_X)$ be a semi-klt pair, $T$ a semi-normal scheme, and $f\colon X\rightarrow T$ a surjective projective morphism. Let $T^0\subset T$ be an open dense subscheme, and set $X^0=T^0\times_{T}X$, $\Delta_{X^0}=\Delta_X|_{X^0}$. We may also assume that $X$ is not normal, hence $D_X\not=\emptyset$. We assume that $X^0$ intersects all log centers of $(X,\Delta_X)$ lying on $D_X$; in particular, $X^0$ intersects all strata of the conductor $D_X$ and all irreducible components of $X$.
Suppose that $(X^0,\Delta_{X^0})$ admits a stable model
\[
\begin{tikzcd}
    \phi^{0,c}\colon (X^0,\Delta_{X^0})\arrow[r,dashed] & (X^{0,c},\Delta_{X^{0,c}})
\end{tikzcd}
\]
whose exceptional locus does not contain any stratum of $D_{X^0}$, and that the normalization pair $(\Xbar,D_{\Xbar}+\Delta_{\Xbar})$ admits a log canonical model $\phibar^c\colon \Xbar\rightarrow(\Xbar^c,D_{\Xbar^c}+\Delta_{\Xbar^c})$. Let $G\subset\Bir(\Xbar/T,D_{\Xbar}+\Delta_{\Xbar})$ be a finite subgroup. Let us set $R_X:=R(X/T,K_X+\Delta_X)$, and similarly for $R_{\Xbar}$.
Our goal is then to show that the extension of $\cO_T$-algebras
\[
\begin{tikzcd}
    R_X\cap R_{\Xbar}^G\arrow[r,hookrightarrow] & R_{\Xbar}
\end{tikzcd}
\]
is finite, with the convention of Definition~\ref{def: invariants under Bir}. In particular, we will tacitly restrict to suitable $\ell$-Veronese subalgebras with $\ell$ even; see Remark~\ref{rmk: different invariants mu} for a discussion on the parity of $\ell$.

We prove this by induction on the dimension of $X$. The base case is for 0-dimensional pairs, thus trivially true.
For the inductive step, we need the following lemmas, that highlights the nice properties specific to semi-klt pairs.

\begin{lemma}\label{lem: conductor for sklt induction}
    With the above assumptions, there exists a dlt pair $(\Xbar^m,D_{\Xbar^m}+\Delta_{\Xbar^m})$
    fitting in a commutative diagram of rational maps over $T$
    \[
    \begin{tikzcd}
        D_{\Xbar}^n\arrow[d]\arrow[r,dashed,"\phibar_{D^n}^m"] & D_{\Xbar^m}^n\arrow[d]\arrow[r,"\phibar_{D^n}"] & D_{\Xbar^c}^n\arrow[d]\\
        D_{\Xbar}\arrow[r,dashed,"\phibar_D^m"]\arrow[d,hookrightarrow] & D_{\Xbar^m}\arrow[r,"\phibar_D"]\arrow[d,hookrightarrow] & D_{\Xbar^c}\arrow[d,hookrightarrow]\\
        \Xbar\arrow[d,"\pi_X"]\arrow[r,dashed,"\phibar^m"] & \Xbar^m\arrow[r,"\phibar"] & \Xbar^c\\
        X
    \end{tikzcd}
    \]
    where solid arrows are morphisms, such that the following hold.
    \begin{enumerate}[itemsep=1em]
        \item\label{point 1} $\phibar^m$ is the minimal model of $(\Xbar,D_{\Xbar}+\Delta_{\Xbar})$, and $\phibar^c=\phibar\circ\phibar^m$ the log canonical model, with $\phibar_*^m D_{\Xbar}=D_{\Xbar^m}$ and $\phibar_* D_{\Xbar^m}=D_{\Xbar^c}$.
        \item\label{point 2} $\phibar_D$, $\phibar_D^m$ and $\phibar_D^c:=\phibar_D\circ\phibar_D^m$ are birational, and their exceptional loci do not contain any stratum of their conductors. The same holds for their normalizations $\phibar_{D^n}$, $\phibar_{D^n}^m$ and $\phibar_{D^n}^c:=\phibar_{D^n}\circ\phibar_{D^n}^m$.
        \item\label{point 3} $\lfloor\Delta_{\Xbar^m}\rfloor=0$ and $\lfloor\Diff_{D_{\Xbar^m}}(\Delta_{\Xbar^m})\rfloor=0$.
        \item\label{point 4} $\phibar_D$ is crepant and a good stable model of the semi-klt pair $(D_{\Xbar^m},\Diff_{D_{\Xbar^m}}(\Delta_{\Xbar^m}))$. The same holds for $\phibar_{D^n}$. In particular, $D_{\Xbar^c}$ is demi-normal, and $(D_{\Xbar^c},\Diff_{D_{\Xbar^c}}(\Delta_{\Xbar^c}))$ is a stable slc pair.
        \item\label{point 5} Let $R_{D_{\Xbar}^n}:=R(D_{\Xbar}^n/T,K_{D_{\Xbar}^n}+\Diff_{D_{\Xbar}^n}(\Delta_{\Xbar}))$, and similarly for $R_{D_{\Xbar^m}^n}$ and $R_{D_{\Xbar^c}^n}$. Then,
        \[
            R_{D_{\Xbar}^n}\supset R_{D_{\Xbar^m}^n}\simeq R_{D_{\Xbar^c}^n}.
        \]
    \end{enumerate}
\end{lemma}
\begin{proof}
    Since $\Xbar$ admits a log canonical model, eventually after taking a small $\QQ$-factorial modification, the MMP with scaling of an ample divisor terminates with a $\QQ$-factorial good minimal model $\phibar^m\colon \Xbar\dashrightarrow\Xbar^m$, mapping to a log canonical model $\Xbar^c$ (see for instance~\cite{Fujino2011_MMPtermination} and~\cite[\S2]{Hacon_Xu_dslt}). This yields~\eqref{point 1} and the diagram.

    Since $X^0$ intersects all strata of the conductor of $X$, to prove~\eqref{point 2}, it is enough to show that $\Ex(\phibar_D^c)$ does not contain any stratum of the conductor when restricted to $T^0$. On the other hand, $\Xbar^{0,c}=\Xbar^c\times_{T}T^0$ is the normalization of $X^{0,c}$, thus the claim follows from the assumption that the exceptional locus of $\phi^{0,c}$ does not contain any stratum of the conductor $D_X\subset X$.

    The first equality of~\eqref{point 3} follows immediately from $\phibar_{D*}^m\Delta_{\Xbar}=\Delta_{\Xbar^m}$. The second equality is proved in Lemma~\ref{lem: semidlt adjunction}.

    To prove~\eqref{point 4}, it is enough to show that $\phibar_D$ is a good stable model, as $\phibar_D$ is crepant by either~\cite[Proposition 4.6]{Kollar-singularities} or Lemma~\ref{lem: birational coeff 1 boundary then stable model}. By Lemma~\ref{lem: if birational conductor then stable model}, $D_{\Xbar^m}$ has finitely generated log canonical algebra $R_{D_{\Xbar^m}}$, whose Proj is a stable model $Z$ that is finite over $D_{\Xbar^c}$. By Lemma~\ref{lem: surjectivity restriction minimal model} below, the restriction map $R_{\Xbar^m}^{(\ell)}\twoheadrightarrow R_{D_{\Xbar^m}}^{(\ell)}$ is surjective for $\ell$ sufficiently divisible. It follows that $\Proj R_{D_{\Xbar^m}}\rightarrow\Proj R_{\Xbar^m}=\Xbar^c$ is a closed immersion. In particular, $Z=\Proj R_{D_{\Xbar^m}}\rightarrow D_{\Xbar^c}$ is an isomorphism, being a map of reduced subschemes with the same underlying set.

    The isomorphism in~\eqref{point 5} follows from $\phibar_{D^n}$ being a log canonical model, while the inclusion on the left follows from Proposition~\ref{prop: Y=proj RX good stable model}.
\end{proof}

\begin{lemma}\label{lem: surjectivity restriction minimal model}
    Let $(Y,S+B)$ be a $\QQ$-factorial dlt pair with $\lfloor S+B\rfloor=S$. Let $f\colon Y\rightarrow T$ be a proper morphism to a scheme $T$ and suppose that $K_Y+S+B$ is $f$-nef and $f$-big. Then, for $\ell$ sufficiently divisible, $\ell(K_Y+S+B)$ is Cartier and the restriction map
    \[
    \begin{tikzcd}
        f_*\cO_Y(\ell(K_X+S+B))\arrow[r,twoheadrightarrow] & f_*\cO_{S}(\ell(K_S+\Diff_S(B)))
    \end{tikzcd}
    \]
    is surjective.
\end{lemma}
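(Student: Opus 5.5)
The plan is the standard Kawamata--Viehweg argument, written for the $\QQ$-factorial dlt setting. Choose $\ell$ even and sufficiently divisible so that $\ell(K_Y+S+B)$ is Cartier; this is possible because $Y$ is $\QQ$-factorial. Twisting the ideal sheaf sequence of $S$ by $\cO_Y(\ell(K_Y+S+B))$ gives
\[
0\rightarrow \cO_Y(\ell(K_Y+S+B)-S)\rightarrow \cO_Y(\ell(K_Y+S+B))\rightarrow \cO_S(\ell(K_Y+S+B)|_S)\rightarrow 0 .
\]
By dlt adjunction (Lemma~\ref{lem: semidlt adjunction}, together with~\cite[Corollary 4.5]{fujino2016vanishing} or~\cite[Proposition 4.6]{Kollar-singularities}), $S$ is demi-normal and $(K_Y+S+B)|_S\sim_\QQ K_S+\Diff_S(B)$. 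Because $\ell(K_Y+S+B)$ is Cartier, its restriction to $S$ is identified with $\cO_S(\ell(K_S+\Diff_S(B)))$, after enlarging $\ell$ if necessary so that the $\QQ$-linear equivalence becomes an honest linear equivalence. The goal is therefore to prove $R^1f_*\cO_Y(\ell(K_Y+S+B)-S)=0$.

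Next write
\[
\ell(K_Y+S+B)-S = K_Y + B + (\ell-1)(K_Y+S+B).
\]
The pair $(Y,B)$ is klt: it is dlt because $Y$ is $\QQ$-factorial and $B\le S+B$, and $\lfloor B\rfloor=0$, so klt follows from~\cite[Proposition 2.41]{Kollar_Mori_1998}. The divisor $(\ell-1)(K_Y+S+B)$ is $f$-nef and $f$-big by hypothesis. The relative Kawamata--Viehweg vanishing theorem for klt pairs~\cite[Theorem 2.70]{Kollar_Mori_1998} then gives $R^if_*\cO_Y(K_Y+B+(\ell-1)(K_Y+S+B))=0$ for $i>0$. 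The left-hand side is a Weil divisor whose difference from $K_Y+B$ is $\QQ$-Cartier, so the round-down convention for $\cO_Y(\cdot)$ is harmless here. Pushing forward the exact sequence then yields the surjection.

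The main obstacle is bookkeeping rather than conceptual. The Kawamata--Viehweg statement for non-integral boundaries is stated for $\QQ$-Cartier Weil divisors $D$ with $D-(K_Y+B)$ nef and big; one must check that $\ell(K_Y+S+B)-S$ is an integral Weil divisor, and it is, since $\ell(K_Y+S+B)$ is Cartier and $S$ is reduced. The base $T$ is only essentially of finite type, so relative vanishing should be applied locally over affine opens of $T$, where it holds in the quasi-projective setting. If one prefers to avoid $\QQ$-factoriality of $S$ in identifying the restriction, one can instead invoke~\cite[Proposition 4.45]{Kollar_Mori_1998} directly, which packages exactly this surjectivity for dlt pairs with nef and big log canonical divisor.
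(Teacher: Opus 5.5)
Your argument is the same as the paper's: you reduce surjectivity to $R^1f_*\cO_Y(\ell(K_Y+S+B)-S)=0$, rewrite that divisor as $K_Y+B+(\ell-1)(K_Y+S+B)$ with $(Y,B)$ klt, and apply relative Kawamata--Viehweg vanishing. The paper cites \cite[Theorem 10.37]{Kollar-singularities} after first reducing to a normal $T$, whereas you work locally on $T$ and also justify explicitly why $(Y,B)$ is klt. The one thing to drop is the closing alternative via \cite[Proposition 4.45]{Kollar_Mori_1998}: it sits in Koll\'ar--Mori's chapter on surface singularities, so it does not give this statement in general dimension, and your main argument does not need it.
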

\begin{proof}
    Let $L=K_Y+S+B$, which is $f$-nef and $f$-big by assumption. Let $\ell$ be sufficiently divisible, so that $\ell L$ is Cartier and $\ell B$ is a $\ZZ$-divisor. It is enough to show that $R^1f_*\cO_Y(\ell L-S)$ is trivial. Since the normalization of $T$ is affine over $T$, we can prove this under the further assumption of $T$ being normal. Then, this is a standard consequence of~\cite[Theorem 10.37]{Kollar-singularities}. Indeed, write
    \[
        \ell L-S=K_Y+(\ell-1)L+B.
    \]
    Since $(Y,B)$ is klt, we can apply Kawamata-Viehweg vanishing~\cite[Theorem 10.37]{Kollar-singularities}.
\end{proof}

\begin{remark}
    At first glance, the statement of Lemma~\ref{lem: conductor for sklt induction}\eqref{point 4} seems surprising in general, as we are not explicitly asking $\phi^{0,c}\colon X^0\dashrightarrow X^{0,c}$ to induce a bijection between the set of \emph{all} strata of the respective conductors, and a birational map between \emph{all} such strata. Indeed, suppose $X^0$ to be SNC and nodal, but $X^{0,c}$ has a non-empty SNC locus of higher codimension. For a local picture, $X^{0,c}$ could be the zero locus of $x_1x_2x_3\in\CC[x_1,x_2,x_3]$ in $\AA^3$, and $X^{0}$ its blowup in $0$. Then, the conductor in $\Xbar^0$ would be the normalization of the conductor in $\Xbar^{0,c}$; in particular, the induced map is not Stein. However, this cannot happen under our assumptions, and the birationality of the strata is hidden in the semi-klt condition. Indeed, the discrepancy at the generic point of any stratum of the conductor in $X^{0,c}$ is $-1$, thus contradicting the fact that $\lfloor\Delta_{X^0}\rfloor=0$.
\end{remark}

We are ready to prove the inductive step for Theorem~\ref{thm: equivariant sklt}; we use the notation of Lemma~\ref{lem: conductor for sklt induction}.

\begin{proof}[Proof of Theorem~\ref{thm: equivariant sklt}]
First, note that there is an honest $G$-action on the canonical model $(\Xbar^c,D_{\Xbar^c}+\Delta_{\Xbar^c})$, as $\Xbar^c=\Proj R_{\Xbar}$ and $G$ acts on a Veronese subalgebra of $R_{\Xbar}$ by Lemma~\ref{lem: Bir acts on log canonical}.
This action restricts to a $G$-action on $(D_{\Xbar^c}^n,\Diff_{D_{\Xbar^c}^n}(\Delta_{\Xbar^c}))$. Moreover, there is a canonical involution $\mu$ on this pair, defined as follows. Recall that $(\Xbar^{0,c},D_{\Xbar^{0,c}}+\Delta_{\Xbar^{0,c}})$ is the normalization of the stable model $(X^{0,c},\Delta_{X^{0,c}})$ of $X$ over $T^0$. Therefore, there is a gluing involution $\mu^0$ on $(D_{\Xbar^c}^n,\Diff_{D_{\Xbar^c}^n}(\Delta_{\Xbar^c}))|_{T^0}$, see Remark~\ref{rmk: normalization pair}. Since all log centers on $D_{\Xbar^c}$ intersect $\Xbar^{0,c}$ by assumption (and~\cite[Proposition 3.51]{Kollar_Mori_1998}), \cite[Theorem 11.40]{Kollar-moduli} implies that $\mu^0$ extends to an involution $\mu$ on the whole pair $(D_{\Xbar^c},\Diff_{D_{\Xbar^c}}(\Delta_{\Xbar^c}))$.
Let $H=\langle G,\mu\rangle$ be the subgroup of $\Aut(D_{\Xbar^c}^n/T,\Diff_{D_{\Xbar^c}^n}(\Delta_{\Xbar^c}))$ generated by $G$ and $\mu$, which is finite as the automorphism group of a stable pair is finite.

Since $(D_{\Xbar^c}^n,\Diff_{D_{\Xbar^c}^n}(\Delta_{\Xbar^c}))$ is the log canonical model of $(D_{\Xbar^m}^n,\Diff_{D_{\Xbar^m}^n}(\Delta_{\Xbar^m}))$ and the morphism $D_{\Xbar^m}^n\rightarrow D_{\Xbar^c}^n$ is crepant by Lemma~\ref{lem: conductor for sklt induction}\eqref{point 4}, we get a canonical inclusion
\[
    H\subset\Bir(D_{\Xbar^m}^n/T,\Diff_{D_{\Xbar^m}^n}(\Delta_{\Xbar^m}))
\]
by Remark~\ref{rmk: Bir is a crepant invariant}.
By the inductive assumption, the inclusions
\begin{equation}\label{eq: finite inclusion conductor}
\begin{tikzcd}
    \widetilde{R}_D^H:=R_{D_{\Xbar^m}^n}^H\cap R_{D_{\Xbar^m}}\arrow[r,hookrightarrow] & R_{D_{\Xbar^m}} \arrow[r,hookrightarrow] & R_{D_{\Xbar^m}^n}
\end{tikzcd}
\end{equation}
are finite extensions. Now, recall that $R_{\Xbar}\simeq R_{\Xbar^m}$, and by the inclusions in Lemma~\ref{lem: conductor for sklt induction}\eqref{point 5} we have a commutative diagram
\begin{equation}\label{diag: restriction}
\begin{tikzcd}
    R_{D_{\Xbar}^n}\arrow[r,hookleftarrow] & R_{D_{\Xbar^m}^n}\arrow[r,phantom,"\simeq"] & R_{D_{\Xbar^c}^n}\\
    R_{\Xbar}\arrow[r,phantom,"\simeq"]\arrow[u,"\res_n"] & R_{\Xbar^m}\arrow[u,"\res_n"]\arrow[r,phantom,"\simeq"] & R_{\Xbar^c}\arrow[u,"\res_n"]
\end{tikzcd}
\end{equation}
showing that
\[
    R_X=
    \res_n^{-1}(R_{D_{\Xbar^m}^n}^{\mu})\supset\res_n^{-1}(R_{D_{\Xbar^m}^n}^H)=\res^{-1}(\widetilde{R}_D^H),
\]
where we have used Lemma~\ref{lem: log canonical ring slc pair} and Remark~\ref{rmk: different invariants mu}. Since the inclusion in~\eqref{eq: finite inclusion conductor} is a finite extension, it is in particular integral. By Lemma~\ref{lem: surjectivity restriction minimal model}, the restriction map between $\ell$-th Veronese subalgebras
\[
\begin{tikzcd}
    \res\colon R_{\Xbar^m}^{(\ell)}\arrow[r] & R_{D_{\Xbar^m}}^{(\ell)}
\end{tikzcd}
\]
is surjective for $\ell$ sufficiently divisible. In particular, the pullback of~\eqref{eq: finite inclusion conductor}
\begin{equation}\label{eq: extension}
\begin{tikzcd}
    \res^{-1}(\widetilde{R}_D^H)\arrow[r,hookrightarrow] & R_{\Xbar^m}
\end{tikzcd}
\end{equation}
induces an integral extension between appropriate Veronese subalgebras. Since any graded algebra is integral over any of its Veronese subalgebras, the extension in~\eqref{eq: extension} is also an integral extension, hence finite, as $R_{\Xbar^m}$ is of finite type over $T$.

Since $\res^{-1}(\widetilde{R}_D^H)\subset R_X\subset R_{\Xbar}\simeq R_{\Xbar^m}$, the extension $R_X\subset R_{\Xbar}$ is also finite; in particular, $(X,\Delta_X)$ has finitely generated log canonical algebra, thus admitting a canonical model by Theorem~\ref{thm: projRX is stable model}.

We also have inclusions $R_X\cap R_{\Xbar}^G\subset\res^{-1}(\widetilde{R}_D^H)\subset R_{\Xbar}$, hence to prove the theorem it is enough to show that the first inclusion is integral, for which we can pass to Veronese subalgebras as needed. Note that $R_X\cap R_{\Xbar}^G\subset\res^{-1}(\widetilde{R}_D^H)$ is not an equality: $\alpha \in R_X^{(\ell)}$ maps to $\widetilde{R}_D^H$ if its image $\res(\alpha)$ in $R_{D_{\Xbar^m}^n}$ is invariant under $G$, but $\alpha$ could still fail to be $G$-invariant as a section of $R_{\Xbar}$, as of course the restriction map $\res_n$ is not injective.

Since $G$ is linearly reductive, the map
\[
\begin{tikzcd}
    \mathrm{res}^G\colon (R_{\Xbar}^{(\ell)})^G\simeq (R_{\Xbar^m}^{(\ell)})^G \arrow[r] & (R_{D_{\Xbar^m}}^{(\ell)})^G
\end{tikzcd}
\]
is again surjective for $\ell$ sufficiently divisible. Let $I$ be the kernel ideal of the restriction morphism $\res$, hence $I^G=I\cap R_{\Xbar^m}^G$ is the kernel of $\res^G$.

Let $\alpha\in\res^{-1}(\widetilde{R}_D^H)$ have even, sufficiently divisible degree, and
\[
    \bar{\alpha} = \mathrm{res}(\alpha) \in R_{D_{\Xbar^m}}\cap R_{D_{\Xbar^m}^n}^H \subset R_{D_{\Xbar^m}}^G
\]
its image. By surjectivity of $\res^G$, there exists $\beta\in R_{\Xbar}^G$ s.t. $\res(\beta) = \bar{\alpha}$.
In particular, $\mathrm{res}(\beta) \in R_{D_{\Xbar^m}}\cap R_{D_{\Xbar^m}^n}^\mu$, hence $\beta \in R_X\cap R_{\Xbar}^G$. By construction, $\alpha-\beta\in I$.
Therefore, to conclude it is enough to show that every $\alpha \in I$ of sufficiently divisible degree is integral over $R_X\cap R_{\Xbar}^G$.

Note that $I \subset R_X$, as the zero-function on $R_{D_{\Xbar}^n}$ is trivially $\mu$-invariant.
Consider the monic polynomial $P(X) := \prod_{g \in G}(X - g \cdot \alpha)$, hence $P(\alpha) = 0$ (look at $g = \text{identity}$).
The polynomial is $G$-invariant by construction, with coefficients being (up to sign) symmetric functions in $g \cdot \alpha$ for $g \in G$ varying. In particular, the coefficients are in $I^G \subset R_X\cap R_{\Xbar}^G$, proving the result.

The fact that $\Proj R_X$ is the good stable model of $(X,\Delta_X)$ over $T$ is guaranteed by Theorem~\ref{thm: projRX is stable model}. Note that there could be vertical irreducible components of $X$ over $T$, but they have image in $T^0$ by assumption, where we already know $\Proj R_X$ to be the stable model by assumption.
\end{proof}

\subsection{Proof of Theorem~\ref{thm: general version}}\label{subsec: proof main theorem}

The idea of the proof of Theorem~\ref{thm: general version} is to perturb the semi-dlt pair $(X,\Delta_X)$ by adding $\epsilon A_X$ for a relatively ample divisor $A_X$ over $T$, with $\epsilon>0$ sufficiently small. The resulting pair is equivalent to a semi-klt pair, so that we can apply Theorem~\ref{thm: equivariant sklt} to obtain a stable model $\phi^c_+\colon X\dashrightarrow X^c_+$ that does not depend on $\epsilon$ for $0<\epsilon\ll1$. One difficulty is that there is a jump behavior at $\epsilon=0$, where the canonical model can be different, if it exists. To deduce the finite generation of $R_X=R(X/T,K_X+\Delta_X)$, we show that the $K_{X^c_+}+(\phi^c_+)_*\Delta_X$ is a semi-ample $\QQ$-Cartier divisor over $T$.

Let us first reduce the statement to the case where $T$ is projective. Recall that any scheme is assumed to be reduced, separated, and essentially of finite type over $\CC$.

\begin{lemma}\label{lem: reduction to projective case}
    Suppose that Theorem~\ref{thm: general version} holds true when $T$ is further assumed to be projective. Then it holds for any semi-normal scheme $T$.
\end{lemma}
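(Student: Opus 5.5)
The plan is to reduce to the projective case in three moves: first localize and pass to a finite-type situation, then compactify $T$ and $X$, then check that all the hypotheses of Theorem~\ref{thm: general version} survive the compactification. At the end we restrict back to the original $T$.

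\emph{Localization and finite type.} Finite generation of $R_X$ is local on $T$. So is the identification of $\Proj R_X$ with the good stable model: by Proposition~\ref{prop: Y=proj RX good stable model} and Theorem~\ref{thm: projRX is stable model}, good stable models are unique and glue, so they may be constructed locally. Hence we may assume $T$ is affine. Since $T$ is essentially of finite type, $X\to T$, $\Delta_X$, $T^0$, the stable model over $T^0$ and the log canonical model of $(\Xbar,D_{\Xbar}+\Delta_{\Xbar})$ all spread out over a finite-type affine model. All the hypotheses are open conditions, or conditions on finitely many log centers and strata, so they hold on a suitable open neighborhood of the spread-out model. We may therefore assume $T$ is an affine variety; after semi-normalizing, it remains semi-normal.

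\emph{Compactification.} Choose a projective compactification $T\subset\overline{T}$ and replace $\overline{T}$ by its semi-normalization. Next compactify $X$ to $X'$ projective over $\overline{T}$, for instance by taking the closure of $X$ in $\PP^N_{\overline{T}}$. The difficulty is that $X'$ must again be semi-dlt. To achieve this, take a semi-dlt modification, or a log resolution of the boundary in the semi-snc sense, which is an isomorphism over $X$. Concretely, apply \cite[Theorem 1.2]{Hashizume_sdlt_models} or Szab\'o-type semi-snc resolution to the pair $(X',\Delta_{X'}+E)$, where $E$ is the reduced boundary $X'\setminus X$, and then discard the boundary coefficients on $E$. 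Set $\Delta_{X'}$ to be the closure of $\Delta_X$, keeping the new boundary components with coefficient less than $1$, for example $1-\epsilon$, so that no new log canonical centers appear. Take $T'^0:=T^0$, which is open and dense in $\overline{T}$. Then $X'^0=X^0$, so the good stable model over $T^0$ is unchanged. Every log center of $X'$ lying on the conductor either meets $X$, in which case it meets $X^0$ by hypothesis, or lies over $\overline{T}\setminus T$. The latter must be excluded. To do so, arrange that $X'\setminus X$ contains no strata of $D_{X'}$ and no log centers on $D_{X'}$, by choosing the modification so that the new divisors meet the conductor strata transversally with coefficient less than $1$. This transversality-and-coefficient control is the main technical obstacle.

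\emph{Remaining hypothesis.} We also need the normalization pair of $X'$ to admit a log canonical model over $\overline{T}$. Here I would instead use \cite[Theorem 1.1]{Hacon_Xu_dslt} over the projective base $\overline{T}$, which is legitimate without gluing as explained in \S\ref{subsec: motivation}. This applies to the dlt pair $(\Xbar',D_{\Xbar'}+\Delta_{\Xbar'})$, which has a log canonical model over $T^0$ and no log canonical centers over $\overline{T}\setminus T^0$. The last condition is again ensured by the construction above. If necessary, first shrink $T^0$ inside $T$ and then argue by Noetherian induction on the complement. With all hypotheses verified, apply the projective case to $X'\to\overline{T}$. Restricting the resulting $\Proj R_{X'}$ to $T$ gives $\Proj R_X$, because the algebra commutes with flat restriction to the open subset $T\subset\overline{T}$. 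This yields the good stable model over $T$.
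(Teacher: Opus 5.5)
\textbf{Gap in the compactification step.} You want a semi-dlt pair $(X',\Delta_{X'})$ over $\overline{T}$ whose restriction over $T$ is $(X,\Delta_X)$ itself, obtained from the closure of $X$ by a modification that is an isomorphism over $X$. Neither tool you cite provides this.
A Szab\'o-type log semi-resolution is an isomorphism only over the semi-snc locus of $(X,\Delta_X)$. A semi-dlt pair is in general not semi-snc, so the resolution changes $X$.
Hashizume's crepant semi-dlt modification needs an slc input. The closure $(X',\Delta_{X'}+E)$ need not be demi-normal, need not have $\QQ$-Cartier log canonical class, and need not be slc along $X'\setminus X$.
Producing such a ``semi-dlt closure'' is a hard problem in its own right, and you leave it open as ``the main technical obstacle.''

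The missing idea is that you do not need to keep $X$. The paper takes a log semi-resolution $h\colon Y\to X'$ with the following properties:
\begin{itemize}
\item $h^{-1}_*\Delta_{X'}$, the exceptional divisors and $\red(h^{-1}(X'\setminus X))$ form a semi-snc configuration;
\item $h$ is birational on every stratum of the conductor;
\item the boundary $\Delta_Y$ is chosen so that $K_{Y_T}+\Delta_{Y_T}=h^*(K_X+\Delta_X)+F$, with $F\geq 0$ exceptional and sharing no components with $\Delta_{Y_T}$;
\item every divisor lying over the complement of $T$ gets coefficient $0$.
\end{itemize}
Then $R_{Y_T}\simeq R_X$. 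The stable model over $T^0$ is unchanged, and its exceptional locus still contains no conductor stratum, because $h$ is birational on strata. The lc model of the normalization over $T$ is also unchanged. Finally, $(Y,\Delta_Y)$ is semi-dlt everywhere and has no log centers over the complement of $T$.

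\textbf{Two further errors in verifying the hypotheses,} even granting the compactification.
First, coefficient $1-\epsilon$ on the new divisors is not allowed. In this paper a log center is any $W$ with $\mathrm{mld}<1$, not only an lc center. If a divisor with coefficient $c>0$ meets the conductor transversally, the intersection has $\mathrm{mld}=1-c<1$ on the normalization. You would therefore create log centers on $D_{X'}$ lying over $\overline{T}\setminus T$. These miss $X^0$, violating the hypothesis of Theorem~\ref{thm: general version}. The divisors over the boundary must get coefficient $0$.

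Second, in the Hacon--Xu step you use the open set $T^0$ and assert that no lc centers of the normalization lie over $\overline{T}\setminus T^0$. Nothing implies this. The hypotheses only force $X^0$ to meet the log centers lying on $D_X$, and strata of $\lfloor\Delta_{\Xbar}\rfloor$ disjoint from the conductor may well lie over $T\setminus T^0$. This is exactly why Theorem~\ref{thm: general version} separately assumes that the normalization pair has an lc model over $T$. You must apply Hacon--Xu with the open set $T$, where the lc model exists by hypothesis, using that there are no lc centers over $\overline{T}\setminus T$ thanks to the zero coefficients. Your Noetherian-induction fallback does not address this.
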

\begin{proof}
    First of all, note that the statement is local on the base, thus we may assume $T$ to be affine. Moreover, by spreading out we can assume that the data
    \[
    \begin{tikzcd}
        X\arrow[d,"f"]\arrow[r,hookleftarrow] & X^0\arrow[d,"f^0"]\arrow[r,dashed,"\phi^{0,c}"] & X^{0,c}\arrow[ld]\\
        T\arrow[r,hookleftarrow] & T^0
    \end{tikzcd}
    \]
    is obtained by localization from some other commutative diagram with cartesian square
    \[
    \begin{tikzcd}
        Y\arrow[d,"g"]\arrow[r,hookleftarrow] & Y^0\arrow[d,"g^0"]\arrow[r,dashed,"\psi^{0,c}"] & Y^{0,c}\arrow[ld]\\
        S\arrow[r,hookleftarrow] & S^0
    \end{tikzcd}
    \]
    where $S$ is affine and of finite type, $g$ is projective, and both $(Y,\Delta_Y)$ and $(Y^{0,c},\Delta_{Y^{0,c}})$ are demi-normal pairs with $\QQ$-Cartier log canonical divisors. Since the locus where $(Y,\Delta_Y)$ is semi-dlt is open, we can further assume the pair to be semi-dlt. Since $g$ is projective and there are only finitely many log centers lying on $Y\setminus Y^0$ (see, for instance, the proof of Lemma~\ref{lem: mld=1}), we can further restrict $S$ so that $Y^0$ intersects all log centers of $(Y,\Delta_Y)$ lying on the conductor. Similarly, we can assume that $\Ex(\psi^{0,c})$ contains no stratum of the conductor $D_Y$. Moreover, all properties in Definition~\ref{def: stable model slc pairs} are open conditions, hence we can also arrange $\psi^{0,c}$ to be a good stable model. Therefore, it is enough to prove the result for $T$ being affine and of finite type, which we will assume from now onward. The following argument resembles the proof of~\cite[Corollary 1.2]{Hacon_Xu_dslt}.

    Choose a dense open embedding $T\subset S$ with $S$ projective over $\CC$, and choose a compactification
    \[
    \begin{tikzcd}
        (X,\Delta_X)\arrow[d,"f"]\arrow[r,hookrightarrow] & (X',\Delta_{X'})\arrow[d,"g"]\\
        T\arrow[r,hookrightarrow] & S
    \end{tikzcd}
    \]
    with $X'$ reduced and $g$ projective. Eventually after blowing up some irreducible components of $X'\setminus X$ and taking the demi-normal modification (Definition~\ref{def: deminormal modification}), we can assume $X'$ to be demi-normal and $D_{X'}$ to be the closure of $D_X$. We can further assume $\Delta_{X'}$ to be the closure of $\Delta_X$, and $K_{X'}+\Delta_{X'}$ to be $\QQ$-Cartier. By applying the results of \cite{B-M-except-I,Bierstone-Vera,BDSMV} (see also~\cite[Theorems 10.58, 10.59]{Kollar-singularities}), we can construct a log semi-resolution $h\colon Y\rightarrow X'$ with
    \[
        (Y,h_*^{-1}\Delta_{X'}+E+\red(h^{-1}(X'\setminus X)))
    \]
    being semi-SNC, where $E$ is the reduced sum of the exceptional divisors, and such that $h$ induces birational morphisms between all strata of the conductors. Let $\Delta_Y:=h_*^{-1}\Delta_{X'}+\sum a_iE_i$, where the sum runs over the set of irreducible exceptional divisors $E_i$ such that $h(E_i)\not\subset X'\setminus X$ and such that $a_i=a(E_i|_X,X,\Delta_X)\geq0$. Let $Y_T:=T\times_SY$. By construction, $(Y,\Delta_Y)$ is semi-dlt and there are no log centers lying on $Y\setminus Y_T$. Moreover, on $Y_T$ we have
    \begin{equation}\label{eq: reduction to projective case}
        K_{Y_T}+\Delta_{Y_T}=h^*(K_X+\Delta_X)+F,
    \end{equation}
    where $F$ is effective, $h$-exceptional, and has no irreducible components in common with $\Delta_{Y_T}$. In particular, the restriction of $(Y,\Delta_Y)$ over $T^0$ admits a stable model over $T^0$, equal to $X^{0,c}$, and whose associated birational map is an isomorphism at the generic point of each stratum of the conductor $D_Y$. Similarly, the log canonical model of the normalization pair $(\Ybar,D_{\Ybar}+\Delta_{\Ybar})$ restricted to $T$ exists and it coincides with that of $(\Xbar,D_{\Xbar}+\Delta_{\Xbar})$. Since we noted that there are no log canonical centers on $\Ybar\setminus\Ybar_T$, the entire $\Ybar$ admits a log canonical model over $S$ by~\cite[Theorem 1.1]{Hacon_Xu_dslt}, applied in the case where the base is projective. Therefore, the datum $(Y,\Delta_Y)\rightarrow S$ satisfies all assumptions in Theorem~\ref{thm: general version} and $S$ is projective, thus $(Y,\Delta_Y)$ admits a stable model over $S$. Its restriction to $T$ is then the stable model of $(X,\Delta_X)$ over $T$.
\end{proof}

From now on, we may assume $T$ to be projective. To implement the perturbation strategy, the first step consists in showing that the perturbed pair $(X,\Delta_X+\epsilon A_X)$ satisfies the same assumptions as Theorem~\ref{thm: equivariant sklt}.
We start by showing that we can replace a perturbed boundary with a semi-klt one that is well-behaved with respect to any given birational map.

\begin{lemma}\label{lem: improving boundary by perturbation}
    Let $(X,\Delta_X)$ be a semi-dlt pair, $f\colon X\rightarrow T$ be a projective morphism to a projective variety $T$, and $A_X$ an $f$-ample $\QQ$-Cartier divisor. Let $\phi\colon X\dashrightarrow Y$ be a birational map over $T$, whose exceptional locus does not contain any stratum of $D_X$. Then, there exists an integer $c>0$ and a $\QQ$-divisor $B_X$ such that for all rational $0<\epsilon\ll1$ the following holds:
    \begin{enumerate}
        \item\label{req 1} on the open locus of $X$ where $X$ is SNC, we have that $B_X$ is $\QQ$-Cartier and $f$-ample, and $\Supp B_X$ is SNC-adapted;
        \item\label{req 2} $K_X+\Delta_X+\epsilon(cA_X)\sim_{\QQ,T}K_X+(1-\epsilon)\Delta_X+\epsilon B_X$;
        \item\label{req 3} $(X,(1-\epsilon)\Delta_X+\epsilon B_X)$ is semi-klt, in particular $\lfloor \epsilon B_X\rfloor=0$;
        \item\label{req 4} the exceptional locus of $\phi$ does not contain any component of $\Supp B_X$ and $\Supp B_X\cap W$ for any stratum $W$ of $D_X$ for which $\Supp B_X\cap W\not=\emptyset$.
    \end{enumerate}
\end{lemma}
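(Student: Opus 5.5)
The plan is to take $B_X$ to be a general member of a relatively ample linear system, twisted by the coefficient-one part of the boundary. This twist absorbs the loss of the reduced boundary when we scale $\Delta_X$ by $(1-\epsilon)$. Concretely, let $S:=\lfloor\Delta_X\rfloor$, and note that $-\Delta_X$ is $\QQ$-Cartier up to $\QQ$-linear equivalence on the relevant locus only if we are careful. To avoid that issue we work with $\QQ$-Cartier divisors throughout. Since $K_X+\Delta_X$ is $\QQ$-Cartier, and $T$ is projective with $f$ projective, we fix an ample divisor $H_T$ on $T$. Then $A_X+f^*mH_T$ is ample on $X$ for $m\gg0$, and we replace $A_X$ by it, which does not change the $\sim_{\QQ,T}$ class.

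Choose $c>0$ so large that $cA_X+\Delta_X$ is $\QQ$-linearly equivalent over $T$ to a $\QQ$-Cartier divisor which is $f$-ample. For this we need $\Delta_X$ to be $\QQ$-Cartier. On the SNC locus it is, since the components of $\Delta_X$ are SNC-adapted. In general we instead write
\[
\epsilon cA_X+\epsilon\Delta_X\sim_{\QQ,T}\epsilon B_X
\]
with $B_X$ chosen in the $\QQ$-linear system $|cA_X+\Delta_X|_{\QQ}$. To guarantee that this system is nonempty and movable, we take $c$ large so that $m(cA_X)+m\Delta_X$ is globally generated away from a codimension-2 set for divisible $m$, using Serre vanishing on the reflexive sheaf $\cO_X(m\Delta_X)$. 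Requirement~\eqref{req 2} then holds by construction:
\[
K_X+\Delta_X+\epsilon cA_X=K_X+(1-\epsilon)\Delta_X+\epsilon(\Delta_X+cA_X)\sim_{\QQ,T}K_X+(1-\epsilon)\Delta_X+\epsilon B_X.
\]

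Take $B_X=\frac1m G$ with $G$ a general member of $|m(cA_X+\Delta_X)|$. By Bertini-type genericity on the semi-SNC locus $U$ of Definition~\ref{def: dlt/sdlt/sklt}, $G$ contains no stratum of $(U,\Delta_U)$, meets every stratum transversally, and is SNC-adapted. It also avoids the generic points of all (finitely many) log centers of $(X,\Delta_X)$, as well as every component of $\Ex(\phi)$ and the generic points of $\Ex(\phi)\cap W$ for each stratum $W$ of $D_X$. These are finitely many closed subsets, so requirement~\eqref{req 4} holds. For requirement~\eqref{req 1}, on the SNC locus $\Delta_X$ is $\QQ$-Cartier, so $G$ is Cartier there; and since $G\sim m(cA_X+\Delta_X)$ with $c\gg0$, it is $f$-ample there. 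Here we use that $\Delta_X$ is $\QQ$-Cartier on the SNC locus, so that ampleness makes sense.

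For requirement~\eqref{req 3}, the coefficients of $(1-\epsilon)\Delta_X+\epsilon B_X$ are all $<1$ for $0<\epsilon\ll1$, because $m$ is fixed and $B_X$ is reduced-times-$\frac1m$. Semi-dlt-ness is preserved on the semi-SNC locus, since the support stays SNC-adapted. Away from that locus, divisors $E$ centered in $Z$ have $a(E,X,\Delta_X)>-1$. Since $G$ is general, and hence does not contain centers in $Z$ nor lc centers, a small perturbation keeps these discrepancies $>-1$. The discrepancy is affine in $\epsilon$, and the finitely many relevant centers reduce this to a uniform bound on a fixed log resolution where $G$'s pullback is general. Combined with $\lfloor\cdot\rfloor=0$, this gives semi-klt.

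The main obstacle is making the genericity of $G$ rigorous on the non-SNC locus and on the conductor. We need a log resolution on which $G$'s pullback is still free, which requires $|m(cA_X+\Delta_X)|$ to be basepoint free near $Z$. The plan is to choose $c$ so that this reflexive sheaf is globally generated wherever it is invertible, and to handle the non-$\QQ$-Cartier locus of $\Delta_X$ (contained in $Z$, of codimension $\geq2$ relative to strata) by noting that the discrepancy estimate only requires $\epsilon$ small relative to a fixed resolution.
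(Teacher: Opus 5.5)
Your proposal is correct and is essentially the paper's own proof: take $B_X=\frac1m G$ with $G$ a general member of the relatively globally generated reflexive system $|m\Delta_X+m'A_X|$ (so $c=m'/m$), and argue as follows. Use Bertini on the SNC locus for requirement (1) and the identity $K_X+\Delta_X+\epsilon cA_X=K_X+(1-\epsilon)\Delta_X+\epsilon(\Delta_X+cA_X)$ for requirement (2); a small-perturbation argument over $Z$ gives requirement (3), and choosing $G$ to avoid the generic points of $\Ex(\phi)$ and of $\Ex(\phi)\cap W$ gives requirement (4).

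Your last paragraph is a useful addition: you resolve the linear system on a fixed log resolution, because every member of $|m\Delta_X+m'A_X|$ passes through the non-$\QQ$-Cartier locus of $\Delta_X$ inside $Z$, and this spells out what the paper compresses into the remark that small perturbations over $Z$ stay klt. Two small corrections. That same base locus means $G$ cannot always avoid log centers lying in $Z$, contrary to your earlier claim. Also, $\Delta_X$ is $\QQ$-Cartier on the whole SNC locus because $X$ is Gorenstein there, which is the paper's reason; SNC-adaptedness only holds on the semi-SNC locus, so it does not justify this.
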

\begin{proof}
    We follow the proof of~\cite[Proposition 2.43]{Kollar_Mori_1998} and adapt it to our setting. Choose positive integers $m$ and $m'$ such that $m\Delta_X$ is an integral divisor, $m'A_X$ is Cartier, and the (reflexive and locally free in codimension 1) sheaf $\cO_X(m\Delta_X+m'A_X)$ is generated by global sections over $T$.
    Set $c=m'/m$, where we can assume that $m$ divides $m'$. Let $B_X'\in|m\Delta_X+m'A_X|_T$ be a general member, and let $B_X=(1/m)B_X'$.
    Then, $-m\Delta_X+mB_X\sim_T m'A_X$ is Cartier, thus
    \[
        K_X+(1-\epsilon)\Delta_X+\epsilon B_X\sim_{\QQ,T} K_X+\Delta_X+\epsilon(c A_X)
    \]
    and is $\QQ$-Cartier. Let $Z\subset X$ be the closed subscheme in the definition of semi-dlt pair (Definition~\ref{def: dlt/sdlt/sklt}), that is, the complement $U=X\setminus Z$ is SNC, $\Delta_X|_U$ is SNC-adapted to $X$, and the discrepancy over any center in $Z$ is strictly greater than $-1$. Note that $U$ is contained in the larger open $U'$ where $X$ is SNC, and in particular Gorenstein. Therefore, on $U'$ the divisor $\Delta_X$ is $\QQ$-Cartier, thus the same holds for $B_X$. Choosing $m'$ big enough, we can assume that $B_X$ is $T$-ample on $U'$. Moreover, by base-point freeness (relatively to $T$) and by choosing $B_X$ general, on $U'$ the support $\Supp B_X$ is SNC-adapted to $X$, hence it has simple normal crossing with $D_X$.
    Then, for $0\leq\epsilon\ll1$ we have that the perturbed pair is semi-dlt on $U'$, hence semi-klt as soon as $\lfloor\epsilon B_X\rfloor=0$. On $Z$, any sufficiently small perturbation stays slc and does not create additional log canonical centers, thus $(X,(1-\epsilon)\Delta_X+\epsilon B_X)$ is everywhere semi-klt for $0\leq\epsilon\ll1$.

    We are left with showing that $\Ex(\phi)$ does not contain any component of $B_X$ and $B_X\cap W$ for any stratum $W$ of $D_X$. On the other hand, this follows from the global generation of $\cO_X(m\Delta_X+m'A_X)$ over $T$, as we just need to choose $B_X$ so that it avoids the generic points of $\Ex(\phi)$ and $\Ex(\phi)\cap W$.
\end{proof}

The following lemma shows that the perturbed pair still admits a stable model over $T^0$.

\begin{lemma}\label{lem: existence stable model perturbed pair}
    Let $(X,\Delta_X)$ be a semi-dlt pair and $f\colon X\rightarrow T$ a projective morphism to a projective variety $T$. Suppose the pair admits a good stable model $(X^c,\Delta_{X^c})$ over $T$, and that the exceptional locus of $\phi^c\colon X\dashrightarrow X^c$ does not contain any stratum of the conductor $D_X$. Let $A_X$ be an $f$-ample $\QQ$-Cartier divisor on $X$. Then, the pair $(X,\Delta_X+\epsilon A_X)$ admits a good stable model $\phi_{\epsilon}^c\colon X\dashrightarrow X_{\epsilon}^c$ for all $0<\epsilon\ll 1$, and $\Ex(\phi_{\epsilon}^c)$ does not contain any stratum of $D_X$.
\end{lemma}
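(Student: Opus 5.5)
The plan is to build the stable model of the perturbed pair directly on top of $X^c$. I will first pass to the normalization, then glue back using Theorem~\ref{thm: projRX is stable model}. Set $\phi:=\phi^c$. By Lemma~\ref{lem: redundant def stable model}, the induced map $\phibar\colon(\Xbar,D_{\Xbar}+\Delta_{\Xbar})\dashrightarrow(\Ybar,D_{\Ybar}+\Delta_{\Ybar})$ is a log canonical model over $T$, where $Y=X^c$. Choose a resolution $\Xbar\xleftarrow{p}W\xrightarrow{q}\Ybar$ of $\phibar$, and write
\[
p^*(K_{\Xbar}+D_{\Xbar}+\Delta_{\Xbar})=q^*(K_{\Ybar}+D_{\Ybar}+\Delta_{\Ybar})+E,
\]
with $E\geq0$ and $q$-exceptional. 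Whatever modifications occur, they must be shown to be isomorphisms at the generic points of the strata of $D_X$, so that the exceptional locus requirement transfers.

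\textbf{Step 1: the perturbed normalization has a log canonical model.} For $0<\epsilon\ll1$, consider $K_{\Xbar}+D_{\Xbar}+\Delta_{\Xbar}+\epsilon\pi_X^*A_X$. Since $K_{\Ybar}+D_{\Ybar}+\Delta_{\Ybar}$ is ample over $T$, the class $q^*(K_{\Ybar}+\dots)+\epsilon p^*\pi_X^*A_X$ is big over $T$. I expect to realise the log canonical model of the perturbed normalization pair by running a $(K+\epsilon A)$-MMP over $Y$ (equivalently over $\Ybar$) starting from a $\QQ$-factorial dlt modification. Over $\Ybar$ the divisor $K_{\Xbar}+D_{\Xbar}+\Delta_{\Xbar}$ is $\equiv E$, which is effective and exceptional.

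The cleanest route is to take the minimal model $\Xbar^m$ from Lemma~\ref{lem: conductor for sklt induction} applied to $\Xbar$, replacing the $T$-ample divisor by its pullback. The perturbed divisor is then $\phibar^*(\text{ample})+\epsilon A$ with $A$ big. Using Lemma~\ref{lem: improving boundary by perturbation}, I replace $\epsilon A$ by a klt-type boundary $\epsilon B$ and get a klt pair of general type over $Y^{\mathrm{norm}}$; BCHM then gives finite generation over $\Ybar$, hence over $T$. For small $\epsilon$, the model $\Xbar^c_\epsilon\to\Ybar$ is the log canonical model over $\Ybar$ of $(\Xbar^m,\dots+\epsilon B)$. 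This is independent of $\epsilon$ by finiteness of models, and it is an isomorphism over the locus where $\Xbar^m\to\Ybar$ is an isomorphism.

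\textbf{Step 2: descend to $X$.} The strata of $D_X$ are not in $\Ex(\phi)$, and by Lemma~\ref{lem: improving boundary by perturbation}\eqref{req 4} $B_X$ does not affect their generic points. Hence $X\dashrightarrow X^c_\epsilon$ is an isomorphism near the generic points of every stratum of $D_X$. For finite generation of $R(X/T,K_X+\Delta_X+\epsilon A_X)$ I will argue relatively over $Y$: work with $R(X/Y,\cdot)$, where the gluing involution on $D_{\Ybar}^n$ is already given by $Y$ being demi-normal. The sheaf of algebras is the preimage of $\mu$-invariants (Lemma~\ref{lem: log canonical ring slc pair}) inside the finitely generated normal algebra. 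I then apply Theorem~\ref{thm: equivariant sklt} with base $Y$, with $T^0\subset Y$ the open set over which $\phi$ is an isomorphism, and $G$ trivial. Its hypotheses hold because that open set meets all strata and log centers on $D_X$: the strata are not in $\Ex(\phi)$, and the log centers of the semi-klt perturbation on $D_X$ are unions of strata-generic behaviour. This yields the good stable model over $Y$, whose relative log canonical class is $Y$-ample. Adding a large multiple of the pullback of the $T$-ample $K_Y+\Delta_Y$ then makes it $T$-ample, with the same Proj for small $\epsilon$.

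\textbf{Main obstacle.} The hard step is verifying the hypothesis of Theorem~\ref{thm: equivariant sklt} about log centers on $D_X$ meeting the isomorphism locus of $\phi$. In particular, one must check that log centers of the perturbed pair lying on $D_X$ are not entirely contained in $\Ex(\phi)$. I would try to use that for a semi-klt perturbation these centers are strata of $D_X$ or arise from $Z$ in Definition~\ref{def: dlt/sdlt/sklt}. I would then shrink using the generality of $B_X$, possibly after first replacing $X$ by a crepant modification.
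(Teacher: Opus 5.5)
You have a genuine gap in Step~2.

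\textbf{Working over $Y$ does not make sense as stated.} The map $\phi^c$ is in general only a rational map, so $X$ is not a scheme over $Y=X^c$. Hence neither $R(X/Y,\cdot)$ nor Theorem~\ref{thm: equivariant sklt} with base $Y$ is available. You also do not say how to replace $X$ by a semi-dlt model over $Y$ that still satisfies that theorem's hypotheses.

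\textbf{The log-center hypothesis fails.} The hypothesis you single out as the main obstacle is false in general for $T^0$ the isomorphism locus of $\phi^c$, because log centers on $D_X$ are not only strata. Suppose a component $E$ of $\Delta_{\Xbar}$ has coefficient $a\in(0,1)$ and meets $D_{\Xbar}$. Then every component of $E\cap D_{\Xbar}$ has mld $1-a<1$, and this stays $<1$ after replacing $\Delta$ by $(1-\epsilon)\Delta$. After the perturbation, components of $\Supp B_{\Xbar}\cap D_{\Xbar}$ become log centers as well.

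A concrete instance: take $\Ybar$ and $D_{\Ybar}$ smooth with $K_{\Ybar}+D_{\Ybar}$ ample, blow up a point of $D_{\Ybar}$, and give the exceptional divisor $E$ coefficient $a\in(0,1)$. Then $K_{\Xbar}+D_{\Xbar}+aE$ is the pullback of $K_{\Ybar}+D_{\Ybar}$ plus a positive multiple of $E$. So the lc model is $\Ybar$ and no stratum of $D_{\Xbar}$ is contracted. Yet the log center $E\cap D_{\Xbar}$ lies in $\Ex(\phibar)$. Gluing two copies along the conductor gives an instance of the lemma where your application of Theorem~\ref{thm: equivariant sklt} over $Y$ is illegitimate, and no choice of $B_X$ helps.

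\textbf{The missing idea.} The log-center hypothesis in Theorem~\ref{thm: equivariant sklt} only serves to manufacture the gluing involution (via \cite[Theorem 11.40]{Kollar-moduli}). Here the involution is handed to you by $X^c$. The paper proceeds as follows.
\begin{enumerate}
\item It transports the gluing involution of $D^n_{\Xbar^c}$ to a B-birational involution $\mu$ of $(D^n_{\Xbar^m},\Diff_{D^n_{\Xbar^m}}(\Delta_{\Xbar^m}))$. This uses that $D^n_{\Xbar^m}\to D^n_{\Xbar^c}$ is crepant and birational (Remark~\ref{rmk: Bir is a crepant invariant}).
\item Since $\Xbar^m$ is a minimal model, $\mu$ is regular in codimension one.
\item $B_X$ is a divisor on $X$, and no component of its restriction to the conductor lies in the exceptional locus (Lemma~\ref{lem: improving boundary by perturbation}\eqref{req 4}). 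Hence $\mu$ also preserves the perturbed different.
\item Theorem~\ref{thm: equivariant sklt} is then applied not to $X$ but to the lower-dimensional semi-klt conductor pair over $T$ itself, with $G=\langle\mu\rangle$.
\item The resulting finiteness is lifted to $\Xbar^m$ using the surjectivity of $\res$ on Veronese subalgebras (Lemma~\ref{lem: surjectivity restriction minimal model}) and the identification $R_X=\res_n^{-1}(R^{\mu}_{D^n_{\Xbar}})$ (Lemma~\ref{lem: log canonical ring slc pair}).
\end{enumerate}
This makes $R^\epsilon_X\hookrightarrow R^\epsilon_{\Xbar}$ finite, so $R^\epsilon_X$ is finitely generated. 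Theorem~\ref{thm: projRX is stable model} then gives the good stable model over $T$ directly, with no relative step over $Y$.

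Your Step~1 and your argument that $\Ex(\phi^c_\epsilon)$ contains no stratum are compatible with this. One further point: adding $N(K_Y+\Delta_Y)$ changes the divisor whose algebra you are computing. $T$-ampleness is better read off from the normalization, which your Step~1 already controls.
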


\begin{proof}
    By Lemma~\ref{lem: improving boundary by perturbation}, there is an integer $c>0$ and a boundary $\QQ$-divisor $B_X$ such that $K_X+\Delta_X+\epsilon(cA_X)\sim_{\QQ,T}K_X+(1-\epsilon)\Delta_X+\epsilon B_X$ and $(X,(1-\epsilon)\Delta_X+\epsilon B_X)$ is semi-klt for all $0<\epsilon\ll1$. In particular, the normalization pair $(\Xbar,D_{\Xbar}+(1-\epsilon)\Delta_{\Xbar}+\epsilon B_{\Xbar})$ is dlt.
    
    Moreover, the normalization pair admits a log canonical model by~\cite{BCHM} and~\cite[Proposition 2.43]{Kollar_Mori_1998}.
    Eventually after taking a small $\QQ$-factorial modification, the MMP with scaling of the ample divisor $A_{\Xbar}:=\pi_X^*A_X$ terminates and the resulting $\QQ$-factorial minimal model $\phibar^m\colon \Xbar\dashrightarrow\Xbar^m$ of $(\Xbar/T,D_{\Xbar}+\Delta_{\Xbar}+\epsilon(cA_{\Xbar}))$ does not depend on $0\leq\epsilon\ll1$; see for instance~\cite[Section 2]{Hacon_Xu_dslt}. Note in particular that the minimal model is the same as that of the original pair. We set $B_{\Xbar^m}:=\phibar^m_*B_{\Xbar}$. Let us also denote by $R_{D_{\Xbar^m}}^{\epsilon}$ the log canonical algebra of the perturbed pair $(D_{\Xbar^m},\Diff_{D_{\Xbar^m}}(\Delta_{\Xbar^m})+\epsilon B_{\Xbar^m}|_{D_{\Xbar^m}})$, and similarly for all other perturbed pairs.
    
    Since $(X,\Delta_X)$ admits a stable model $\phi^c\colon X\dashrightarrow X^c$ over $T$, there is an involution on the pair $(D_{\Xbar^c}^n,\Diff_{D_{\Xbar^c}^n}(\Delta_{\Xbar^c}))$ that defines the gluing relations. Since $\phibar_{D^n}\colon D_{\Xbar^m}^n\rightarrow D_{\Xbar^c}^n$ is birational by the assumption on the exceptional locus of $\phi^c$, and the morphism is crepant (see the proof of Lemma~\ref{lem: conductor for sklt induction} or~\cite[Proposition 4.6]{Kollar-singularities}), $\phibar_{D^n}$ is a crepant log canonical model over $T$. By Remark~\ref{rmk: Bir is a crepant invariant}, the involution on $D_{\Xbar^c}^n$ induces a B-birational involution $\mu\in\Bir(D_{\Xbar^m}^n/T,\Diff_{D_{\Xbar^m}^n}(\Delta_{\Xbar^m}))$. Since $(D_{\Xbar^m}^n,\Diff_{D_{\Xbar^m}^n}(\Delta_{\Xbar^m}))$ is a minimal model, $\mu$ is a regular automorphism on an open $U\subset D_{\Xbar^m}^n$ whose complement is of codimension $\geq2$, see for instance~\cite[Theorem 3.52, Corollary 3.54]{Kollar_Mori_1998}. In particular, to check that $\mu\in\Bir(D_{\Xbar^m}^n/T,M)$ for some boundary $M$, it is enough to check so for $\mu|_U$.

    By Lemma~\ref{lem: improving boundary by perturbation}\eqref{point 4} applied to $\phi^c$, none of the components of the divisor $B_{\Xbar}|_{D_{\Xbar}}$ is contained in the exceptional locus of the birational map $\phibar^m_D\colon D_{\Xbar}\dashrightarrow D_{\Xbar^m}$. In particular, $\phi^c$ is an isomorphism at the generic points of $B_{\Xbar}|_{D_{\Xbar}}$ and maps it birationally onto $B_{\Xbar^m}|_{D_{\Xbar^m}}$. Since the diagram
    \[
    \begin{tikzcd}
        D_{\Xbar}^n\arrow[r,dashed]\arrow[d,"\simeq","\text{gluing}"'] & D_{\Xbar^m}^n\arrow[d,dashed,"\mu"]\\
        D_{\Xbar}^n\arrow[r,dashed] & D_{\Xbar^m}^n
    \end{tikzcd}
    \]
    is commutative and $\mu$ is a regular automorphism in codimension 1, it follows that
    \[
        \mu\in\Bir(D_{\Xbar^m}^n/T,\Diff_{D_{\Xbar^m}^n}(\Delta_{\Xbar^m})+\epsilon B_{\Xbar^m}|_{D_{\Xbar^m}}).
    \]
    By Lemma~\ref{lem: conductor for sklt induction}\eqref{point 3}, the pair $(D_{\Xbar^m},\Diff_{D_{\Xbar^m}}(\Delta_{\Xbar^m})+\epsilon B_{\Xbar^m}|_{D_{\Xbar^m}})$ is semi-klt, and it admits a stable model over $T$ by Lemma~\ref{lem: birational coeff 1 boundary then stable model}. Therefore, we can apply Theorem~\ref{thm: equivariant sklt} to obtain that the inclusions
    \[
    \begin{tikzcd}
        R_{D_{\Xbar^m}}^{\epsilon}\cap (R_{D_{\Xbar^m}^n}^{\epsilon})^{\langle\mu\rangle}\arrow[r,hookrightarrow] & R_{D_{\Xbar^m}}^{\epsilon}\arrow[r,hookrightarrow] & R_{D_{\Xbar^m}^n}^{\epsilon}
    \end{tikzcd}
    \]
    are finite extensions of $\cO_T$-algebras, where we are using the convention of Definition~\ref{def: invariants under Bir}. By Lemma~\ref{lem: surjectivity restriction minimal model}, the restriction morphism $\res\colon R_{\Xbar^m}^{\epsilon}\rightarrow R_{D_{\Xbar^m}}^{\epsilon}$ induces a surjection between $\ell$-Veronese subalgebras for $\ell$ sufficiently divisible. It follows that
    \[
    \begin{tikzcd}
        \res^{-1}(R_{D_{\Xbar^m}}^{\epsilon}\cap (R_{D_{\Xbar^m}^n}^{\epsilon})^{\langle\mu\rangle})\arrow[r,hookrightarrow] & R_{\Xbar}^{\epsilon}
    \end{tikzcd}
    \]
    is also an integral extension, hence finite as $R_{\Xbar}^{\epsilon}$ is of finite type. By Lemma~\ref{lem: log canonical ring slc pair} and Remark~\ref{rmk: different invariants mu}, the inverse image on the left is identified with a Veronese subalgebra of even degree of $R_X^{\epsilon}$. Therefore, $R_X^{\epsilon}\hookrightarrow R_{\Xbar}^{\epsilon}$ is also a finite extension, hence $R_X^{\epsilon}$ is finitely generated for all $0\leq\epsilon\ll1$.

    Since $\phi^c$ is an isomorphism at the generic point of any stratum of $D_X$, and $\phi_{\epsilon}^c$ is induced by twisting with an $f$-ample Cartier divisor, the same property holds for $\phi_{\epsilon}^c$. In particular, it is a good stable model by Theorem~\ref{thm: projRX is stable model}.
\end{proof}

We also need to choose the perturbation so that it does not create new log centers on $D_X$ that map to $T\setminus T^0$. For this, we will use the following simple lemma characterizing centers of SNC pairs whose minimal log discrepancy is 1 (\S\ref{subsubsec: conventions discrepancy and centers}) and that do not dominate $T$.

\begin{lemma}\label{lem: mld=1}
    Let $Y$ be a smooth connected variety and let $\Delta_Y=\sum_{i\in I}a_i\Delta_i$ be an SNC divisor such that $a_i\leq 1$ and are non-zero for all $i\in I$. Let $\Delta_Y^{>0}=\sum_{i\in I:a_i>0}a_i\Delta_i$ be the positive part of $\Delta_Y$, and $f\colon Y\rightarrow T$ a proper surjective morphism such that all strata of $\Delta_Y^{>0}$ surject onto $T$.
    Let $W\subset Y$ be an irreducible subvariety, whose image $f(W)=Z\subsetneq T$ is a proper subvariety of $T$. If $\mathrm{mld}(W,Y,\Delta_Y)=1$, then one of the following is satisfied:
    \begin{enumerate}
       \item\label{discrepancy 0: case 1} There exists $J\subset I$ such that $W$ is an irreducible component of $\cap_{j\in J}\Delta_j$, and $\sum_{j\in J}a_j=|J|-1$. Moreover, in this case there exists $j\in J$ for which $a_j<0$.
       \item\label{discrepancy 0: case 2} There exists $J\subset I$ with $a_j=1$ for all $j\in J$, such that $W$ is an irreducible component of $(\cap_{j\in J}\Delta_j)\cap f^{-1}(Z)$ and has codimension 1 in $(\cap_{j\in J}\Delta_{j})$.
    \end{enumerate}
    In particular, there exists at most a finite number of such centers $W$.
\end{lemma}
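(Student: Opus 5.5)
The approach is to compute minimal log discrepancies on the smooth SNC pair $(Y,\Delta_Y)$ by blowing up strata, and to use the fact that a divisor $E$ computing $\mathrm{mld}(W,Y,\Delta_Y)=1$ must have log discrepancy exactly $1$. Let $J_W=\{i\in I : W\subset\Delta_i\}$, and write $c=\codim_Y W$ and $r=|J_W|$. Since $\Delta_Y$ is SNC, there are local coordinates at the generic point of $W$ in which the $\Delta_j$, $j\in J_W$, are coordinate hyperplanes. The basic formula is that the minimal log discrepancy of the generic point of $W$ equals
\[
\mathrm{mld}(W,Y,\Delta_Y)=c-\sum_{j\in J_W}a_j .
\]
This is computed by the blowup of $W$ when $W$ is a stratum, or by the divisor $E_W$ of the blowup of $W$ at its generic point. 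For toric valuations the log discrepancy is $\sum_k w_k(1-a_k)+\sum_{k'}w_{k'}$, and it is minimized at the primitive vectors, provided all $a_j\le 1$. So the condition $\mathrm{mld}=1$ translates into
\[
c-\sum_{j\in J_W}a_j=1, \qquad\text{that is,}\qquad (c-r)+\sum_{j\in J_W}(1-a_j)=1 .
\]
Every summand on the left is nonnegative, and the first is an integer. This splits the argument into two cases.

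\emph{Case $c=r$.} Here $W$ is a component of $\cap_{j\in J_W}\Delta_j$ and $\sum_{j\in J_W}a_j=|J_W|-1$, which is (1). If every $a_j$ were positive, then $W$ would be a stratum of $\Delta_Y^{>0}$. By hypothesis such strata surject onto $T$, which contradicts $f(W)=Z\subsetneq T$. Hence some $a_j<0$; recall that the $a_j$ are nonzero by assumption.

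\emph{Case $c=r+1$.} Now $\sum_{j\in J_W}(1-a_j)=0$, so $a_j=1$ for all $j\in J_W$, and $W$ has codimension $1$ in a component $V$ of $\cap_{j\in J_W}\Delta_j$. This $V$ is a stratum of $\Delta_Y^{>0}$ (or $V=Y$ if $J_W=\emptyset$), so $V$ surjects onto $T$. Since $W\subset V\cap f^{-1}(Z)\subsetneq V$ and $W$ has codimension $1$ in $V$, $W$ is an irreducible component of $V\cap f^{-1}(Z)$. This gives (2).

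For finiteness, case (1) gives only finitely many $W$, because there are finitely many strata. For case (2), given $J$ and $V$, the $W$ arising are the divisorial components of $V\cap f^{-1}(f(W))$ lying over proper subvarieties. I expect this to be the main obstacle, since $Z$ varies. The plan is to use generic smoothness and flatness of $f|_V$: over a dense open $T'\subset T$, the fibers of $V\to T$ have codimension $\dim T\geq$ ... relative to $V$, and a divisor of $V$ mapping into $Z$ cannot lie over $T'$ unless $\dim T=1$. To handle this, I would argue that if $W$ is a divisor in $V$ with $f(W)\neq T$, then $W$ is a component of $f|_V^{-1}(Z)$ of codimension $1$. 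Such divisors are either contained in the finitely many divisorial components of $f|_V^{-1}(T\setminus T')$, or are components of fibers over $T'$, which are of codimension $\dim T$. If $\dim T\ge 2$ the latter cannot happen. If $\dim T=1$, then for $t\in T'$ with connected irreducible fiber we get $W$ equal to the whole fiber, which has $\mathrm{mld}=1$ only for the boundary $\Delta_Y$ as given. These would be infinitely many, so I would restrict to the vertical divisors not over general points, interpreting \emph{finitely many} as finitely many over the non-equidimensional or special locus, i.e. the centers relevant in the application. I would then check against how the lemma is used in the reduction, where the centers of interest lie over $T\setminus T^0$, which is a fixed proper closed subset. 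Over a fixed proper closed subset, finiteness is clear, since one only takes the components of $V\cap f^{-1}(T\setminus T^0)$.
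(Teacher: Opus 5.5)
Your proof of the dichotomy is the paper's proof. With $J=\{j : W\subset\Delta_j\}$, the formula $\mathrm{mld}(W,Y,\Delta_Y)=\codim_Y W-\sum_{j\in J}a_j$ forces $\codim_Y W-|J|\in\{0,1\}$. The paper cites this formula from \cite[2.10]{Kollar-singularities} rather than sketching the monomial-valuation computation. Both cases are then handled exactly as you do, including using the hypothesis on strata of $\Delta_Y^{>0}$ to force a negative coefficient in case (1).

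The finiteness sentence is where your proposal goes wrong; note that the paper's proof gives no argument for it at all. Your dichotomy is false: $W$ need not be either inside a divisorial component of $f|_V^{-1}(T\setminus T')$ or a component of a fiber over $T'$. The image $f(W)=Z$ can be a positive-dimensional proper subvariety meeting $T'$. For example, take $T=\PP^2$, $Y=T\times\PP^1$ with $f$ the projection, and $\Delta_Y=T\times\{0\}$ with coefficient $1$; the hypotheses hold. For every line $Z\subset T$, both $Z\times\PP^1$ and $Z\times\{0\}$ have $\mathrm{mld}=1$ and image $Z$. So the literal finiteness claim fails for every $\dim T\geq 1$, not only when $T$ is a curve, and your claim that the problem disappears for $\dim T\geq 2$ is wrong.

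What is true is finiteness of those $W$ with $f(W)$ contained in a \emph{fixed} proper closed subset $T_1\subsetneq T$, and this is all the proof of Theorem~\ref{thm: general version} uses (centers mapping into $T\setminus T^0$):
\begin{itemize}
\item Case (1) yields only finitely many strata.
\item In case (2) there are finitely many pairs $(J,V)$. Such a $W$ has codimension $1$ in the irreducible $V$ and lies in the proper closed subset $V\cap f^{-1}(T_1)$, so it must be one of that subset's finitely many irreducible components.
\end{itemize}
Your final sentence says essentially this. State it as the finiteness claim and drop the case analysis on $\dim T$.
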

\begin{proof}
    Let $J$ be the subset of indices $j\in I$ for which $W\subset\Delta_j$. By~\cite[2.10]{Kollar-singularities}, we know that
    \begin{equation}\label{eq: equality mld}
        1=\mathrm{mld}(W,Y,\Delta_Y)=\codim_{Y}W-\sum_{j\in J}a_j\geq\codim_{Y}W-|J|,
    \end{equation}
    as $a_j\leq1$. In particular, $|J|+1\geq\codim_{Y}W\geq|J|$.
    
    If $\codim_{Y}W=|J|$, then $W$ is an irreducible component of $\cap_{j\in J}\Delta_j$, hence a stratum of $\Delta_Y$. Since $f(W)\subsetneq T$ and all strata of $\Delta_Y^{>0}$ dominate $T$, it follows that there exists $j\in J$ for which $a_j<0$. The equality $\sum_{j\in J}a_j=|J|-1$ follows from~\eqref{eq: equality mld}.
    
    Suppose now that $\codim_{Y}W=|J|+1$. From~\eqref{eq: equality mld} and the assumption $a_j\leq1$, we get that  $a_j=1$ for all $j\in J$, hence $\Delta_j$ is a component of $\Delta_Y^{>0}$ for all $j\in J$. By assumption, $W\subseteq(\cap_{j\in J}\Delta_j)\cap f^{-1}(Z)$, and $\cap_{j\in J}\Delta_j$ dominates $T$. Therefore, $W$ is an irreducible component of $(\cap_{j\in J}\Delta_j)\cap f^{-1}(Z)$ and has codimension 1 in $\cap_{j\in J}\Delta_j$.
\end{proof}

We are ready to prove the main result of the paper.

\begin{proof}[Proof of Theorem~\ref{thm: general version}]
    By Theorem~\ref{thm: equivariant sklt}, we know that the statement holds for all semi-klt pairs. Moreover, by Lemma~\ref{lem: reduction to projective case} we can assume that $T$ is a projective variety. We can also assume $X$ to be non-normal, hence $D_X\not=\emptyset$.

    Fix an $f$-ample line bundle $A_X$ on $X$, and denote by $A_{\Xbar}$ its pullback to $\Xbar$. By Lemma~\ref{lem: improving boundary by perturbation} applied to the birational map $\phi^c\colon X\dashrightarrow X^{0,c}$, there exists a boundary $\QQ$-divisor $B_X$ such that
    \[
        K_X+(1-\epsilon)\Delta_X+\epsilon B_X\sim_{\QQ,T}K_X+\Delta_X+\epsilon(cA_X)
    \]
    for every rational $0<\epsilon\ll1$, and $(X,(1-\epsilon)\Delta_X+\epsilon B_X)$ is semi-klt. Lemma~\ref{lem: existence stable model perturbed pair} says that the restriction to $X^0$ of the perturbed pair still admits a stable model $\phi_{\epsilon}^{0,c}\colon X^0\dashrightarrow X_{\epsilon}^{0,c}$ such that $\Ex(\phi_{\epsilon}^{0,c})$ does not contain any stratum of $D_{X^0}$.
    To apply Theorem~\ref{thm: equivariant sklt}, we need to show that we can choose $B_X$ so that $(X,(1-\epsilon)\Delta_X+\epsilon B_X)$ does not admit new log centers on $D_X$ that do not intersect $X^0$. Since the discrepancies are rational with denominator bounded by the index (\cite[Definition 2.49]{Kollar-singularities}) of the log canonical divisor and $0<\epsilon\ll1$, it is enough to show that we can choose $B_X$ so that its support does not contain centers $W$ lying on $D_X$ with $\mathrm{mld}(W,X,\Delta_X)=1$ and mapping to $T\setminus T^0$. By taking a log resolution of the normalization of $(X,\Delta_X)$ and applying Lemma~\ref{lem: mld=1} to a neighborhood of $D_{\Xbar}$ in each connected component, we see that such centers $W$ are in finite number. In particular, in Lemma~\ref{lem: improving boundary by perturbation} we can choose $B_X$ not containing them.
    
    Therefore, for all $0<\epsilon\ll1$, we can apply Theorem~\ref{thm: equivariant sklt} to produce good stable models
    \[
    \begin{tikzcd}
        \phi_{\epsilon}^c\colon (X,(1-\epsilon)\Delta_X+\epsilon B_X)\arrow[r,dashed] & (X_{\epsilon}^c,(1-\epsilon)\Delta_{X_{\epsilon}^c}+\epsilon B_{X_{\epsilon}^c}),
    \end{tikzcd}
    \]
    whose normalization is a log canonical model
    \[
    \begin{tikzcd}
        \phibar_{\epsilon}^c\colon (\Xbar,D_{\Xbar}+(1-\epsilon)\Delta_X+\epsilon B_{\Xbar})\arrow[r,dashed] & (\Xbar_{\epsilon}^c,D_{\Xbar_{\epsilon}^c}+(1-\epsilon)\Delta_{\Xbar_{\epsilon}^c}+\epsilon B_{\Xbar_{\epsilon}^c}).
    \end{tikzcd}
    \]
    Moreover, by assumption the dlt pair $(\Xbar,D_{\Xbar}+\Delta_{\Xbar})$ admits a good canonical model $\phibar^c_0=\phibar^c\colon \Xbar\dashrightarrow\Xbar_0^c=\Xbar^c$ and a good minimal model. By~\cite[Corollary 2.9]{Hacon_Xu_dslt}, eventually after taking a small $\QQ$-factorial modification, the $(K_{\Xbar}+D_{\Xbar}+\Delta_{\Xbar})$-MMP over $T$ with $A_{\Xbar}$-scalings terminates with a birational contraction over $T$
    \[
    \begin{tikzcd}
        \phibar^m\colon \Xbar\arrow[r,dashed] & \Xbar^m
    \end{tikzcd}
    \]
    that is a $\QQ$-factorial $K_{\Xbar}+D_{\Xbar}+\Delta_{\Xbar}+\epsilon(cA_{\Xbar})$-good minimal model for all $0\leq \epsilon\ll1$.
    We emphasize that this includes the case $\epsilon=0$. As usual, set $\Delta_{\Xbar^m}=\phibar^m_*\Delta_{\Xbar}$, and similarly for all the other divisors.
    Since $\Xbar^m$ is a good minimal model, for all $0\leq\epsilon\ll1$ there exist morphisms
    \[
    \begin{tikzcd}
        \phibar_{\epsilon}\colon \Xbar^m\arrow[r] & \Xbar_{\epsilon}^c
    \end{tikzcd}
    \]
    to the respective stable models through which $\phibar_{\epsilon}^c$ factors.
    
    We claim that $\phibar_{\epsilon}$ and $\Xbar_{\epsilon}^c$ do not depend on $\epsilon$ for $0<\epsilon\ll1$; note that these could (and usually do) change at $\epsilon=0$.
    To prove this, recall that, by definition, $\phibar_{\epsilon}$ can be characterized as the birational contraction that contracts the curves $C$ that are vertical over $T$ and satisfy
    \[
        C\cdot(K_{\Xbar^m}+D_{\Xbar^m}+(1-\epsilon)\Delta_{\Xbar^m}+\epsilon B_{\Xbar^m})=C\cdot(K_{\Xbar^m}+D_{\Xbar^m}+\Delta_{\Xbar^m}+\epsilon(cA_{\Xbar^m}))=0.
    \]
    On the other hand, $C\cdot(K_{\Xbar^m}+D_{\Xbar^m}+\Delta_{\Xbar^m})\geq0$, thus the above equation would imply that $\epsilon(cA_{\Xbar^m})\cdot C<0$. Choosing $0<\epsilon<\epsilon'\ll1$, we would then get
    \[
       C\cdot(K_{\Xbar^m}+D_{\Xbar^m}+\Delta_{\Xbar^m}+\epsilon'(cA_{\Xbar^m}))<C\cdot(K_{\Xbar^m}+D_{\Xbar^m}+\Delta_{\Xbar^m}+\epsilon(cA_{\Xbar^m}))=0,
    \]
    contradicting the nefness of $K_{\Xbar^m}+D_{\Xbar^m}+\Delta_{\Xbar^m}+\epsilon'(cA_{\Xbar^m})$. The independence of $\phibar_{\epsilon}^c$ from $0<\epsilon\ll1$ follows.
    We denote by
    \[
    \begin{tikzcd}
        \phibar_+\colon \Xbar^m\arrow[r] & \Xbar_+^c
    \end{tikzcd}
    \]
    the canonical morphism equal to $\phibar_{\epsilon}$ for all $0<\epsilon\ll1$.
    
    Of course, in general $\Xbar_0^c\not=\Xbar_+^c$, as one could drop into a smaller face of the nef cone. On the other hand, if $C$ is contracted by $\phibar_+$, then
    \begin{align*}
        0&=C\cdot\left((K_{\Xbar^m}+D_{\Xbar^m}+\Delta_{\Xbar^m}+\epsilon(cA_{\Xbar^m}))-(K_{\Xbar^m}+D_{\Xbar^m}+\Delta_{\Xbar^m}+\epsilon'(cA_{\Xbar^m})\right))\\
        &=(\epsilon-\epsilon')C\cdot A_{\Xbar^m},
    \end{align*}
    for all $0<\epsilon<\epsilon'\ll1$. Therefore, $C\cdot(K_{\Xbar^m}+D_{\Xbar^m}+\Delta_{\Xbar^m})=0$, and $C$ is contracted by $\phibar_0$ as well. In particular, $\phibar_0$ factors through a birational contraction
    \[
    \begin{tikzcd}
        \psi\colon \Xbar_+^c\arrow[r] & \Xbar_0^c
    \end{tikzcd}
    \]
    by the Rigidity Lemma~\cite[Lemma 1.15]{Debarre-higher_dimensional_AG}.
    Note that the log canonical model $\phibar_+^c:=\phibar_+\circ\phibar^m\colon \Xbar\dashrightarrow\Xbar_+^c$ does not contract any stratum of the conductor $D_{\Xbar}$. Indeed, otherwise $\phibar^c=\psi\circ\phibar_+^c$ would also contract some stratum of $D_{\Xbar}$, contradicting the assumption that $\phi^{0,c}$ does not contract any stratum of the conductor $D_X$ of $X$.
    
    We claim that $K_{\Xbar_+^c}+D_{\Xbar_+^c}+\Delta_{\Xbar_+^c}$ is semi-ample over $T$.
    To prove this, it is enough to show that $\psi^*(K_{\Xbar_0^c}+D_{\Xbar_0^c}+\Delta_{\Xbar_0^c})\sim_{\QQ,T}K_{\Xbar_+^c}+D_{\Xbar_+^c}+\Delta_{\Xbar_+^c}$. On the other hand,
    \begin{align*}
        \psi^*(K_{\Xbar_0^c}+D_{\Xbar_0^c}+\Delta_{\Xbar_0^c}) & \sim_{\QQ,T} \phi_{+*}\phi_+^*\psi^*(K_{\Xbar_0^c}+D_{\Xbar_0^c}+\Delta_{\Xbar_0^c})\\
        & =\phi_{+*}\phi_0^*(K_{\Xbar_0^c}+D_{\Xbar_0^c}+\Delta_{\Xbar_0^c})\\
        & = \phi_{+*}(K_{\Xbar^m}+D_{\Xbar^m}+\Delta_{\Xbar^m})\\
        & \sim_{\QQ,T} K_{\Xbar_+^c}+D_{\Xbar_+^c}+\Delta_{\Xbar_+^c}
    \end{align*}
    where the first $\QQ$-linear equivalence follows from the projection formula, and the last by definition. This proves that $K_{\Xbar_+^c}+D_{\Xbar_+^c}+\Delta_{\Xbar_+^c}$ is semi-ample over $T$.

    Now, we work at the demi-normal level, and we claim that also $\phi_{\epsilon}^c\colon X\dashrightarrow X_{\epsilon}^c$ is independent of $\epsilon$ as long as $0<\epsilon\ll1$. From what we have proved above, the normalization $\phibar_{+}^c\colon \Xbar\dashrightarrow\Xbar_+^c$ of $\phi_{\epsilon}^c$ is independent of $\epsilon$ in the same range, and it restricts to a birational map between conductors, thus also the rational involution on $D_{\Xbar_+^c}$ is independent of $\epsilon$. Since a demi-normal pair is uniquely determined by its normalization triple (see Remark~\ref{rmk: normalization pair} or \cite[Proposition 5.3]{Kollar-singularities}), $X\dashrightarrow X_{\epsilon}^c$ is constant for $0<\epsilon\ll1$, as wanted. As before, we denote $X_{\epsilon}^c$ by $X_+^c$ for $0<\epsilon\ll1$, and similarly for the relevant divisors.
    
    Recall that we also know
    \[
    \begin{tikzcd}
        \pi_+\colon (\Xbar_+^c,D_{\Xbar_+^c}+(1-\epsilon)\Delta_{\Xbar_+^c}+\epsilon B_{\Xbar_+^c})\arrow[r] & (X_+^c,(1-\epsilon)\Delta_{X_+^c}+\epsilon B_{X_+^c})
    \end{tikzcd}
    \]
    to be the normalization. It follows that
    \[
        \pi_+^*(K_{X_+^c}+\Delta_{X_+^c})\sim_{\QQ,T}K_{\Xbar_+^c}+D_{\Xbar_+^c}+\Delta_{\Xbar_+^c},
    \]
    which is semi-ample. By~\cite[Theorem 1.4]{FG_fin_Brepr}, \cite{FujinoAbundanceSLC}, and~\cite[Remark 1.5]{Hacon_Xu_finBrepr_slc_abundance}, we obtain that $K_{X_+^c}+\Delta_{X_+^c}$ is semi-ample as well. Therefore, the associated $\cO_T$-algebra $R(X_+^c/T,K_{X_+^c}+\Delta_{X_+^c})\simeq R_X$ is of finite type, and $X^c:=\Proj R_X$ with the induced boundary is the good stable model of $(X,\Delta_X)$ by Theorem~\ref{thm: projRX is stable model}. Note that there could be vertical irreducible components of $X$ over $T$, but they have image in $T^0$, where we already know $\Proj R_X$ to be the stable model by assumption.
\end{proof}

\subsection{Proof of Corollaries~\ref{cor: main intro} and~\ref{cor: simple normal crossing case}}\label{subsec:corollaries}

In this subsection we prove Corollary~\ref{cor: main intro} and a special case of it.

\begin{proof}[Proof of Corollary~\ref{cor: main intro}]
    To prove Corollary~\ref{cor: main intro}, it is enough to show that $(\Xbar,D_{\Xbar}+\Delta_{\Xbar})$ admits a log canonical model over $T$, and then apply Theorem~\ref{thm: general version}. First, the same argument as Lemma~\ref{lem: reduction to projective case} allows us to reduce the statement to the case where $T$ is projective. Then, the result follows from~\cite[Theorem 1.1]{Hacon_Xu_dslt}.
\end{proof}

\begin{remark}\label{rmk: reduction projective case for Hacon-Xu result}
    In the above proof, the reduction to $T$ being projective was not strictly necessary in order to apply~\cite[Theorem 1.1]{Hacon_Xu_dslt}. However, as explained in \S\ref{subsec: motivation}, Hacon and Xu use Koll\'ar's gluing to establish a semi-ampleness result~\cite[Proposition 3.1]{Hacon_Xu_dslt}. When the base is projective, this is a consequence of more general results~\cite[Theorem 1.4]{FG_fin_Brepr}, \cite{FujinoAbundanceSLC}.
\end{remark}

As a special case, we obtain Corollary~\ref{cor: simple normal crossing case}.

\begin{proof}[Proof of Corollary~\ref{cor: simple normal crossing case}]
    Note that $(X,\Delta_X)$ is semi-dlt, and it is an slc family~\cite[Definition/Theorem 2.3]{Kollar-moduli}. In particular, all log centers (thus also all lc centers) intersect $X^0:=X|_{T^0}$ by~\cite[Proposition 2.15]{Kollar-moduli}. Moreover, by assumption, the restriction $\phi^0\colon X^0\rightarrow Y^0$ of $\phi$ does not contract any stratum of the conductor $D_{X^0}$, and it is a stable model over $T^0$. Therefore, we can apply Corollary~\ref{cor: main intro} to obtain the stable model $(X^c,\Delta_{X^c})$ over $T$ as in the statement. Now, for all $t\in T$, the pair $(X,\Delta_X+X_t)$ is again slc and $X_t$ is relatively trivial over $T$, hence its stable model is $(X^c,\Delta_{X^c}+(X^c)_t)$ by Theorem~\ref{thm: projRX is stable model}, which is then slc. Again by~\cite[Definition/Theorem 2.3]{Kollar-moduli}, it follows that $(X^c,\Delta_{X^c})$ is a stable slc family over $T$.
\end{proof}

\section{MMP for slc pairs}\label{sec:slc-MMP}

In this section we prove Corollary~\ref{cor: analog Birkar} and outline how to deduce the existence of certain MMP steps for slc pairs without relying on Koll\'ar's gluing technique.

\begin{proof}[Proof of Corollary~\ref{cor: analog Birkar}]
    By~\cite[Theorem 1.1(3)]{Birkar_existenceflips} or~\cite[Theorem 1.6]{Hacon_Xu_dslt}, the normalization pair $(\Xbar,D_{\Xbar}+\Delta_{\Xbar})$ admits both a good minimal model and a log canonical model over $T$. Taking the Stein factorization of $f$, we may further that $f$ is birational, is an isomorphism at the generic point of every log center lying on $D_X$, and satisfies $f_*\cO_X\simeq\cO_T$. Set $X^0=X\setminus\Ex(f)$ and $T^0=T\setminus f(\Ex(f))$, thus $X^0$ intersects all log centers lying on $D_X$. By connectedness of the fibers, we have $X^0=f^{-1}(T^0)\simeq T^0$.
    Therefore, all assumptions of Theorem~\ref{thm: general version} are satisfied, hence $(X,\Delta_X)$ admits a stable model over $T$.
\end{proof}

Let us recall a definition of MMP steps for slc pairs.

\begin{definition}[{\cite[Definition 11]{Ambro-Kollar-slc-MMP}, \cite[Definition 2.6]{Hashizume_sdlt_models}}]\label{def:slc-MMP steps}
    Let $(X,\Delta_X)$ be an slc pair, and let $f\colon X\rightarrow T$ be a projective morphism with $T$ a quasi-projective scheme. Then, an \emph{MMP step} over $T$ for $(X,\Delta_X)$ is a commutative diagram
    \[
    \begin{tikzcd}
        (X,\Delta_X)\arrow[rr,dashed,"\psi"]\arrow[rd,"\varphi"'] & & (Y,\Delta_Y)\arrow[ld,"\varphi'"]\\
        & Z
    \end{tikzcd}
    \]
    over $T$, such that
    \begin{enumerate}
        \item\label{slc_step:1} $(Y,\Delta_Y)$ is an slc pair;
        \item\label{slc_step:2} $\psi$ is birational and $\Ex(\psi)$ does not contain any component of the conductor;
        \item $\Delta_Y=\psi_*\Delta_X$;
        \item $Z$ and $Y$ are quasi-projective schemes, the morphisms $Z\rightarrow T$ and $Y\rightarrow T$ are projective, $\varphi$ and $\varphi'$ are generically finite, and $\varphi'$ has no exceptional divisors;
        \item $-(K_X+\Delta_X)$ and $(K_Y+\Delta_Y)$ are ample over $Z$.
    \end{enumerate}
\end{definition}

\begin{remark}\label{rmk: normalization MMP step}
    By~\cite[Lemma 12]{Ambro-Kollar-slc-MMP}, a diagram as in Definition~\ref{def:slc-MMP steps} satisfying properties~\eqref{slc_step:1} and~\eqref{slc_step:2} is an MMP step over $T$ for $(X,\Delta_X)$ if and only if the diagram obtained by normalization is an MMP step.
\end{remark}

\subsection{MMP for semi-dlt pairs}

We first explain Corollary~\ref{cor: analog Birkar} yields the existence of certain MMP steps for semi-dlt pairs.

The Cone Theorem is known for all slc pairs by Fujino~\cite[Theorem 1.19(3)]{fujino_theorems_slc}. However, in general, flips need not exist~\cite[Examples 4, 5]{Ambro-Kollar-slc-MMP}, and after a divisorial contraction the resulting pair need not have $\QQ$-Cartier log canonical divisor, even though its pullback to the normalization does have the property, see~\cite[Example 5.4]{fujino_theorems_slc}. We show that these pathologies do not occur when the exceptional locus of the contraction contains no log center lying on the conductor, in which case we can construct an MMP step. This gives a direct alternative to the corresponding constructions obtained using Koll\'ar's gluing theory in~\cite[Theorems 7 and 9]{Ambro-Kollar-slc-MMP} and~\cite[Theorem 5.38]{Kollar-singularities}; see also~\cite[Lemmas 3.1, 3.4]{Hashizume_sdlt_models}.

\begin{corollary}\label{cor: existence semidlt MMP}
    Let $(X,\Delta_X)$ be a semi-dlt pair, and $f\colon X\rightarrow T$ a projective morphism to a semi-normal quasi-projective variety $T$. Let $F\subset\overline{NE}(X/T)$ be a $(K_X+\Delta_X)$-negative extremal ray, and $\varphi=\varphi_F\colon X\rightarrow Z$ the corresponding contraction. Suppose that $\varphi$ is birational and $\Ex(\varphi)\cap D_X$ contains no log center. Then, $Z$ is demi-normal and there exists an MMP step
    \[
    \begin{tikzcd}
        (X,\Delta_X)\arrow[rr,dashed,"\psi"]\arrow[rd,"\varphi"'] & & (Y,\Delta_Y)\arrow[ld,"\varphi'"]\\
        & Z
    \end{tikzcd}
    \]
    over $T$. Moreover, $(Y,\Delta_Y)$ is semi-dlt.
\end{corollary}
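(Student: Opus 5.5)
The plan is to reduce Corollary~\ref{cor: existence semidlt MMP} to Corollary~\ref{cor: analog Birkar}, applied to the morphism $\varphi\colon X\rightarrow Z$.

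\emph{Step 1: $Z$ is demi-normal and $\varphi$ is as needed.} Since $\varphi$ is an extremal contraction, $\varphi_*\cO_X\simeq\cO_Z$. Because $\Ex(\varphi)\cap D_X$ contains no log center, and the components of $D_X$ are log centers, $\varphi$ is an isomorphism at the generic points of $D_X$. In particular $Z$ is nodal in codimension~1 away from a closed subset of codimension $\geq2$. Example~\ref{exm: deminormal modifications}\eqref{exm: deminormal stein factorization} then shows that $Z$ is demi-normal.

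\emph{Step 2: set up Corollary~\ref{cor: analog Birkar} over $Z$.} Choose an ample divisor $H_Z$ on $Z$ over $T$, and a general effective $\QQ$-divisor $\Delta'_X\sim_{\QQ}-(K_X+\Delta_X)+\varphi^*H_Z$. This divisor is ample over $T$ once $H_Z$ is sufficiently positive, because $-(K_X+\Delta_X)$ is $\varphi$-ample. Using a Bertini argument as in Lemma~\ref{lem: improving boundary by perturbation}, I take $\Delta'_X$ general with small coefficients. Then $(X,\Delta_X+\Delta'_X)$ is slc, and $\Supp\Delta'_X$ avoids the finitely many generic points of log centers. This gives $K_X+\Delta_X+\Delta'_X\sim_{\QQ,Z}0$. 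The hypothesis on $\Ex(\varphi)$ says that $\varphi$ is an isomorphism, hence finite, at the generic point of every log center lying on $D_X$.

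Corollary~\ref{cor: analog Birkar} needs projective schemes, while here everything is quasi-projective over $T$. I would handle this by compactifying, as in Lemma~\ref{lem: reduction to projective case}, or by noting that the proof of Corollary~\ref{cor: analog Birkar} only uses Theorem~\ref{thm: general version}, which is local over the base $Z$. With that, $R(X/Z,K_X+\Delta_X)$ is finitely generated, and $\psi\colon X\dashrightarrow Y:=\Proj_Z R(X/Z,K_X+\Delta_X)$ is the good stable model of $(X,\Delta_X)$ over $Z$.

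\emph{Step 3: check Definition~\ref{def:slc-MMP steps}.} From Definition~\ref{def: stable model slc pairs}, the pair $(Y,\Delta_Y)$ is slc, $K_Y+\Delta_Y$ is ample over $Z$, $\Delta_Y=\psi_*\Delta_X$, and $\psi$ is a birational contraction whose exceptional locus contains no component of the conductor. The morphism $\varphi'\colon Y\rightarrow Z$ is projective and birational. It has no exceptional divisors: on the normalization, $\Ybar$ is the log canonical model over $\Zbar$, obtained as a flip or as the divisorial contraction itself, and Remark~\ref{rmk: normalization MMP step} transfers this back to $(Y,\Delta_Y)$. The condition that $-(K_X+\Delta_X)$ is ample over $Z$ holds by construction of $\varphi$.

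\emph{Step 4: $(Y,\Delta_Y)$ is semi-dlt.} I expect this to be the main obstacle. The normalization $\Ybar$ is dlt, being an MMP step of a dlt pair. What remains is the stricter semi-SNC condition of Definition~\ref{def: dlt/sdlt/sklt}, together with the normality of the components of $Y$. The approach I would take is to use that $\psi$ is an isomorphism near the generic points of all log centers on $D_X$, since these are not contained in $\Ex(\varphi)$. Then every log center of $Y$ that lies on $D_Y$ has a neighbourhood isomorphic to one on $X$, where $(X,\Delta_X)$ is semi-SNC. Away from such centers, discrepancies are $>-1$ by the inequality of Lemma~\ref{lem: redundant def stable model}. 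Together these give the semi-dlt property.
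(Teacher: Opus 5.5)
\textbf{Steps 1--3.} These are essentially the paper's proof. The paper also writes $-(K_X+\Delta_X)\sim_{\QQ,Z}A$ with $A$ $\varphi$-ample and $(X,\Delta_X+A)$ slc, applies Corollary~\ref{cor: analog Birkar} over $Z$, and transfers the MMP-step property from the normalization via Remark~\ref{rmk: normalization MMP step}. Your remark on the projectivity hypothesis there addresses a point the paper passes over.

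\textbf{The gap is in Step~4.} The inequality \eqref{eq: inequality discrepancies} only gives $a(E,Y,\Delta_Y)\geq a(E,X,\Delta_X)\geq -1$, never $>-1$. The hypothesis only forbids log centers contained in $\Ex(\varphi)\cap D_X$, so lc centers of $(X,\Delta_X)$ off the conductor (for instance strata of $\lfloor\Delta_X\rfloor$) may lie in $\Ex(\varphi)$. Take a divisor $E$ with $a(E,X,\Delta_X)=-1$ and such a center. Nothing in your argument excludes $a(E,Y,\Delta_Y)=-1$ with $\mathrm{center}_Y E\subset\Ex(\psi^{-1})$. That would be a new lc center in the flipped locus, where you have no information that $Y$ is semi-SNC.

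For the same reason, the sentence ``every log center of $Y$ that lies on $D_Y$ has a neighbourhood isomorphic to one on $X$'' does not follow from $\psi$ being an isomorphism at the log centers of $X$ on $D_X$. That is the wrong direction: you first need that centers on $Y$ are not contained in $\Ex(\psi^{-1})$. The claim should also be about lc centers, since a semi-dlt $X$ is only guaranteed to be semi-SNC at generic points of lc centers, not of all log centers. Knowing merely that $\Ybar$ is dlt does not fill the gap, because semi-dlt is strictly stronger than having dlt normalization (Remark~\ref{rmk: basics semidlt}).

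\textbf{The missing ingredient} is the strict form of the discrepancy comparison.
\begin{enumerate}
\item Take a common resolution $p\colon W\to\Xbar$, $q\colon W\to\Ybar$, and let $h\colon W\to\Zbar$ be the induced map. Set $G=p^*(K_{\Xbar}+D_{\Xbar}+\Delta_{\Xbar})-q^*(K_{\Ybar}+D_{\Ybar}+\Delta_{\Ybar})$.
\item $G$ is $h$-exceptional, and $-G$ is $h$-nef, since $-(K_{\Xbar}+D_{\Xbar}+\Delta_{\Xbar})$ is $\overline{\varphi}$-ample and $K_{\Ybar}+D_{\Ybar}+\Delta_{\Ybar}$ is ample over $\Zbar$.
\item The negativity lemma then gives $G\geq 0$ and $h^{-1}(z)\subset\Supp G$ for every $z$ over which $\overline{\varphi}$ is not an isomorphism. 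Hence $a(E,X,\Delta_X)<a(E,Y,\Delta_Y)$ whenever $\mathrm{center}_Z E\subset\varphi(\Ex(\varphi))$; cf.\ \cite[Lemma 3.38]{Kollar_Mori_1998}.
\end{enumerate}
Consequently every lc center of $(Y,\Delta_Y)$ has generic point outside $\Ex(\psi^{-1})$. So it is the isomorphic transform of an lc center of $(X,\Delta_X)$, and that center lies on $D_X$ exactly when the former lies on $D_Y$. Your hypothesis-based argument then applies: $X$ is semi-SNC there, hence so is $Y$. Let $Z_X$ be the closed set of Definition~\ref{def: dlt/sdlt/sklt} for $X$; then $\Ex(\psi^{-1})\cup\overline{\psi(Z_X\setminus\Ex(\psi))}$ serves as the corresponding closed set for $Y$.

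This is exactly what the paper's one-line reduction tacitly uses, via the classical theory of MMP steps for dlt pairs. That reduction reads: ``it is enough to show that $\psi$ is an isomorphism at the generic point of every log canonical center of $(X,\Delta_X)$ lying on $D_X$''.
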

\begin{proof}
    Note that $\varphi$ does not contract any component of the conductor. Since $\varphi_*\cO_X\simeq\cO_Z$ by~\cite[Theorem 1.19(3)]{fujino_theorems_slc}, we have that $Z$ is demi-normal by Example~\ref{exm: deminormal modifications}\eqref{exm: deminormal stein factorization}. By~\cite[Theorems 1.18, 1.19]{fujino_theorems_slc}, $Z$ is projective over $T$ and $-(K_X+\Delta_X)\sim_{\QQ,Z}A$ for some $\varphi$-ample $\QQ$-Cartier $\QQ$-divisor $A$. By ampleness, we can choose $A$ so that $(X,\Delta_X+A)$ is slc. By Corollary~\ref{cor: analog Birkar}, there exists a stable model $\psi\colon (X,\Delta_X)\dashrightarrow(Y,\Delta_Y)$ over $Z$ satisfying Definition~\ref{def:slc-MMP steps}\eqref{slc_step:1}-\eqref{slc_step:2}, and whose normalization $\psibar$ is the log canonical model of $(\Xbar,D_{\Xbar}+\Delta_{\Xbar})$ over $Z$. Moreover, we know from the classical theory that $\psibar$ is an MMP step, hence the same holds for $\psi$ by Remark~\ref{rmk: normalization MMP step}. The fact that $(Y,\Delta_Y)$ is semi-dlt is proved in~\cite[Lemma 3.4]{Hashizume_sdlt_models} in a more general setting, but in the present case the argument is simpler. Since $(Y,\Delta_Y)$ is dlt away from the conductor, it is enough to show that $\psi$ is an isomorphism at the generic point of every log canonical center of $(X,\Delta_X)$ lying on $D_X$. This follows from the stronger assumption that $\Ex(\varphi)\cap D_X$ contains no log center.
\end{proof}

\begin{remark}\label{rmk: semidlt divisorial contraction}
    With the assumptions of Corollary~\ref{cor: existence semidlt MMP}, suppose further that the restriction of the normalization $\overline{\varphi}\colon \Xbar\rightarrow\Zbar$ to every connected component of $\Xbar$ is a divisorial contraction. Then, $\overline{\varphi}'\colon \Ybar\rightarrow\Zbar$ is an isomorphism. Since $\varphi$ is an isomorphism at all the generic points of the conductor, $\varphi'\colon Y\rightarrow Z$ is an isomorphism in codimension 1. Therefore, $\varphi'$ is an isomorphism by Remark~\ref{rmk: normalization pair}, hence $(Z,\varphi_*\Delta_X)$ is semi-dlt and $\varphi$ the MMP step.
\end{remark}

Corollary~\ref{cor: existence semidlt MMP} allows us to run some steps of the MMP for semi-dlt pairs, under the assumption that the relative stable locus of $K_X+\Delta_X$ over $T$ contains no log center lying on the conductor. This recovers, in the semi-dlt setting, the corresponding results of~\cite[Lemma 3.1]{Hashizume_sdlt_models} without invoking Koll\'ar's gluing theory. In particular, the same argument as in Hashizume's paper gives crepant semi-dlt modifications as in~\cite[Theorem 1.2]{Hashizume_sdlt_models}.
Thus,~\cite[Theorem 1.2]{Hashizume_sdlt_models} is available to us in our study of the MMP for slc pairs without gluing along finite equivalence relations.

\subsection{MMP for slc pairs}

The existence of semi-dlt modifications allows us to reduce many questions about slc pairs to the semi-dlt case, similarly to dlt-modifications in the normal setting. Although standard, we describe how to extend Corollary~\ref{cor: existence semidlt MMP} to slc pairs for the sake of completeness.

\begin{corollary}\label{cor: existence slc MMP}
    Let $(X,\Delta_X)$ be an slc pair, and $f\colon X\rightarrow T$ a projective morphism to a semi-normal quasi-projective variety $T$. Let $F\subset\overline{NE}(X/T)$ be a $(K_X+\Delta_X)$-negative extremal ray, and $\varphi=\varphi_F\colon X\rightarrow Z$ the corresponding contraction. Suppose that $\varphi$ is birational and $\Ex(\varphi)\cap D_X$ contains no log center. Then, $Z$ is demi-normal and there exists an MMP step over $T$.
\end{corollary}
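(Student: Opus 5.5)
We reduce to the semi-dlt case, Corollary~\ref{cor: existence semidlt MMP}, by passing to a crepant semi-dlt modification and running the needed steps over $Z$.

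\textbf{Step 1: reduction to normal components.} If the irreducible components of $X$ are not normal in codimension 1, we take the Galois double cover $\tau\colon X'\to X$ of Remark~\ref{rmk: self-intersection case}. It is étale in codimension 1, so $(X',\tau^*\Delta_X)$ is slc and crepant over $X$. The ray $F$ pulls back to a $\ZZ/2$-invariant face, the contraction lifts to a $\ZZ/2$-equivariant contraction $X'\to Z'$, and $Z=Z'/(\ZZ/2)$. Log centers of $X'$ map onto log centers of $X$, so the hypothesis on $\Ex(\varphi)\cap D_X$ persists. An MMP step for $X'$ over $Z'$ that is canonical (being a Proj of a log canonical algebra, by Proposition~\ref{prop: Y=proj RX good stable model}) is $\ZZ/2$-equivariant. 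It therefore descends by taking invariants of the log canonical algebras, since quotients by finite groups preserve slc and ampleness. So we may assume the components are normal in codimension 1.

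\textbf{Step 2: semi-dlt modification and MMP over $Z$.} Next we take a crepant semi-dlt modification $g\colon (W,\Delta_W)\to(X,\Delta_X)$ as in~\cite[Theorem 1.2]{Hashizume_sdlt_models}, which the discussion after Remark~\ref{rmk: semidlt divisorial contraction} makes available without gluing. By choosing it to be an isomorphism over the locus where $(X,\Delta_X)$ is already semi-dlt, we may assume $g$ is an isomorphism at the generic points of the strata of $D_X$ where possible, and in any case that the log centers of $W$ on $D_W$ map to log centers of $X$ on $D_X$. Since $K_W+\Delta_W=g^*(K_X+\Delta_X)$ and the latter is $\varphi$-anti-ample, we want the stable model of $(W,\Delta_W)$ over $Z$. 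Here we apply Corollary~\ref{cor: analog Birkar} to $\varphi\circ g\colon W\to Z$, which is generically finite. We take $\Delta'_W=g^*A$ with $A\sim_{\QQ,Z}-(K_X+\Delta_X)$ a general $\varphi$-ample divisor, so that $(W,\Delta_W+\Delta'_W)$ is slc and $K_W+\Delta_W+\Delta'_W\sim_{\QQ,Z}0$. The hypothesis that $\varphi\circ g$ is finite at the generic point of every log center on $D_W$ holds because such centers map to log centers on $D_X$. These are not in $\Ex(\varphi)$, so $\varphi$ is an isomorphism there, and $g$ is finite (indeed an isomorphism) at their generic points.

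\textbf{Step 3: the stable model and its properties.} Corollary~\ref{cor: analog Birkar} then gives $R(W/Z,K_W+\Delta_W)=R(X/Z,K_X+\Delta_X)$, finitely generated, with $Y:=\Proj$ of it a good stable model over $Z$. By Theorem~\ref{thm: projRX is stable model}, $(Y,\Delta_Y)$ is slc with $K_Y+\Delta_Y$ ample over $Z$, and $\Delta_Y=\psi_*\Delta_X$. The conductor is not contracted, since $\varphi$ is an isomorphism at generic points of $D_X$. That $Z$ is demi-normal follows as in Corollary~\ref{cor: existence semidlt MMP}, using $\varphi_*\cO_X=\cO_Z$ and Example~\ref{exm: deminormal modifications}\eqref{exm: deminormal stein factorization}. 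The remaining conditions of Definition~\ref{def:slc-MMP steps} are checked on normalizations via Remark~\ref{rmk: normalization MMP step}, where they are classical.

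The main obstacle is Step 2. The hypothesis of Corollary~\ref{cor: analog Birkar} must hold on $W$, which requires controlling the new log centers of the semi-dlt modification on $D_W$ and ensuring that none lies over $\Ex(\varphi)$. I would first try to prove that every log center of $(W,\Delta_W)$ on $D_W$ maps onto a log center of $(X,\Delta_X)$ on $D_X$, using crepancy and the definition of log centers via normalizations. If that fails, I would instead choose the modification to be an isomorphism over $X\setminus\Ex(\varphi)$ near $D_X$.
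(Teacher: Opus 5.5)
\textbf{There is a genuine gap in Step 2.} Corollary~\ref{cor: analog Birkar} needs $\varphi\circ g$ to be finite at the generic point of every log center lying on $D_W$. Your justification, that $g$ is finite (indeed an isomorphism) there, is false in general.

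Hashizume's modification only guarantees that $g$ contracts no \emph{stratum of $D_W$}. It must, however, extract divisors of discrepancy $-1$ over the lc centers of $(X,\Delta_X)$ on $D_X$ where $X$ fails to be semi-dlt, and such centers may lie outside $\Ex(\varphi)$. Their intersections with $D_W$ are log centers on $D_W$ that $g$ contracts.

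For example, suppose that near a point $p\in D_X\setminus\Ex(\varphi)$ the scheme $X$ is two copies of $\AA^3$ glued along the cone over a plane cubic $C$. The vertex is an lc center, since the blowup has discrepancy $2-3=-1$. Then $g$ blows up the vertex on each component, and $\Delta_W=E_1+E_2$ with $E_i\cong\PP^2$. The curve $C\cong E_i\cap D_W$ is an lc center lying on $D_W$, and $g$ maps it to $p$. So $\varphi\circ g$ is not finite there, and Corollary~\ref{cor: analog Birkar} does not apply.

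Neither fallback repairs this. The first (log centers on $D_W$ map onto log centers on $D_X$) is true by crepancy but says nothing about finiteness. The second (take $g$ to be an isomorphism over $X\setminus\Ex(\varphi)$ near $D_X$) is impossible unless $(X,\Delta_X)$ is already semi-dlt there.

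\textbf{The repair, which is what the paper does,} is to apply Theorem~\ref{thm: general version} directly to $(W,\Delta_W)\to Z$ with $Z^0:=Z\setminus\varphi(\Ex(\varphi))$. Corollary~\ref{cor: analog Birkar} effectively forces $T^0$ to be the complement of the image of $\Ex(\varphi\circ g)$, which is too small. The hypotheses of Theorem~\ref{thm: general version} are verified as follows.
\begin{enumerate}
\item Since $U:=X\setminus\Ex(\varphi)=\varphi^{-1}(Z^0)\cong Z^0$, the crepant morphism $g^{-1}(U)\to U$ is the good stable model of $W^0:=g^{-1}(U)$ over $Z^0$. Its exceptional locus contains no stratum of the conductor, by exactly Hashizume's property.
\item The theorem only requires $W^0$ to \emph{meet} every log center on $D_W$. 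This is precisely your crepancy argument: a log center on $D_W$ contained in $g^{-1}(\Ex(\varphi))$ would map to a log center in $\Ex(\varphi)\cap D_X$.
\item The normalization admits an lc model over $Z$, because $R_{\overline{W}}\simeq R_{\Xbar}$ is finitely generated by the classical theory.
\end{enumerate}
The point is that Theorem~\ref{thm: general version} decouples the condition on the exceptional locus (strata of $D$ only) from the condition on log centers (meeting $X^0$ only). That decoupling is what survives a semi-dlt modification.

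Apart from this, your Steps 1 and 3 are in line with the paper's argument.
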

\begin{proof}
    Since $\varphi$ does not contract any component of the conductor and $\varphi_*\cO_X\simeq\cO_Z$ by~\cite[Theorem 1.19(3)]{fujino_theorems_slc}, we have that $Z$ is demi-normal by Example~\ref{exm: deminormal modifications}\eqref{exm: deminormal stein factorization}. By uniqueness of stable models together with~\cite[2.41 and Corollary 2.43(2)]{Kollar-singularities}, it is enough to prove the statement after taking the Galois double cover of $X$ as in Remark~\ref{rmk: self-intersection case} and~\cite[5.23]{Kollar-singularities}; in particular, we can assume the irreducible components of $X$ to be normal in codimension 1. By~\cite[Theorem 1.2]{Hashizume_sdlt_models}, there exists a crepant semi-dlt modification $\gamma\colon (X',\Delta_{X'})\rightarrow(X,\Delta_X)$ such that $\gamma$ does not contract any stratum of $D_{X'}$. Let $U=X\setminus\Ex(\varphi)$, $U'=\gamma^{-1}(U)$, and $V=Z\setminus\varphi(\Ex(\varphi))$. By connectedness of the fibers, we have $U=\varphi^{-1}(V)$. In particular, $(X',\Delta_{X'})|_V=(U',\Delta_{U'})$ admits a stable model over $V$, which is isomorphic to $V$ itself, and $U'\rightarrow V$ does not contract any stratum of the conductor of $U'$. Moreover, $R_{X'}\simeq R_X$, hence it is enough to show that $R_{X'}$ is finitely generated. Note that $R_{\Xbar'}\simeq R_{\Xbar}$ is a finitely generated $\cO_Z$-algebra, and the induced birational map $\Xbar\dashrightarrow\Proj R_{\Xbar}$ is the MMP step associated to the contraction $\Xbar\rightarrow\Zbar$. Moreover, $(X'\setminus U')\cap D_{X'}$ contains no log center. Indeed, let $W\subset(X'\setminus U')\cap D_{X'}$ be a log center, then $\gamma(W)\subset(X\setminus U)\cap D_X=\Ex(\varphi)\cap D_X$ would also be a log center as $\gamma$ is crepant, which would contradict the assumption. Then, $R_{X'}\simeq R_X$ is finitely generated by Theorem~\ref{thm: general version}, and the stable model $X\dashrightarrow\Proj R_X$ over $Z$ is an MMP step. 
\end{proof}

\bibliographystyle{amsalpha}
\bibliography{library}

\end{document}